\providecommand{\U}[1]{\protect\rule{.1in}{.1in}}
\providecommand{\U}[1]{\protect\rule{.1in}{.1in}}
\providecommand{\U}[1]{\protect\rule{.1in}{.1in}}
\providecommand{\U}[1]{\protect\rule{.1in}{.1in}}
\providecommand{\U}[1]{\protect\rule{.1in}{.1in}}
\providecommand{\U}[1]{\protect\rule{.1in}{.1in}}
\providecommand{\U}[1]{\protect\rule{.1in}{.1in}}
\providecommand{\U}[1]{\protect\rule{.1in}{.1in}}
\providecommand{\U}[1]{\protect\rule{.1in}{.1in}}
\providecommand{\U}[1]{\protect\rule{.1in}{.1in}}
\providecommand{\U}[1]{\protect\rule{.1in}{.1in}}
\providecommand{\U}[1]{\protect\rule{.1in}{.1in}}
\providecommand{\U}[1]{\protect\rule{.1in}{.1in}}
\providecommand{\U}[1]{\protect\rule{.1in}{.1in}}
\providecommand{\U}[1]{\protect\rule{.1in}{.1in}}
\providecommand{\U}[1]{\protect\rule{.1in}{.1in}}
\providecommand{\U}[1]{\protect\rule{.1in}{.1in}}
\providecommand{\U}[1]{\protect\rule{.1in}{.1in}}
\providecommand{\U}[1]{\protect\rule{.1in}{.1in}}
\providecommand{\U}[1]{\protect\rule{.1in}{.1in}}
\providecommand{\U}[1]{\protect\rule{.1in}{.1in}}
\providecommand{\U}[1]{\protect\rule{.1in}{.1in}}
\providecommand{\U}[1]{\protect\rule{.1in}{.1in}}
\providecommand{\U}[1]{\protect\rule{.1in}{.1in}}
\providecommand{\U}[1]{\protect\rule{.1in}{.1in}}
\providecommand{\U}[1]{\protect\rule{.1in}{.1in}}
\providecommand{\U}[1]{\protect\rule{.1in}{.1in}}
\providecommand{\U}[1]{\protect\rule{.1in}{.1in}}
\providecommand{\U}[1]{\protect\rule{.1in}{.1in}}
\providecommand{\U}[1]{\protect\rule{.1in}{.1in}}
\providecommand{\U}[1]{\protect\rule{.1in}{.1in}}
\providecommand{\U}[1]{\protect\rule{.1in}{.1in}}
\providecommand{\U}[1]{\protect\rule{.1in}{.1in}}
\providecommand{\U}[1]{\protect\rule{.1in}{.1in}}
\providecommand{\U}[1]{\protect\rule{.1in}{.1in}}
\providecommand{\U}[1]{\protect\rule{.1in}{.1in}}
\providecommand{\U}[1]{\protect\rule{.1in}{.1in}}
\providecommand{\U}[1]{\protect\rule{.1in}{.1in}}
\newtheorem{theorem}{Theorem}
{}
\newtheorem{conclusion}{Conclusion}
\newtheorem{conjecture}{Conjecture}
\newtheorem{corollary}{Corollary}
\newtheorem{definition}{Definition}
\newtheorem{notation}{Notation}
\newtheorem{proposition}{Proposition}
\newtheorem{remark}{Remark}
\newtheorem{summary}{Summary}
\newenvironment{proof}[1][Proof]{\textbf{#1.} }{\ \rule{0.5em}{0.5em}}
\begin{document}

\title{Spectral Analysis of the Schrodinger Operator with an Optical Potential}
\author{O. A. Veliev\\{\small Dogus University, Ac\i badem, Kadik\"{o}y, \ }\\{\small Istanbul, Turkey,}\ {\small e-mail: oveliev@dogus.edu.tr}}
\date{}
\maketitle

\begin{abstract}
In this paper we g\i ve a complete description of the spectral analysis of the
Schrodinger operator $L(q)$ with the potential $q(x)=4\cos^{2}x+4iV\sin2x$ for
all $V>1/2.$ First we consider the Bolch eigenvalues and spectrum of $L(q).$
\ Then using it we investigate spectral singularities and essential spectral
singularities (ESS). We prove that the operator $L(q)$ has no ESS and has ESS
respectively if and only if $V\neq V_{k}$ and $V=V_{k}$ for $k\geq2,$ where
$V_{k}\rightarrow\infty$ as $k\rightarrow\infty$, \ $V_{2}$ is the second
critical point and $V_{2}<V_{3}<\cdot\cdot\cdot$. Using it we classify the
spectral expansion in term of the critical points $V_{k}$ for $k\geq2$.
Finally we discuss the critical points, formulate some conjectures and
describe the changes of the spectrum of $L(q)$ when $V$ changes fron $1/2$ to
$\infty.$

Key Words: Optical potentials, spectrum, spectral singularities, spectral expansion.

AMS Mathematics Subject Classification: 34L05, 34L20.

\end{abstract}

\section{Introduction}

In this paper we consider in detail the spectrum, spectral singularities, ESS
and spectral expansion of the Schr\"{o}dinger operator
\begin{equation}
L(q)=-\frac{d^{2}}{dx^{2}}+q
\end{equation}
with the optical potential%
\begin{equation}
q(x)=(1+2V)e^{i2x}+(1-2V)e^{-i2x}%
\end{equation}
for all $V>1/2.$ The operator $L(q)$ is defined in $W_{2}^{2}(-\infty,\infty)$
by $L(q)y=-y^{^{\prime\prime}}+qy$ and is a densely defined closed operator in
$L_{2}(-\infty,\infty).$ It is well-known that [1, 8, 11] the spectrum
$\sigma(L(q))$ of $L(q)$ is the union of the spectra $\sigma(L_{t}(q))$ of the
operators $L_{t}(q)$ for $t\in(-1,1]$ generated in $L_{2}[0,\pi]$ by the
expression $-y^{^{\prime\prime}}+qy$ and the boundary conditions
\begin{equation}
y(\pi)=e^{i\pi t}y(0),\text{ }y^{^{\prime}}(\pi)=e^{i\pi t}y^{^{\prime}}(0).
\end{equation}
The spectrum of $L_{t}(q)$ consists of the eigenvalues $\lambda_{1}(t),$
$\lambda_{2}(t),...,$ called as the Bloch eigenvalues of $L(q),$ that are the
roots of the characteristic equation
\begin{equation}
F(\lambda)=2\cos\pi t,
\end{equation}
where $F(\lambda):=\varphi^{\prime}(\pi,\lambda)+\theta(\pi,\lambda)$ is the
Hill discriminant, $\theta$ and $\varphi$ are the solutions of
\begin{equation}
-y^{^{\prime\prime}}(x)+q(x)y(x)=\lambda y(x)
\end{equation}
satisfying the initial conditions $\theta(0,\lambda)=\varphi^{\prime
}(0,\lambda)=1,$ $\theta^{\prime}(0,\lambda)=\varphi(0,\lambda)=0.$ Moreover,
in [19] we proved that the eigenvalues $\lambda_{n}(t)$ of $L_{t}(q)$ can be
numbered (counting the multiplicity) by elements of $\mathbb{Z}$ and hence by
elements of $\mathbb{N}$ such that, for each $n$ the function $\lambda_{n}$ is
continuous on $(-1,1]$ and $\lambda_{n}(t)$ uniformly with respect to
$t\in(-1,1]$ approaches $\infty$ as $n\rightarrow\infty.$ Thus the spectrum of
$L(q)$ is the union of the continuous curves
\begin{equation}
\Gamma_{n}=\{\lambda_{n}(t):t\in(-1,1]\}
\end{equation}
for $n\in\mathbb{N}$ called as the $n$-th bands of the spectrum. By (3) and
(4), $\lambda_{n}(-t)=\lambda_{n}(t)$ and the end points of $\Gamma_{n}$ are
$\lambda_{n}(0)$ and $\lambda_{n}(1)$.

Some physically interesting results have been obtained by considering the
potential (2). \ For the first time, the mathematical explanation of the
nonreality of $\sigma(L(q))$ for $V>0.5$ and finding the streshold $0.5$
(first critical point $V_{1}$) was done by Makris et al [6, 7]. Moreover, for
$V=0,85$ they sketch the real and imaginary parts of the first two bands by
using the numerical methods. In [9] Midya et al reduce the operator $L(q)$
with potential (2) to the Mathieu operator and using the tabular values
establish that there is second critical point $V_{2}\sim0.888437$ after which
no parts of the first and second bands remain real.

In [15] we proved that if $ab=\widetilde{a}\widetilde{b}$, where
$a,b,\widetilde{a},$ and $\widetilde{b}$ are arbitrary complex numbers, then
the operators $L(q)$ and $L(\widetilde{q})$ with potentials
\begin{equation}
q(x)=ae^{-i2x}+be^{i2x}%
\end{equation}
and $\widetilde{q}(x)=\widetilde{a}e^{-i2x}+\widetilde{b}e^{i2x}$ have the
same Hill discriminants $F(\lambda)$ and $G(\lambda),$ Bloch eigenvalues and
spectra. Thus for $q(x)=(1+2V)e^{i2x}+(1-2V)e^{-i2x}$ and $p(x)=2ic\cos2x$ we
have
\begin{equation}
F(\lambda)=G(\lambda),\text{ }\sigma(L(V))=\sigma(H(c)),\text{ }\sigma
(L_{t}(V))=\sigma(H_{t}(c))
\end{equation}
for $t\in\lbrack0,1]$ and $c=\sqrt{4V^{2}-1},$ where for brevity of the
notations, the operators $L_{t}(q),$ $L(q),$ $L_{t}(p)$ and $L(p)$ are denoted
by $L_{t}(V),$ $L(V)$, $H_{t}(c)$ and $H(c)$ respectively. If $V<1/2$ then
$ic$ is a real number and $H(c)$ is the well-known self-adjoint Mathieu
operator. The case $V=1/2$ is also well-known (see [2, 5, 18] and references
therein). Thus we need to consider the operator $L(V)$ in the case $V>1/2$
which is the consideration of the operator $H(c)$ with pure imaginary
potential $2ic\cos2x$ for $c>0$ due to (8).

In the paper [22] we have gave a complete description, provided with a
mathematical proof, of the shape of $\sigma(L(V))$, when $V$ changes from
$1/2$ to $\sqrt{5}/2,$ that is, $c\in(0,2).$ We proved that the second
critical point $V_{2}$ is a number between $0.8884370025$ and $0.8884370117$.
Moreover, it was proven that $V_{2}$ is the unique degeneration point for the
first periodic eigenvalue, in the sense that the first periodic eigenvalue of
the potential (2) is simple for all $V\in(1/2,\sqrt{5}/2)\backslash\left\{
V_{2}\right\}  $ and is double for $V=V_{2}.$ Besides, we gave an approach for
finding the arbitrary close values of the second critical point $V_{2}.$

Let us briefly and visually explain those results of [22] about the spectrum
of $L(V),$ the eigenvalues of $L_{0}(V)$ (periodic eigenvalues $\lambda
_{1}(0)$, $\lambda_{2}(0)$, ...) and the eigenvalues of $L_{1}(V)$
(antiperiodic eigenvalues $\lambda_{1}(1)$, $\lambda_{2}(1),...$).
Antiperiodic eigenvalues are nonreal and simple for all $V\in(1/2,\sqrt{5}%
/2)$. Moreover, they are pairwise conjugate numbers. To describe the periodic
eigenvalues and spectrum of $L(V)$ we consider the following cases. \ 

\textbf{Case 1:} $1/2<V<V_{2},$ where $V_{2}$ is the second critical point.
Then all periodic eigenvalues are real and simple. The bands $\Gamma_{2k-1}$
and $\Gamma_{2k}$ are joined and they form together the connected subset
$\Omega_{k}=:\Gamma_{2k-1}\cup\Gamma_{2k}$ of the spectrum. The spectrum
$\sigma(L(V))$ consists of the pairwise disjoint sets $\Omega_{1},$
$\Omega_{2},...$called as the components of the spectrum. Moreover, the
$(2k-1)$-th and $2k$-th bands $\Gamma_{2k-1}$ and $\Gamma_{2k}$ become to be
connected by their interior point $\lambda_{2k-1}(t_{k})=\lambda_{2k}(t_{k}),$
where $t_{k}\in(0,\pi)$, which is the unique multiple and double Bloch
eigenvalue lying in $\Omega_{k}.$ The real part of $\Gamma_{2k-1}\cup
\Gamma_{2k}$ is the closed interval $I_{k}=:\left[  \lambda_{2k-1}%
(0),\lambda_{2k}(0)\right]  .$ The subintervals $\left[  \lambda
_{2k-1}(0),\lambda_{2k-1}(t_{k})\right]  $ and $\left[  \lambda_{2k-1}%
(t_{k}),\lambda_{2k}(0)\right]  $ of $I_{k}$ are the real parts of
$\Gamma_{2k-1}$ and $\Gamma_{2k}$ respectively. The other parts $\left\{
\lambda_{2k-1}(t):t\in(t_{k},1]\right\}  $ and $\left\{  \lambda_{2k}%
(t):t\in(t_{k},1]\right\}  $ of the bands $\Gamma_{2k-1}$ and $\Gamma_{2k}$
are \ nonreal and symmetric with respect to the real line.

\textbf{Case 2:} $V=V_{2}.$ If $V$ approaches $V_{2}$ from the left then the
first and second periodic eigenvalues get close to each other and the length
of the interval $I_{1}$ approaches zero. As a result we get the equality
$\lambda_{1}(0)=\lambda_{2}(0)$ which means that the first and second bands
$\Gamma_{1}$ and $\Gamma_{2}$ have only one real point which is their common
point. Moreover, it shows that if $V=V_{2}$, then the first and second
periodic eigenvalues coincide and the first periodic eigenvalue becomes double
eigenvalue. Hence the second critical point $V_{2}$ is a degeneration point
for the first periodic eigenvalue. Thus, in the case $V=V_{2}$ the real part
of $\Gamma_{1}$ as well as $\Gamma_{2}$ is a point $\lambda_{1}(0).$

\textbf{Case 3:} $V_{2}<V<\sqrt{5}/2$ . If $V$ moves to the right of $V_{2},$
then the eigenvalues $\lambda_{1}(0)$ and $\lambda_{2}(0)$ get off the real
line and hence $I_{1}$ becomes the empty set, while all other periodic
eigenvalues continue to be real and simple. As a result, the first and second
bands $\Gamma_{1}$ and $\Gamma_{2}$ become nonreal separated curves symmetric
with respect to the real line. The set $\Omega_{1}$ becomes as the union of
the two separated symmetric curves $\Gamma_{1}$ and $\Gamma_{2}$ , while
$\Omega_{2},$ $\Omega_{3},...,$ continue to be connected sets. Summarizing the
Cases 1-3 let us emphasize the following results of [22] that will be used in
the next sections.

\begin{summary}
If $1/2<V<\sqrt{5}/2,$ then the antiperiodic eigenvalues $\lambda_{n}(1)$ for
all $n\in\mathbb{N}$ are the simple eigenvalues and nonreal numbers. If
$1/2<V<V_{2},$ then the periodic eigenvalues $\lambda_{n}(0)$ for all $n$ are
the simple eigenvalues and the real numbers and $\operatorname{Re}\left(
\Gamma_{2k-1}\cup\Gamma_{2k}\right)  =\left[  \lambda_{2k-1}(0),\lambda
_{2k}(0)\right]  $. If$\ V=V_{2},$ then $\lambda_{n}(0)$ for all
$n\in\mathbb{N}$ are real numbers, $\lambda_{1}(0)=\lambda_{2}(0)$ is a double
eigenvalue and $\lambda_{n}(0)$ for $n>2$ are the simple eigenvalues. If
$V_{2}<V<\sqrt{5}/2$, then the eigenvalues $\lambda_{1}(t)$ and $\lambda
_{2}(t)$ are nonreal pairwise conjugate numbers for all $t\in\left[
0,1\right]  ,$ all periodic eigenvalues are simple and $\lambda_{n}%
(0)\in\mathbb{R}$ for $n>2.$
\end{summary}

This paper can be considered as continuation of the paper [22]. Here we
investigate the Bloch eigenvalues and the spectrum of $L(V)$ for all $V>1/2,$
while they was investigated in [22] only for $1/2<V<\sqrt{5}/2.$ Note that as
$V$ increases the influence of the perturbation (2) also increases and the
investigation of the Bloch eigenvalues and spectrum become more complicated.
Moreover, in this paper, we consider the ESS and spectral expansion of $L(V),$
while in [22] they was not considered at all. Thus this paper with [22] give a
complete description of the spectral analysis of $L(V)$ for all $V>1/2.$

First, in Section 2 we define the spectral singularities and ESS and give some
results of the papers [7, 11, 12, 17, 19-24] that are used in the next
sections. Using it in Section 3 we consider the Bloch eigenvalues and spectrum
of $L(V)$. Then in Section 4 using the results of Section 3, we study the
spectral singularity, ESS and investigate the spectral expansion of $L(V)$.
Note that in the papers [21, 23, 24] we have proved that only the special type
of the spectral singularities called as the essential spectral singularities
(ESS) determines the form of the spectral expansion. Therefore in Section 4 we
consider the spectral expansion by investigating the ESS of $L(V)$. Finally,
in the conclusion (Section 5) we give some conjectures, discuss the critical
points and demonstrate the shape of the spectrum and form of the spectral
expansion of $L(V)$ for all $V\in(1/2,\infty)$ assuming the validity of the
conjectures. \ 

\section{Some Definitions and Preliminary Facts}

In this section we introduce some notations, give the precise definitions of
the spectral singularity and ESS and formulate some results of the papers [7,
11, 12, 17, 19-24] as summaries which are necessary for this paper. Moreover,
here we give some results for the operator $L(V)$ which immediately and
readily follows from the results of those papers.

First let us introduce some notations in order to give the definitions of the
spectral singularities and ESS. Let $\Psi_{n,t}$ and $\Psi_{n,t}^{\ast}$ \ be
respectively the normalized eigenfunctions corresponding to the simple
eigenvalues $\lambda_{n}(t)$\ and $\overline{\lambda_{n}(t)}$ of the operators
$L_{t}(q)$ and $(L_{t}(q))^{\ast}.$ It is well-known that (see page 39 of
[10]) if $\lambda_{n}(t)$ is a simple eigenvalue of $L_{t}(q),$ then the
spectral projection $e(\lambda_{n}(t))$ defined by contour integration of the
resolvent of $L_{t}(q)$ over the closed contour containing only the eigenvalue
$\lambda_{n}(t),$ has the form%
\begin{equation}
e(\lambda_{n}(t))f=\frac{1}{d_{n}(t)}(f,\Psi_{n,t}^{\ast})\Psi_{n,t},\text{ }%
\end{equation}
where $d_{n}(t)=\left(  \Psi_{n,t},\Psi_{n,t}^{\ast}\right)  $ and $\left(
\cdot,\cdot\right)  $ is the inner product in $L_{2}[0,\pi].$ Moreover
\begin{equation}
\left\Vert e\left(  \lambda_{n}(t)\right)  \right\Vert =\left\vert
d_{n}(t)\right\vert ^{-1}.
\end{equation}

As was noted in the papers [13, 14, 4], the spectral singularity of the
operator $L(q)$ are the points $\lambda\in\sigma(L(q))$ for which the
projections of $L(q)$ corresponding to the spectral arcs $\gamma\subset
U(\lambda,\varepsilon)$ or the projections $e\left(  \lambda_{n}(t)\right)  $
of $L_{t}(q)$ corresponding to the simple eigenvalues $\lambda_{n}(t)\in
U(\lambda,\varepsilon)$ are not uniformly bounded for all $\varepsilon>0,$
where $U(\lambda,\varepsilon)=:\left\{  z\in\mathbb{C}:\left\vert
\lambda-z\right\vert <\varepsilon\right\}  $ is the $\varepsilon$-neighborhood
of $\lambda$. Therefore, by (10), we have the following definitions for the
spectral singularities in term of $d_{n}.$

\begin{definition}
A point $\lambda\in\sigma(L(q))$ is said to be a spectral singularity of
$L(q)$ if there exist $n\in\mathbb{N}$ and sequence $\{t_{k}\}\subset(-1,1]$
such that $\lambda_{n}(t_{k})\rightarrow\lambda$ and $\left\vert d_{n}%
(t_{k})\right\vert \rightarrow0$ as $k\rightarrow\infty,$ where $\lambda
_{n}(t_{k})$ is a simple eigenvalue of the operator $L_{t_{k}}(q).$ Similarly,
we say that the operator $L(q)$ has a spectral singularity at infinity if
there exist sequences $\left\{  n_{k}\right\}  \subset\mathbb{N}$ and
$\left\{  t_{k}\right\}  \subset(-1,1]$ such that $\left\vert \lambda_{n_{k}%
}(t_{k})\right\vert \rightarrow\infty$ and $\left\vert d_{n_{k}}%
(t_{k})\right\vert \rightarrow0$ as $k\rightarrow\infty,$ where $\lambda
_{n_{k}}(t_{k})$ is a simple eigenvalue.
\end{definition}

In [19] using these definitions, (10) and the well-known result of McGarvey
[8] that $L(q)$ is a spectral operator if and only if the projections of the
operators $L_{t}(q)$ are bounded uniformly with respect to $t$ in $(-\pi,\pi]$
we obtained the following.

\begin{summary}
The operator $L(q)$ is a spectral operator if and only if it has no spectral
singularities in $\sigma(L(q))$ and at infinity. Moreover $L(q)$ is an
asymptotically spectral operator if and only if it has no spectral singularity
at infinity.
\end{summary}

Thus the spectral singularities play the crucial rule for the investigations
of the spectrality of $L$. Gesztezy and Tkachenko [4] proved two versions of a
criterion for the Hill operator $L(q)$ to be a spectral operator, one analytic
and one geometric. The analytic version was stated in term of the solutions of
Hill's equation. The geometric version of the criterion uses algebraic and
geometric \ properties of the spectra of periodic/antiperiodic and Dirichlet
boundary value problems.

The problem of describing explicitly, for which potentials $q$ the Hill
operators $L(q)$ are spectral operators appeared to have been open for about
60 years. In paper [16] we found the explicit conditions on the potential $q$
such that $L(q)$ is an asymptotically spectral operator. In [24] we find a
criterion for asymptotic spectrality of the operator $L(q)$ with the potential
(7) stated in term of $a$ and $b$. Moreover, in [24], we obtained the
following result

\begin{summary}
If $ab\in\mathbb{R}$, then the operator $L(q)$ with the potential (7) is a
spectral operator if and only if it is self adjoint.
\end{summary}

It readily implies the following result.

\begin{proposition}
The operator $L(q)$ with potential (2) is a spectral operator if and only if
$V=0$, that is, $q(x)=2\cos2x$ and $L(q)$ is a self-adjoint operator.
\end{proposition}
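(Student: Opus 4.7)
The plan is to reduce everything to Summary 3 by computing the product $ab$ for the specific potential (2). Writing (2) in the form $q(x)=ae^{-i2x}+be^{i2x}$ gives $a=1-2V$ and $b=1+2V$, so
\begin{equation*}
ab=(1-2V)(1+2V)=1-4V^{2}\in\mathbb{R}.
\end{equation*}
Hence Summary 3 applies verbatim and yields that $L(q)$ is a spectral operator if and only if $L(q)$ is self-adjoint.

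Next I would translate self-adjointness of $L(q)=-d^{2}/dx^{2}+q$ on $W_{2}^{2}(-\infty,\infty)$ into a condition on the potential. Since $q$ is bounded and periodic, the standard criterion is that $L(q)$ is self-adjoint in $L_{2}(-\infty,\infty)$ if and only if $q(x)\in\mathbb{R}$ for (almost) every $x$. Converting the exponentials in (2) to trigonometric form gives
\begin{equation*}
q(x)=(1+2V)e^{i2x}+(1-2V)e^{-i2x}=2\cos 2x+4iV\sin 2x,
\end{equation*}
so $q$ is real-valued if and only if $V=0$. In that case $q(x)=2\cos 2x$ and $L(q)$ is manifestly self-adjoint; conversely, for $V\neq 0$ the imaginary part $4V\sin 2x$ is not identically zero, so $L(q)$ cannot be self-adjoint and, by Summary 3, is not a spectral operator.

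There is no real obstacle here: the proposition is essentially an immediate specialization of Summary 3 to the one-parameter family (2), and the only calculation needed is to verify that $ab$ is real for every $V$ and to identify when the potential itself is real. I would present the argument in just a few lines, emphasizing that the key point is the numerical coincidence $ab=1-4V^{2}\in\mathbb{R}$, which places (2) inside the hypothesis class of Summary 3 for every admissible $V$.
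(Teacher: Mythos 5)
Your proof is correct and follows exactly the route the paper intends: the paper gives no written proof of Proposition 1, stating only that it ``readily implies'' from Summary 3, and your computation $ab=(1-2V)(1+2V)=1-4V^{2}\in\mathbb{R}$ together with the observation that $q=2\cos 2x+4iV\sin 2x$ is real-valued (equivalently, $L(q)$ is self-adjoint) precisely when $V=0$ is the intended specialization. Nothing is missing.
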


Thus Proposition 1 shows that the theory of spectral operators can not be used
for the investigation of the non-self-adjoint operator $L(q)$ with \ the
optical potential (2).

Note that Definition 1 and Summary 2 show respectively that the boundlessness
of $\frac{1}{d_{k}}$ is the characterization of the spectral singularities and
the spectrality of $L(q).$ However, as was discovered in the papers [21, 23]
to construct the spectral expansion we need to consider the integrability of
$\frac{1}{d_{k}}$ due to the following. In [3] it was proven that in the
self-adjoint case the spectral expansion of $L(q)$ has the following elegant
form
\begin{equation}
f=\frac{1}{2\pi}\sum_{n\in\mathbb{N}}\int\nolimits_{(-1,1]}a_{n}(t)\Psi
_{n,t}dt,
\end{equation}
where
\begin{equation}
a_{n}(t)=\frac{1}{\overline{d_{n}(t)}}\left(  \int\nolimits_{\mathbb{R}%
}f(x)\overline{\Psi_{n,t}^{\ast}(x)}dx\right)  .
\end{equation}
In the non-self-adjoint case to obtain the spectral expansion, we need to
consider the integrability of $a_{n}(t)\Psi_{n,t}$ with respect to $t$ over
$(-1,1]$ which is connected with the integrability of $\frac{1}{d_{n}}$ (see
(12)). For this in [21, 23] we introduced a new notions essential spectral
singularity (ESS) which is connected with the nonintegrability of $\frac
{1}{d_{k}}$, since it may have an integrable and nonintegrable bounlessness.
\ Moreover we proved that it determines the form of the spectral expansion for
$L(q).$ That is why in this paper investigating the ESS of $L(V)$ in term of
$V$ we classify the form of its spectral expansion also in term of $V.$ In
[21] and [23] we defined the ESS as follows.

\begin{definition}
A number $\lambda_{0}\in\sigma(L)$ is said to be an essential spectral
singularity (ESS) of $L(q)$ if there exist $t_{0}\in(-1,1]$ and $n\in
\mathbb{N}$ such that $\lambda_{0}=\lambda_{n}(t_{0})$ and $\frac{1}{d_{n}}$
is not integrable over $(t_{0}-\delta,t_{0}+\delta)$ for all $\delta>0.$
Similarly, we say that the operator $L(q)$ has ESS at infinity if there exist
sequence of integers $n_{s}$ and sequence of closed subsets $I(s)$ of $(-1,1]$
such that $\lambda_{n_{s}}(t)$ for $t\in I(s)$ are the simple eigenvalues and
\begin{equation}
\lim_{s\rightarrow\infty}\int\nolimits_{I(s)}\left\vert d_{n_{s}%
}(t)\right\vert ^{-1}dt=\infty.
\end{equation}

\end{definition}

To consider the spectral expansion of $L(V)$ we use the following results of
[23] formulated here as summary.

\begin{summary}
$(a)$ The spectral expansion has the elegant form (11) if and only if $L(q)$
has no ESS and ESS at infinity.

$(b)$ If $L(q)$ has no ESS at infinity, then it has at most finite number of
ESS and its spectral expansion has the following asymptotically elegant form
\begin{equation}
f(x)=\frac{1}{2\pi}\left(  \int\nolimits_{(-\pi,\pi]}\sum\limits_{n\in
N\left(  \mathbb{E}\right)  }a_{n}(t)\Psi_{n,t}(x)dt+\sum_{n\in\mathbb{N}%
\backslash N\left(  \mathbb{E}\right)  }\int\nolimits_{(-\pi,\pi]}a_{n}%
(t)\Psi_{n,t}(x)dt\right)  ,
\end{equation}
where $N\left(  \mathbb{E}\right)  $ consists at most of a finite number of
integers and is the set of the indices $n$ for which $\Gamma_{n}$ contains at
least one ESS. Moreover%
\[
\int\nolimits_{(-\pi,\pi]}\sum\limits_{n\in N\left(  \mathbb{E}\right)  }%
a_{n}(t)\Psi_{n,t}(x)dt=\lim_{\varepsilon\rightarrow0}\sum\limits_{n\in
N\left(  \mathbb{E}\right)  }\int\nolimits_{A(\varepsilon)}a_{n}(t)\Psi
_{n,t}(x)dt,
\]
where $A(\varepsilon)=(-\pi+\varepsilon,-\varepsilon)\cup(\varepsilon
,\pi-\varepsilon).$
\end{summary}

Thus to consider the spectral expansion it is necessary to investigate the
ESS. In this paper we use the following results of [21, 24] about ESS
formulated here as summary.

\begin{summary}
$(a)$ Let $\mathbb{E},$ $\mathbb{S},$ and $\mathbb{M}$ be respectively the
sets of ESS, spectral singularities and multiple eigenvalues of $L_{t}(q)$ for
$t\in(-1,1].$ Then $\mathbb{E}\subset\mathbb{S}\subset\mathbb{M}.$

$(b)$ If $t\in(0,1)$ and $\lambda$ is a multiple eigenvalue of $L_{t}(q),$
then $\lambda$ is a spectral singularity of $L$ and is not an ESS.

$(c)$ If $\lambda$ is a multiple $2$-periodic eigenvalue with geometric
multiplicity $1,$ then it is an ESS, where the periodic and antiperiodic
eigenvalues are called as the $2$-periodic eigenvalues.

$(d)$ If $ab\neq0,$ then the operator $L(q)$ with potential (7) has no ESS at infinity.
\end{summary}

The results $(a)-(c)$ and $(d)$ were proved in [21] and [24] respectively. By
Summary 5$(d)$ we have

\begin{corollary}
If $V\neq\pm1/2,$ then $L(V)$ has no ESS at infinity.
\end{corollary}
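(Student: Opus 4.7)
The plan is to reduce this immediately to Summary 5$(d)$, which asserts that the operator $L(q)$ with potential $q(x)=ae^{-i2x}+be^{i2x}$ of the form (7) has no ESS at infinity whenever $ab\neq 0$. So the only task is to match the coefficients of the optical potential (2) to the template (7) and read off the condition $ab\neq 0$ as a condition on $V$.

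Comparing
\[
q(x)=(1+2V)e^{i2x}+(1-2V)e^{-i2x}
\]
with $q(x)=ae^{-i2x}+be^{i2x}$, I identify $a=1-2V$ and $b=1+2V$. Hence
\[
ab=(1-2V)(1+2V)=1-4V^{2},
\]
and $ab\neq 0$ is equivalent to $V\neq \pm 1/2$. Under this hypothesis, Summary 5$(d)$ directly yields the conclusion that $L(V)$ has no ESS at infinity.

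There is essentially no obstacle; the only thing worth being careful about is bookkeeping of the sign convention in (7) versus (2), to make sure that the product $ab$ is computed from the correct pairing of Fourier coefficients (i.e., the coefficient of $e^{-i2x}$ times the coefficient of $e^{i2x}$). Once that is checked, the corollary is a one-line consequence of Summary 5$(d)$.
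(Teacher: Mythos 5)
Your proposal is correct and is exactly the paper's argument: the corollary is stated as an immediate consequence of Summary 5$(d)$, and your identification $a=1-2V$, $b=1+2V$, $ab=1-4V^{2}\neq0\iff V\neq\pm1/2$ is precisely the intended reduction.
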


To consider in detail the case $V\geq\sqrt{5}/2,$ that is, the case $c\geq2$
we use the following results of [17] about periodic and antiperiodic
eigenvalues formulated here as summary.

\begin{summary}
If $c\neq0,$ then the geometric multiplicity of the eigenvalues of the
operators $H_{0}(c)$, $H_{1}(c),$ $D(c)$ and $N(c),$ called as periodic,
antiperiodic, Dirichlet and Neumann eigenvalues respectively, is $1$ and the
following equalities hold
\begin{equation}
\sigma(D(c))\cap\sigma(N(c))=\varnothing,\text{ }\sigma(H_{0}(c))\cup
\sigma(H_{1}(c))=\sigma(D(c))\cup\sigma(N(c)),
\end{equation}
where $D(c)$ ($N(c))$ denotes the operator generated in $L_{2}[0,\pi]$ by (1)
with potential $2ic\cos2x$ and Dirichlet (Neumann) boundary conditions.
\end{summary}

Moreover, we use the following obvious result and the results of the papers
[11, 20] about the spectra of the operators $L(q)$ and $L_{t}(q)$ for
$t\in\lbrack0,1].$

\begin{summary}
$(a)$ If $t_{1}\in\lbrack0,1],$ $t_{2}\in\lbrack0,1]$ and $t_{1}\neq t_{2}$
then $\sigma\left(  L_{t_{1}}(q)\right)  \cap\sigma\left(  L_{t_{2}%
}(q)\right)  =\varnothing.$ It readily follows from (4).

$(b)$ \ The spectrum of $L(q)$ does not contain a closed curve (see [11]).

$(c)$ A number $\lambda_{n}(t)$ is a multiple eigenvalue of $L_{t}(q)$ of
multiplicity $p$ if and only if $p$ bands of the spectrum have common point
$\lambda_{n}(t).$ If $n\neq m,$ then the bands $\Gamma_{n}$ and $\Gamma
_{m\text{ }}$ may have only one common point (see [20]).
\end{summary}

Finally, we use the following results of [7, 12, 20] about PT symmetric potentials.

\begin{summary}
If $q(-x)=\overline{q(x)},$ then the following statements hold.

$(a)$ If two real numbers $c_{1}<c_{2}$ belong to $\Gamma_{n},$ then
$[c_{1},c_{2}]\subset\Gamma_{n}$ (see [20]).

$(b)$ If $\lambda\in\sigma(L_{t}(q))$, then $\overline{\lambda}\in\sigma
(L_{t}(q))$ (see [7])

$(c)$ If $\lambda\in\mathbb{R},$ then $F(\lambda)\in\mathbb{R},$ where $F$ is
defined in (4) (see [12]).
\end{summary}

\section{On the Bloch Eigenvalues and the Spectrum of $L(V)$}

\ First we consider the eigenvalues $\lambda_{n}(t)$ of $H_{t}(c)$ for all
$t\in\left[  0,1\right]  $, where $c\in(0,\infty).$

\begin{theorem}
All eigenvalues of the operator $H_{t}(c)$ lie on the union of the disks
\begin{equation}
D_{2c}\left(  (2n+t)^{2}\right)  :=\{\lambda\in\mathbb{C}:\left\vert
\lambda-(2n+t)^{2}\right\vert \leq2c\}
\end{equation}
for $n\in\mathbb{Z},$ where $t\in\lbrack0,1].$
\end{theorem}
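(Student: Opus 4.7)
The plan is to expand an eigenfunction in the Fourier basis adapted to the boundary conditions of $H_t(c)$ and exploit a simple maximum-coefficient argument.

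First I would note that the functions $\varphi_n(x)=e^{i(2n+t)x}/\sqrt{\pi}$, $n\in\mathbb{Z}$, form an orthonormal basis of $L_2[0,\pi]$ and each $\varphi_n$ satisfies the quasi-periodic boundary conditions $y(\pi)=e^{i\pi t}y(0)$, $y'(\pi)=e^{i\pi t}y'(0)$. They are the eigenfunctions of the unperturbed operator ($c=0$) with eigenvalues $(2n+t)^2$, which explains the centers of the disks in (16).

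Next, suppose $H_t(c)\Psi=\lambda\Psi$ with $\Psi\neq0$ and write $\Psi=\sum_{n\in\mathbb{Z}}c_n\varphi_n$. Using $2ic\cos 2x=ic(e^{i2x}+e^{-i2x})$, a direct computation gives the shift $2ic\cos(2x)\varphi_n=ic(\varphi_{n+1}+\varphi_{n-1})$, so the eigenvalue equation is equivalent to the three-term recursion
\begin{equation*}
\bigl((2n+t)^2-\lambda\bigr)c_n=-ic\,(c_{n-1}+c_{n+1}),\qquad n\in\mathbb{Z}.
\end{equation*}

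Now I would argue by contradiction: assume $\lambda$ lies outside every disk $D_{2c}((2n+t)^2)$, i.e.\ $|(2n+t)^2-\lambda|>2c$ for all $n$. Since $\Psi\in L_2[0,\pi]$ and $\Psi\neq0$, the sequence $(c_n)$ is in $\ell^2\setminus\{0\}$, so there exists $N\in\mathbb{Z}$ with $|c_N|=\max_n|c_n|>0$. Plugging $n=N$ into the recursion and using $|c_{N\pm1}|\leq|c_N|$ yields
\begin{equation*}
|(2N+t)^2-\lambda|\,|c_N|=c\,|c_{N-1}+c_{N+1}|\leq 2c\,|c_N|,
\end{equation*}
hence $|(2N+t)^2-\lambda|\leq 2c$, which contradicts the assumption. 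Therefore $\lambda\in\bigcup_{n\in\mathbb{Z}}D_{2c}((2n+t)^2)$.

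There is no real obstacle here beyond verifying that the basis is indeed adapted to the quasi-periodic boundary conditions and that the max-coefficient exists in $\ell^2$; the key mechanism is simply the diagonal dominance of the Jacobi-like matrix representing $H_t(c)$ in this Fourier basis.
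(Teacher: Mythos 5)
Your proposal is correct, and it reaches the same conclusion from the same starting point (the orthonormal basis $\left\{ \pi^{-1/2}e^{i(2n+t)x}:n\in\mathbb{Z}\right\} $ and the resulting relation between $\lambda$, the Fourier coefficients of $\Psi$, and the potential), but the concluding mechanism is genuinely different. The paper never writes out the three-term recursion: it keeps the right-hand side as $((2ic\cos 2x)\Psi,e^{i(2n+t)x})$, sums the squares of all coefficients, and uses Parseval's equality twice together with $\Vert 2ic\cos 2x\,\Psi\Vert\leq 2c\Vert\Psi\Vert$ to derive the contradiction $\pi<\pi$ --- an $\ell^{2}$ (operator-norm) argument that would work verbatim for any potential with supremum norm at most $2c$, with no use of the tridiagonal structure. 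You instead expand the potential term explicitly, obtain the Jacobi-type recursion $((2n+t)^{2}-\lambda)c_{n}=-ic(c_{n-1}+c_{n+1})$, and run an $\ell^{\infty}$ (maximal-coefficient, Gershgorin-style) argument at the index where $|c_{n}|$ is largest; the existence of that maximizer is correctly justified by $(c_{n})\in\ell^{2}\setminus\{0\}$. Your route buys a slightly more concrete picture (diagonal dominance of the tridiagonal matrix) at the cost of being tied to the specific two-mode form of the potential, whereas the paper's route generalizes immediately to arbitrary bounded potentials. Both proofs are complete and elementary; there is no gap in yours.
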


\begin{proof}
Suppose to the contrary that there exists an eigenvalue $\lambda$ of
$H_{t}(c)$ lying out of $D_{2c}((2n+t)^{2})$ for all $n\in\mathbb{Z}$. Then
the inequality $\left\vert \lambda-(2n+t)^{2}\right\vert >2c$ holds for all
$n\in\mathbb{Z}$. Using it, Parseval's equality for the orthonormal basis
$\left\{  \pi^{-1/2}e^{i\left(  2n+t\right)  x}:n\in\mathbb{Z}\right\}  $ in
$L_{2}\left[  0,\pi\right]  ,$ and the following well-known relation
\begin{equation}
(\lambda-(2n+t)^{2})(\Psi,e^{i\left(  2n+t\right)  x})=(\left(  2ic\cos
2x\right)  \Psi,e^{i\left(  2n+t\right)  x}),
\end{equation}
where $\left(  \cdot,\cdot\right)  $ is the inner product in $L_{2}\left[
0,\pi\right]  $ and $\Psi$ is a normalized eigenfunction of $H_{t}(c)$
corresponding to the eigenvalues $\lambda,$ we get the following
contradiction
\[
\pi=\sum_{n\in\mathbb{Z}}\left\vert (\Psi,e^{i\left(  2n+t\right)
x})\right\vert ^{2}<\sum_{n\in Z}\frac{\left\vert (\left(  2ic\cos2x\right)
\Psi,e^{i\left(  2n+t\right)  x})\right\vert ^{2}}{\left(  2c\right)  ^{2}%
}\leq\pi.
\]

The theorem is proved.
\end{proof}

\begin{remark}
It is well-known that the operator $H_{t}(c)$ is defined by the expression
\begin{equation}
-y^{^{\prime\prime}}+2ity^{^{\prime}}+t^{2}y+\left(  2ic\cos2x\right)  y
\end{equation}
and the periodic boundary condition.\ Let us redenote the eigenvalues of
$H_{t}(c)$ and $L_{t}(V)$ by $\lambda_{n}(t,c)$ and $\lambda_{n}(t,V)$
respectively, in order to stress their dependence on $c$ and $V.$ Since
$H_{t}(c)$ analytically depends on $t$ and $c$, if $t\in\left[  0,1\right]  $
is fixed and $\lambda(t,c)$ ($\lambda(t,V)$) is a simple eigenvalue of
$H_{t}(c)$ ($L_{t}(V)$)$,$ then there exists $\varepsilon>0$ and an analytic
function $\lambda(t,\cdot)$ ($\lambda(t,\cdot)$) in $\varepsilon$-neighborhood
$U(c,\varepsilon)$ ($U(V,\varepsilon)$) of $c$ ($V$) such that $\lambda(t,c)$
($\lambda(t,V)$) is an eigenvalue of $H_{t}(c)$ ($L_{t}(V)$) for all $c\in
U(c,\varepsilon)$ ($V\in U(V,\varepsilon)$). Moreover, for fixed $c$ ($V$) the
eigenvalue $\lambda_{n}(t,c)$ ($\lambda_{n}(t,V)$) analytically depend on $t$
if it is a simple eigenvalue. Recall that, as is noted in the introduction,
for the fixed $c$ and $V$ the eigenvalues $\lambda_{n}(t,c)$ and $\lambda
_{n}(t,V)$ of $H_{t}(c)$ and $L_{t}(V)$ are numerated so that they
continuously depend on $t\in\lbrack0,1].$
\end{remark}

Now using it and Summaries 7 and 8 we prove the following.

\begin{theorem}
$(a)$ Let $c$ be a positive fixed number and $\lambda_{n}(0,c)$ be the $n$-th
periodic eigenvalue of $H_{0}(c).$ If $\lambda_{n}(0,c)$ is a real number and
$\lambda_{n}(t,c)$ are the simple eigenvalues for all $t\in\lbrack0,b),$ where
$b\in\left[  0,1\right]  $, then $\lambda_{n}(t,c)$ are the real numbers for
all $t\in\left[  0,b\right]  .$ Moreover, if $d_{n}(c)\in\lbrack0,1)$ is the
greatest positive number such that $\lambda_{n}(t,c)$ are the real eigenvalues
for all $t\in\left[  0,d_{n}(c)\right]  ,$ then $\lambda_{n}(t,c)$ are the
nonreal eigenvalues for all $t\in(d_{n}(c),1].$

$(b)$ Let $t$ be a fixed number from $\left[  0,1\right]  $ and $\lambda(t,c)$
be the simple eigenvalues of $H_{t}(c)$ for all $c\in\lbrack c_{0}(t),r(t))$
such that $\lambda(t,\cdot)$ is continuous on $\left[  c_{0}(t),r(t)\right]
,$ where $r(t)>c_{0}(t)>0$. If $\lambda(t,c_{0}(t))$ is a real (nonreal)
number, then $\lambda(t,c)$ are the real (nonreal) numbers for all
$c\in\left[  c_{0}(t),r(t)\right]  $ ($c\in\lbrack c_{0}(t),r(t))$).

$(c)$ Let $t\in\left[  0,1\right]  $ be a fixed number. If the eigenvalue
$\lambda(t,c)$ of $H_{t}(c)$ continuously depends on $c$ and changes from real
(nonreal) to nonreal (real) when $c$ moves from the left to the right of a
point $c(t)$ then $\lambda(t,c(t))$ is a multiple eigenvalue.
\end{theorem}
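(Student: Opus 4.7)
The plan rests on three tools used in tandem. First, from the isospectrality (8) between $H_t(c)$ and the PT-symmetric operator $L_t(V)$, combined with Summary 9(b), the spectrum of $H_t(c)$ is stable under complex conjugation; in particular $\overline{\lambda_n(t,c)}$ is always an eigenvalue of $H_t(c)$. Second, by Remark 1, simple eigenvalues depend analytically on $t$ and $c$, and by analytic perturbation theory each simple eigenvalue is locally the unique eigenvalue in a sufficiently small spectral disk, stably as the parameter varies in a small neighborhood. Third, Summary 8(b)--(c) forbids both closed curves in $\sigma(L)$ and two distinct common points for a pair of different bands.

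For the first claim of part $(a)$, I run an open--closed argument on $I=\{t\in[0,b):\lambda_n(t,c)\in\mathbb{R}\}$. Closedness of $I$ in $[0,b)$ is immediate from continuity of $\lambda_n(\cdot,c)$. For openness at $t_0\in I$, simplicity of $\lambda_n(t_0,c)$ supplies a disk $D$ around $\lambda_n(t_0,c)$ and a neighborhood $U$ of $t_0$ on which $\lambda_n(t,c)$ is the only eigenvalue of $H_t(c)$ in $D$; since $\overline{\lambda_n(t,c)}$ is also an eigenvalue and lies in $D$ by continuity, uniqueness forces $\overline{\lambda_n(t,c)}=\lambda_n(t,c)$. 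Hence $I=[0,b)$, and continuity extends reality to $t=b$.

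For the ``moreover'' assertion, suppose for contradiction that $\lambda_n(t^{*},c)\in\mathbb{R}$ for some $t^{*}>d_n(c)$. Together with the existence of nonreal values arbitrarily close to $d_n(c)$ from the right (by maximality of $d_n(c)$), a routine extraction using continuity yields an interval $[a,b]\subset[d_n(c),1]$ with $a<b$ on which $\lambda_n(\cdot,c)$ is real at the endpoints and nonreal on $(a,b)$. On $(a,b)$, the conjugate $\overline{\lambda_n(t,c)}$ is a distinct eigenvalue, and continuity in $t$ identifies it with a fixed band $\lambda_m$, $m\neq n$; passing to limits at the endpoints gives $\lambda_m(a,c)=\lambda_n(a,c)$ and $\lambda_m(b,c)=\lambda_n(b,c)$. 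If these two real values differ, then $\Gamma_n$ and $\Gamma_m$ share two distinct points, contradicting Summary 8(c); if they coincide, the concatenation of the arcs $\{\lambda_n(t,c):t\in[a,b]\}$ and $\{\lambda_m(t,c):t\in[a,b]\}$, sitting in opposite open half planes away from their common real endpoint, forms a closed curve in $\sigma(L)$, contradicting Summary 8(b).

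Part $(b)$ is parallel: the same open--closed dichotomy in the parameter $c$, driven by analyticity in $c$ from Remark 1 and the identical PT-symmetry argument, shows that the set of $c\in[c_0(t),r(t))$ on which $\lambda(t,c)\in\mathbb{R}$ (respectively $\lambda(t,c)\notin\mathbb{R}$) is both open and closed, hence fills the whole half-open interval; continuity extends reality to the closed endpoint $c=r(t)$, but not nonreality (limits of nonreal numbers may be real), which accounts for the half-open conclusion in the nonreal subcase. Part $(c)$ is then an immediate consequence of the same perturbation picture: if $\lambda(t,c(t))$ were simple, a small spectral disk around it would contain a unique eigenvalue of $H_t(c)$ for all $c$ near $c(t)$; but on the nonreal side of $c(t)$ the distinct eigenvalues $\lambda(t,c)$ and $\overline{\lambda(t,c)}$ both sit in that disk by continuity, forcing $\lambda(t,c(t))$ to be a multiple eigenvalue. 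The main technical obstacle is the bookkeeping in the ``moreover'' part of $(a)$---extracting the correct interval $[a,b]$, verifying that a single adjacent band carries the conjugate values, and disposing of the coincident-endpoints subcase via the closed-curve ban rather than via two distinct crossings.
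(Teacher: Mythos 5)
Your proposal is correct and follows essentially the same route as the paper: local uniqueness of a simple eigenvalue under small perturbation of the parameter plus conjugation-invariance of the spectrum (the paper's Summary 8(b)) propagates reality, and the no-closed-curve theorem (Summary 7(b)) rules out a return to the real line, your clopen/connectedness phrasing being the paper's ``let $d$ be the greatest such number'' argument in different clothing. The only divergence is in the ``moreover'' clause of $(a)$, where your identification of the conjugate arc with a single fixed band $\Gamma_{m}$ (needed to invoke the two-common-points bound) is an unnecessary and slightly fragile detour, since band indices may switch at multiple eigenvalues; the paper avoids this by observing directly that $\left\{  \lambda_{n}(t,c):t\in\left[  d_{n}(c),t_{0}\right]  \right\}  \cup\left\{  \overline{\lambda_{n}(t,c)}:t\in\left[  d_{n}(c),t_{0}\right]  \right\}$ is itself a closed curve in the spectrum (and note your Summary labels are shifted: the facts you use are 7(b), 7(c) and 8(b) in the paper's numbering).
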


\begin{proof}
$(a)$ Since $c$ is fixed and $\lambda_{n}(0,c)$ is a simple eigenvalue,
$\lambda_{n}(t,c)$ analytically depends on $t$ in some neighborhood of $0$ and
there exist positive constants $\varepsilon$ and $\delta(0)$ such that the
operator $H_{t}(c)$ has a unique eigenvalue $\lambda_{n}(t,c)$ in
$U(\lambda_{n}(0,c),\varepsilon)$ whenever $\left\vert t\right\vert
<\delta(0).$ If $\lambda_{n}(t,c)$ is a nonreal number then by\ Summary
8$(b),$ $\overline{\lambda_{n}(t,c)}$ is also eigenvalue of \ $H_{t}(c)$ lying
in in $U(\lambda_{n}(0,c),\varepsilon)$ which contradicts the uniqueness of
$\lambda_{n}(t,c).$ Thus $\lambda_{n}(t,c)$ is a real number for all
$t\in\lbrack0,\delta(0)).$ Let $d$ be greatest number such that $\lambda
_{n}(t,c)$ is real for $t\in\lbrack0,d)$ and $d<b.$ Then by the condition of
the theorem and by continuity of $\lambda_{n}(\cdot,c)$ with respect to $t$
the eigenvalue $\lambda_{n}(d,c)$ is a simple and real eigenvalue. Therefore
instead of $\lambda_{n}(0,c)$ using $\lambda_{n}(d,c)$ and repeating the above
argument we obtain that $\lambda_{n}(t,c)$ is a real number for all
$t\in\lbrack0,d+\delta(d))$ which contradicts the definition of $d.$ It
implies that $\lambda_{n}(t,c)$ is a real number for all $t\in\lbrack0,b)$ and
hence, by the continuity of $\lambda_{n}(\cdot,c),$ for all $t\in\lbrack0,b].$

Now to prove the second statement of $(a)$\ suppose to the contrary that
$\lambda_{n}(t_{0},c)$ is a real eigenvalue for some $t_{0}=\left(
d_{n}(c)+\delta\right)  \in\lbrack0,1],$ where $\delta>0.$ By the definition
of $d_{n}(c)$ \ for each $0<\varepsilon<\delta$ there exists $t_{1}\in
(d_{n}(c),d_{n}(c)+\varepsilon)$ such that $\lambda_{n}(t_{1},c)$ is a nonreal
number. Then using Summary 8$(b)$ and taking into account that $\lambda
_{n}(t,c)$ continuously depend on $t,$ one can readily see that the spectrum
of $H(c)$ contains a closed curve
\[
\left\{  \lambda_{n}(t,c):t\in\left[  d_{n}(c),t_{0}\right]  \right\}
\cup\left\{  \overline{\lambda_{n}(t,c)}:t\in\left[  d_{n}(c),t_{0}\right]
\right\}  .
\]
It contradicts to the Summary 7$(b)$.

$(b)$ Instead of the fixed $c$ and variable $t$ using respectively the fixed
$t$ and variable $c$ and repeating the proof of the first statement
of$\ \left(  a\right)  $ we obtain the proof of $\left(  b\right)  $ when
$\lambda(t,c_{0})$ is a real number. If $\lambda(t,c_{0})$ is a nonreal number
and simple eigenvalue, then $\lambda(t,c)$ is a nonreal number for
$c\in\lbrack c_{0},c_{0}+\delta)$ for some $\delta>0.$ Let $d$ be greatest
number such that $\lambda(t,c)$ is nonreal for $c\in\lbrack c_{0},d)$ and
$d<r(t).$ Then $\lambda(t,d)$ is a real and simple eigenvalue. It implies that
the operator $H_{t}(c)$ has a unique eigenvalue $\lambda$ in $\varepsilon
$-neighborhood of $\lambda(t,d),$ whenever $c\in(d-\gamma(\varepsilon),d)$ for
some $\gamma(\varepsilon)>0.$ Then by the definition of $d$, $\lambda$ is a
nonreal number and by Summary 8$(b),$ $\overline{\lambda}$ is also an
eigenvalues of $H_{t}(c)$ lying in $\varepsilon$-neighborhood of
$\lambda(t,d),$ which contradicts the above uniqueness.

$(c)$ Suppose to the contrary that $\lambda(t,c(t))$ is a simple eigenvalue.
Then by $(b)$ the eigenvalue $\lambda(t,c)$ does not change from real
(nonreal) to nonreal (real) when $c$ moves from the left to the right of the
point $c(t)$. The theorem is proved.
\end{proof}

Now we consider the periodic and antiperiodic eigenvalues for $c\geq2.$ For
this first let us formulate the corresponding results of [22] for $c<2$ as the
next summary.

\begin{summary}
If $c<2$, then the operator $H_{1}(c)$ (\ $H_{0}(c))$ have $2$ eigenvalues in
the disks $D_{4}((2n-1)^{2})$ for $n=1,2,...$ (in $D_{4}((2n)^{2})$ for
$n=2,3,...)$, defined by (16) These eigenvalues are simple. One of these
antiperiodic (periodic) eigenvalues is AD (PD)and the other is AN
(PN)eigenvalue, where AD, PD, AN and PN eigenvalues are respectively the
eigenvalues belonging to $\sigma(H_{1}(c))\cap\sigma(D(c)),$ $\sigma
(H_{0}(c))\cap\sigma(D(c)),$ $\sigma(H_{1}(c))\cap\sigma(N(c))$ and
$\sigma(H_{0}(c))\cap\sigma(N(c))$ (see (15)). Moreover, in the outsides of
the disks $D_{4}((2n)^{2})$ for $n=2,3,...$ there exist two PN eigenvalues
$\lambda_{1}(0)$ and $\lambda_{2}(0)$ and one PD eigenvalue $\lambda_{3}(0)$.
\end{summary}

\begin{theorem}
$(a)$ Let $n_{1}=\left\lfloor (c+1)/2\right\rfloor ,$ where $\left\lfloor
x\right\rfloor $ denotes the integer part of the positive number $x.$ For each
$n>n_{1}$ the number of the eigenvalues (counting multiplicity) of $H_{0}(c)$
lying in $D_{2c}((2n)^{2})$ is $2$. They are the simple eigenvalues and the
real numbers. One of them is the PD and the other is the PN eigenvalue. The
number of eigenvalues (counting multiplicity) of $H_{0}(c)$ lying outsides the
disks $D_{2c}((2n)^{2})$ for $n>n_{1}$ is $2n_{1}+1.$ They lie in the
rectangle
\begin{equation}
\left\{  \lambda\in\mathbb{C}:\left\vert \operatorname{Im}\lambda\right\vert
\leq2c,\text{ }-2c\leq\operatorname{Re}\lambda\leq\left(  2n_{1}\right)
^{2}+2c\right\}  .
\end{equation}

$(b)$ Let $n_{2}=\left\lfloor c/2\right\rfloor .$ For each $n>n_{2}$ the
number of the eigenvalues (counting multiplicity) of $H_{1}(c)$ lying in
$D_{2c}((2n+1)^{2})$ is $2$. They are the simple eigenvalues and the nonreal
conjugate numbers. One of them is the AD and the other is the AN eigenvalue.
The number of the eigenvalues of $H_{1}(c)$ lying outsides of the disks
$D_{2c}((2n+1)^{2})$ for $n>$ $n_{2}$ is $2n_{2}+2.$ They lie in the
rectangle
\begin{equation}
\left\{  \lambda\in\mathbb{C}:\left\vert \operatorname{Im}\lambda\right\vert
\leq2c,\text{ }-2c\leq\operatorname{Re}\lambda\leq\left(  2n_{2}+1\right)
^{2}+2c\right\}  .
\end{equation}

\end{theorem}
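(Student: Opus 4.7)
My plan rests on four ingredients: the disk localization of Theorem 1; the decomposition $\sigma(H_0(c))\cup\sigma(H_1(c))=\sigma(D(c))\cup\sigma(N(c))$ together with $\sigma(D(c))\cap\sigma(N(c))=\varnothing$ from Summary 7; the continuation-in-$c$ dichotomy of Theorem 2$(b)$; and the $c<2$ information packaged in Summaries 1 and 9. Parts $(a)$ and $(b)$ are structurally identical, so I will describe the scheme for $(a)$ and indicate the changes for $(b)$ at the end.

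\emph{Isolation, counting, simplicity.} For $n>n_1=\lfloor(c+1)/2\rfloor$ one has $n\ge n_1+1>(c+1)/2$, hence $2n-1>c$ and $8n-4>4c$, so the center $(2n)^2$ is separated from every other even-square center by more than $4c$, and $D_{2c}((2n)^2)$ is disjoint from every other disk appearing in Theorem 1. Using $\|2ic\cos 2x\|_\infty=2c$ and a standard Rouch\'e/Kato perturbation argument relative to the self-adjoint operators $H_0(0), D(0), N(0)$, I obtain that $D_{2c}((2n)^2)$ contains exactly two algebraic eigenvalues of $H_0(c)$, exactly one of $D(c)$, and exactly one of $N(c)$, matching the multiplicities at $c=0$. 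Since the antiperiodic disks of Theorem 1 are centered at odd squares and hence avoid our disk, Summary 7 forces the two periodic eigenvalues to coincide, as a multiset, with one PD plus one PN eigenvalue; then $\sigma(D(c))\cap\sigma(N(c))=\varnothing$ yields distinctness, hence algebraic simplicity, of both. Because this step uses only the inequality $n>n_1(c')$, it applies uniformly to every $c'$ in that range, which rules out any collision of the two eigenvalues along a continuation in $c$.

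\emph{Reality via continuation.} Fix a reference $c_0\in(V_2,\sqrt{5}/2)$, so that $c_0<2$ and $n>n_1(c_0)$ for every $n\ge 2$. Summaries 9 and 1 then provide, at $c=c_0$, two simple and real eigenvalues of $H_0(c_0)$ in $D_{2c_0}((2n)^2)$: their indices in the enumeration of $\lambda_k(0)$ are $\ge 3$, to which the reality part of Summary 1 applies. Letting $c$ vary continuously from $c_0$ up to its target value, the previous paragraph preserves simplicity throughout, so Theorem 2$(b)$ transports reality to the whole segment. For the remaining eigenvalues, the same Rouch\'e argument applied to a fixed contour enclosing $\bigcup_{0\le m\le n_1}D_{2c}((2m)^2)$ identifies the $H_0(c)$-eigenvalues inside with those of $H_0(0)$ inside, which number one at $0$ plus two at each $(2m)^2$ for $1\le m\le n_1$, totalling $2n_1+1$; this union sits inside the stated rectangle.

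\emph{Part $(b)$ and the main obstacle.} Part $(b)$ runs along identical lines after replacing $H_0$, $n_1$, and the disks $D_{2c}((2n)^2)$ by $H_1$, $n_2=\lfloor c/2\rfloor$, and $D_{2c}((2n+1)^2)$; the only further change is that Summary 1 now furnishes nonreality (in place of reality) of all antiperiodic eigenvalues at the reference $c_0<2$, and Theorem 2$(b)$ transports nonreality. The PT symmetry of Summary 8$(b)$ combined with the count of two eigenvalues in the symmetric disk then forces the pair to be complex conjugate. The central difficulty, and the reason Theorem 1 alone does not suffice, is the preservation of algebraic simplicity along the $c$-continuation: without it Theorem 2$(b)$ fails and an eigenvalue could leave or join the real axis. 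The Dirichlet/Neumann dichotomy of Summary 7 supplies this simplicity cleanly, and is what makes the argument go through.
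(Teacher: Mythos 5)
Your proposal is correct and follows essentially the same route as the paper: isolation of the disks for $n>n_{1}$, a Riesz-projection/Rouch\'e count from $c=0$ giving two periodic, one Dirichlet and one Neumann eigenvalue per disk, the PD/PN (resp.\ AD/AN) dichotomy of Summary 7 to get distinctness and hence simplicity, Summary 1 plus the continuation-in-$c$ statement of Theorem 2$(b)$ to transport reality (resp.\ nonreality, with conjugacy from PT symmetry), and the rectangle contour for the finitely many remaining eigenvalues. Your version merely makes explicit the reference point $c_{0}<2$ at which Summary 1 is invoked and the uniform-in-$c'$ simplicity needed to apply Theorem 2$(b)$, both of which the paper leaves implicit.
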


\begin{proof}
$(a)$ If $n>n_{1},$ then $n>(c+1)/2$ and the disk $D_{2c}((2n)^{2})$ has no
common points with the other disks $D_{2c}((2m)^{2})$, because $\left\vert
(2m)^{2}-(2n)^{2}\right\vert >4c$ for $m\neq n.$ Hence, it follows from
Theorem 1 that the boundary of $D_{2c+\varepsilon}((2n)^{2})$ lies in the
resolvent sets of the operators $H_{0}(\alpha)$ for all $\alpha\in\left[
0,c\right]  $ if $\varepsilon$ is a sufficienly small positive number.
Therefore the projection of $H_{0}(\alpha)$ defined by contour integration
over the boundary of $D_{2c+\varepsilon}((2n)^{2})$ depends continuously on
$\alpha.$ It implies that the number of eigenvalues (counting the
multiplicity) of $H_{0}(\alpha)$ lying in $D_{2c+\varepsilon}((2n)^{2})$ are
the same for all $\alpha\in\left[  0,c\right]  $. Since $H_{0}(0)$ has two
eigenvalues in $D_{2c+\varepsilon}((2n)^{2}),$ the operator $H_{0}(c)$ has
also $2$ eigenvalues. Letting $\varepsilon$ tend to zero we obtain that
$H_{0}(c)$ has two eigenvalues (counting the multiplicity) in $D_{2c}%
((2n)^{2}).$

Instead of the operator $H_{0}(c)$ using the operators $N(c)$ and $D(c),$
taking into account that $N(0)$ and $D(0)$ have one eigenvalue in
$D_{2c}((2n)^{2})$ and repeating the above arguments we obtain that the
operators $N(c)$ and $D(c),$ have one eigenvalue in $D_{2c}((2n)^{2}).$ It
with (15) implies that the periodic eigenvalues lying in $D_{2c}((2n)^{2})$
are different numbers and hence they are simple eigenvalues. Now using Summary
1 and Theorem 2$(b)$ we obtain that they are the real numbers.

The disks $D_{2c}\left(  (2n)^{2}\right)  $ for $n=0,1,...,n_{1}$ are
contained in the rectangle (19). Moreover the rectangle (19) has no common
points with the disks $D_{2c}\left(  (2n)^{2}\right)  $ for $n>(c+1)/2.$ On
the other hand, the rectangle (19) contains $2n_{1}+1$ eigenvalues of
$H_{0}(0).$ Therefore instead of $D_{2c}((2n)^{2})$ using (19), and arguing as
above we complete the proof of $(a).$

$(b)$ Instead of $D_{2c}\left(  (2n)^{2}\right)  $ and the rectangle (19)
using $D_{2c}\left(  (2n+1)^{2}\right)  $ and the rectangle (20) respectively,
and arguing as above we get the proof of $(b).$
\end{proof}

The following theorem plays the crucial roles in the study of the spectrum of
$\ H(c).$

\begin{theorem}
Let $n_{3}=\left\lfloor (2c+1)/2\right\rfloor +1$ and $t\in(0,1).$Then for any
$b\geq(2n_{3})^{2}-2c$ the following statements hold.

$(a)$ The number of the eigenvalues of $H_{t}(c)$ lying in $(b,b+4)$ is less
than $3$.

$(b)$ The multiplicity of any eigenvalue of $H_{t}(c)$ lying in \ $\left(
b,\infty\right)  $ is less than $3.$
\end{theorem}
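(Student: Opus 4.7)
The plan is to combine the disk localisation of Theorem~1 with a Rouché-type continuation argument in the parameter $c$, in the spirit of the proof of Theorem~3.

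\textbf{Step 1 (counting Fourier centres).} Under the hypothesis $b \geq (2n_3)^2 - 2c$, I would first show that at most two indices $n \in \mathbb{Z}$ satisfy $(2n+t)^2 \in (b-2c, b+4+2c)$. Writing the condition as $|2n+t| \in (\sqrt{b-2c},\sqrt{b+4+2c})$, the latter interval has length
\[
\sqrt{b+4+2c}-\sqrt{b-2c} \;=\; \frac{4c+4}{\sqrt{b+4+2c}+\sqrt{b-2c}} \;<\; 2,
\]
since $b-2c \geq (2c+2)^2 - 4c = 4c^2+4c+4 > (2c+1)^2$ gives $\sqrt{b-2c}+\sqrt{b+4+2c} > (2c+1)+(2c+2) > 4c+2$. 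Because $\{2n+t : n \in \mathbb{Z}\}$ is a lattice of spacing $2$, an interval of length strictly less than $2$ contains at most one element with $2n+t>0$ and at most one with $2n+t<0$, giving at most two indices in total.

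\textbf{Step 2 (Rouché continuation).} Regard $H_t(\alpha)$ as an analytic family in $\alpha \in [0, c]$. Pick generic parameters $b' \in [b-2c, b]$, $b'' \in [b+4, b+4+2c]$ and $H > 2c$, and set $U = (b', b'') \times (-H, H)$. Theorem~1 forces the top and bottom of $\partial U$ into the resolvent set of $H_t(\alpha)$ for every $\alpha \in [0, c]$, because all eigenvalues then lie in disks $D_{2\alpha}((2n+t)^2) \subset D_{2c}((2n+t)^2)$ whose imaginary parts are bounded by $2c < H$. A genericity argument ensures that $b'$ and $b''$ can be chosen inside their allowed intervals so that the two vertical sides also avoid every (discrete, analytically varying) eigenvalue branch throughout $[0,c]$. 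The rank of the spectral projection cut out by $\partial U$ is then constant in $\alpha$, and at $\alpha = 0$ it equals the number of Fourier centres $(2n+t)^2$ in $U$; since $U \cap \mathbb{R} \subset (b-2c, b+4+2c)$, Step~1 bounds this count by $2$.

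\textbf{Step 3 (conclusion).} For $(a)$, every real eigenvalue of $H_t(c)$ in $(b, b+4) \subset U$ is counted in Step~2, yielding at most $2$ such eigenvalues with multiplicity. For $(b)$, given an eigenvalue $\lambda_0 > (2n_3)^2-2c$ of $H_t(c)$, take $b = \lambda_0 - \varepsilon$ for a sufficiently small $\varepsilon > 0$ with $b \geq (2n_3)^2-2c$, so that $\lambda_0 \in (b, b+4)$; part $(a)$ then bounds the total multiplicity of eigenvalues in $(b, b+4)$ by $2$, and since $\lambda_0$ alone contributes its own multiplicity, that multiplicity is at most $2$.

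\textbf{Main obstacle.} The delicate point is the genericity argument in Step~2: one must ensure the vertical sides of $\partial U$ stay off the eigenvalue branches uniformly over $\alpha \in [0,c]$. This is plausible but not wholly routine, because the eigenvalue branches of $H_t(\cdot)$ may have branch points at multiple eigenvalues, so they trace out an analytic variety in $\mathbb{C} \times [0, c]$ that must be cleanly avoided. A small perturbation of $b'$ and $b''$ inside $[b-2c, b]$ and $[b+4, b+4+2c]$, combined with Theorem~1 pinning the eigenvalues inside the disks, should deliver the required clearance; verifying this carefully is the main technical bookkeeping of the proof.
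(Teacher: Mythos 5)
Your overall strategy --- localise the spectrum with Theorem 1 and transport the eigenvalue count from the free operator $H_{t}(0)$ by a continuation argument in $\alpha\in\lbrack0,c]$ --- is the same as the paper's, and Steps 1 and 3 are essentially sound. One slip in Step 1: the inequality $b-2c\geq(2c+2)^{2}-4c$ presupposes $2n_{3}\geq2c+2$, which can fail (for $c=2.4$ one has $n_{3}=3$, so $2n_{3}=6<6.8=2c+2$); however the true bound $2n_{3}>2c+1$ still gives $b-2c>(2c+1)^{2}-4c=4c^{2}+1$, hence $\sqrt{b-2c}>2c$ and $\sqrt{b+4+2c}>2$, so the denominator exceeds $2c+2$ and the interval length is still less than $2$. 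That part is reparable.

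The genuine gap is exactly the one you flagged in Step 2, and your proposed repair does not close it. For the rank of the Riesz projection over $\partial U$ to be constant in $\alpha$, the whole contour must lie in the resolvent set of $H_{t}(\alpha)$ for every $\alpha\in\lbrack0,c]$. Your window for the left vertical side is $[b-2c,b]$, of length $2c$, whereas a single localisation disk $D_{2c}((2m+t)^{2})$ projects onto a real interval of length $4c$; so the entire window can sit underneath one disk (take $b=(2m+t)^{2}+c$), and then no choice of $b^{\prime}$ keeps the vertical segment outside that disk. Inside such a disk the eigenvalue branches $\alpha\mapsto\lambda_{j}(\alpha)$ are continuous curves whose real projections are compact intervals that may cover all of $[b-2c,b]$, so genericity in $b^{\prime}$ alone cannot exclude a crossing $\operatorname{Re}\lambda_{j}(\alpha)=b^{\prime}$ at some $\alpha$. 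The paper circumvents this entirely: for $n\geq n_{3}$ and $t\in(0,1/2]$ it groups the two disks $D_{2c}((2n-t)^{2})$ and $D_{2c}((2n+t)^{2})$ into one rectangle $R_{n}(t)$ (with the analogous grouping for $t\in(1/2,1)$), checks that these rectangles are pairwise disjoint, disjoint from the remaining disks, and separated from one another by gaps of length greater than $4$, and runs the contour through those gaps. The contour then avoids the union of all disks, hence is automatically in the resolvent set for every $\alpha$; each $R_{n}(t)$ carries exactly two eigenvalues counting multiplicity; and $(b,b+4)$ meets at most one $R_{n}(t)$ precisely because the gaps exceed $4$. You need this grouping-and-gap structure (or an equivalent device) to make Step 2 rigorous.
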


\begin{proof}
First, let us consider the case $t\in(0,1/2]$. If $n\in\mathbb{N}$ and $n\geq
n_{3}>\left(  2c+1\right)  /2,$ then the rectangle
\[
R_{n}(t):=\left\{  \lambda\in\mathbb{C}:\left\vert \operatorname{Im}%
\lambda\right\vert \leq2c,\text{ }(2n-t)^{2}-2c\leq\operatorname{Re}%
\lambda\leq(2n+t)^{2}+2c\right\}  .
\]
has no common points with the other rectangles $R_{m}(t)$ for $m\neq n$.
Therefore arguing as in the proof of Theorem 3$(a)$ we obtain that $R_{n}(t)$
contains two eigenvalue (counting the multiplicity). Moreover, using Theorem 1
and taking into account that the rectangle $R_{n}(t)$ contains the disks
$D_{2c}((2n-t)^{2})$ and $D_{2c}((2n+t)^{2})$ we obtain that the eigenvalues
lying in the half-plane $\left\{  \lambda\in\mathbb{C}:\operatorname{Re}%
\lambda>(2n_{3})^{2}-2c\right\}  $ belong to the rectangle $R_{n}(t)$ for some
$n\geq n_{3}.$ On the other hand, the distance between $R_{n}(t)$ and
$R_{n+1}(t)$ for $n\geq n_{3}$ is greater than $4.$ Therefore for any
$b\geq(2n_{3})^{2}-2c$ the interval $(b,b+4)$ may have common points only with
one of the rectangles $R_{n}(t)$ for $n\geq n_{3}.$ Thus this interval may
contain at most two eigenvalues of the operator $H_{t}(c).$

Instead of the rectangle $R_{n}(t)$ using the rectangle%
\[
\left\{  \lambda\in\mathbb{C}:\left\vert \operatorname{Im}\lambda\right\vert
\leq2c,\text{ }(2n+t)^{2}-2c\leq\operatorname{Re}\lambda\leq(2n+2-t)^{2}%
+2c\right\}
\]
and repeating the proof of the case $t\in(0,1/2]$ we get the proof of $(a)$ in
the case $t\in(1/2,1)$.

The proof of $(b)$ immediately follows from $(a).$
\end{proof}

Now using the above theorems, the following notations and some well-known
properties of the Hill discriminant $F(\lambda)$ defined in (4) we consider
the spectrum of $\ H(c).$

\begin{notation}
We denote the periodic eigenvalues of $H(c)$ lying in rectangle (19) and in
the disks $D_{2c}((2n)^{2})$ for $n>n_{1}$ by $\lambda_{1}(0),$ $\lambda
_{2}(0),...,\lambda_{2n_{1}+1}(0)$ and $\lambda_{2n}(0),$ $\lambda_{2n+1}(0)$
respectively, where $n_{1}$ is defined in Theorem 3$(a).$ By Theorem 3$(a),$
$\lambda_{2n}(0)$ and $\lambda_{2n+1}(0)$ for $n>n_{1}$ are the distinct real
numbers and simple eigenvalues. We denote they in increasing order
$\lambda_{2n}(0)<\lambda_{2n+1}(0)$ and hence
\begin{equation}
\lambda_{2n_{1}+2}(0)<\lambda_{2n_{1}+3}(0)<\lambda_{2n_{1}+4}(0)<\lambda
_{2n_{1}+5}(0)<....
\end{equation}
Thus from now on the periodic eigenvalues are numerated in the described
manner, the eigenvalues of $H_{t}(c)$ for $t\in(0,1],$ as is noted in Remark
1, are numerated so that $\lambda_{n}$ is a continuous function on $[0,1]$ and
the $n$-th band $\Gamma_{n}$ is defined by (6).
\end{notation}

By (4) the eigenvalues of $H_{0}(c)$ and $H_{1}(c)$ are respectively the roots
of $F(\lambda)=2$ and $F(\lambda)=-2$ and $\sigma(H(c))=\left\{  \lambda
\in\mathbb{C}:-2\leq F(\lambda)\leq2\right\}  $. Thus by Summary 8$(c)$ we
have%
\begin{equation}
\lambda\in\operatorname{Re}(\sigma(H(c)))\Longleftrightarrow\lambda
\in\mathbb{R},\text{ }(\lambda,F(\lambda))\in S(2),
\end{equation}
where $S(2)=\left\{  \lambda\in\mathbb{C}:-2\leq\operatorname{Im}\lambda
\leq2\right\}  .$ Moreover, it is well known [1] that
\begin{equation}
\lim_{\lambda\rightarrow-\infty}F(\lambda)=\infty.
\end{equation}
We also use the following well-known [1] asymptotic formulas for the
eigenvalues $\lambda_{2n}(0)$ and $\lambda_{2n+1}(0)$ lying in the disks
$D_{2c}((2n)^{2})$
\begin{equation}
\lambda_{2n}(0)=(2n)^{2}+O(n^{-1}),\text{ }\lambda_{2n+1}(0)=(2n)^{2}%
+O(n^{-1}).
\end{equation}

\begin{theorem}
$(a)$ If $\mu$ is a smallest real periodic eigenvalue, then the part of the
spectrum of $H(c)$ lying in the half-plane $\left\{  \lambda\in\mathbb{C}%
:\text{ }\operatorname{Re}\lambda<\mu\right\}  $ is nonreal.

$(b)$ Let $n=n_{1}+2,$ where $n_{1}$ is defined in Theorem 3$(a).$
Then\textit{ the part of }$\sigma(H(c))\cap\mathbb{R}$ lying in the half-space
$P(n)=:\left\{  \lambda:\operatorname{Re}\lambda>\lambda_{2n}(0)\right\}  $
\ consist of the intervals%
\begin{equation}
I_{n+1}=\left[  \lambda_{2n+1}(0),\text{ }\lambda_{2n+2}(0)\right]  ,\text{
}I_{n+2}=\left[  \lambda_{2n+3}(0),\text{ }\lambda_{2n+4}(0)\right]  ,...
\end{equation}
The \textit{gaps in the real part of the spectrum} \textit{ }lying in $P(n)$
are the intervals
\begin{equation}
\left(  \lambda_{2n}(0),\lambda_{2n+1}(0)\right)  ,\text{ }\left(
\lambda_{2n+2}(0),\lambda_{2n+3}(0)\right)  ,....
\end{equation}

\end{theorem}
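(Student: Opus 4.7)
Part (a) admits a short intermediate value argument. By Summary 8(c), $F$ is real-valued on $\mathbb{R}$, and by (22) the real part of the spectrum equals $\{\lambda\in\mathbb{R}:F(\lambda)\in[-2,2]\}$. If some $\lambda_0<\mu$ belonged to this set, then $F(\lambda_0)\leq 2$; since $F(\lambda)\to+\infty$ as $\lambda\to-\infty$ by (23) and $F$ is continuous, the intermediate value theorem produces $\lambda_1\leq\lambda_0$ with $F(\lambda_1)=2$, i.e., a real periodic eigenvalue strictly below $\mu$, contradicting its minimality.

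For part (b) I would first locate the relevant $2$-periodic eigenvalues. By Theorem 3(a) and Notation 1, the real periodic eigenvalues strictly above $\lambda_{2n}(0)$ form the simple increasing sequence $\lambda_{2n+1}(0)<\lambda_{2n+2}(0)<\cdots$, with each same-disk pair $\lambda_{2k}(0)<\lambda_{2k+1}(0)$ (for $k\geq n_1+1$) sitting inside $D_{2c}((2k)^2)$. Next, no real antiperiodic eigenvalue lies in $P(n)$: by Theorem 3(b), the antiperiodic eigenvalues outside the rectangle (20) are nonreal, so it suffices to show that (20) lies to the left of $\lambda_{2n}(0)$; since $\lambda_{2n}(0)\geq(2n)^2-2c$ and (20) satisfies $\operatorname{Re}\lambda\leq(2n_2+1)^2+2c$, this reduces to the elementary inequality $(2n)^2-(2n_2+1)^2>4c$, which one checks by cases from $n=n_1+2$ using the relation $n_1\in\{n_2,n_2+1\}$. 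As a consequence, $F+2$ has no real zeros on the connected set $(\lambda_{2n}(0),\infty)$ and, since $F+2=4$ at any $\lambda_{2k+1}(0)$, is positive throughout, so the real-spectrum condition $F\in[-2,2]$ from (22) reduces to $F\leq 2$ on $P(n)\cap\mathbb{R}$. Moreover $F-2$ is real analytic with zeros in $(\lambda_{2n}(0),\infty)$ exactly at the simple eigenvalues $\lambda_{2n+1}(0),\lambda_{2n+2}(0),\ldots$, so $F-2$ has constant sign on each open interval $(\lambda_{2k}(0),\lambda_{2k+1}(0))$ (same-disk gap) and $(\lambda_{2k+1}(0),\lambda_{2k+2}(0))$ (between-disk interval) and strictly alternates across each simple zero.

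The crux is pinning the sign on one interval, after which alternation dictates the rest. I would use the classical Hill-discriminant asymptotic $F(\lambda)=2\cos(\pi\sqrt{\lambda})+o(1)$ as $\lambda\to+\infty$, available from [1] and consistent with (24): evaluating at $\lambda=(2k+1)^2$ gives $F\to-2$, so $F-2<0$ for all sufficiently large $k$. Since $\lambda_{2k+1}(0)\leq(2k)^2+2c$, $\lambda_{2k+2}(0)\geq(2k+2)^2-2c$, and $4k+1>2c$ for $k\geq n$, the point $(2k+1)^2$ lies inside $(\lambda_{2k+1}(0),\lambda_{2k+2}(0))$ for all large $k$, fixing $F-2<0$ on that between-disk interval. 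Propagating by alternation downward to every $k\geq n$ yields $F>2$ on each same-disk gap $(\lambda_{2k}(0),\lambda_{2k+1}(0))$ and $F<2$ on each between-disk interval $(\lambda_{2k+1}(0),\lambda_{2k+2}(0))$, which combined with the reduction $F\leq 2\Leftrightarrow$ real spectrum from the previous step gives exactly (25) and (26). An alternative sign-fixing route would be a continuity-in-$c$ argument from $c=0$, where $F_0-2=-4\sin^2(\pi\sqrt{\lambda}/2)\leq 0$ has only double zeros at $(2k)^2$: Theorem 3(a) forbids the local extremum of $F_c-2$ between $\lambda_{2k}(0)$ and $\lambda_{2k+1}(0)$ from crossing $0$ throughout $c\in[0,c_{\text{current}}]$ once $k>n_1(c_{\text{current}})$, so the extremum stays positive and simplicity for $c>0$ excludes equality.
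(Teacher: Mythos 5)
Your argument is correct, and for part $(a)$ it coincides with the paper's: both exploit $F(\lambda)\to+\infty$ as $\lambda\to-\infty$ together with (22) to conclude that the leftmost real point of the spectrum must be a periodic eigenvalue. For part $(b)$ the skeleton is also the same as the paper's -- you show (as the paper does via (28)) that no real antiperiodic eigenvalue lies in $\overline{P(n)}$, so on $[\lambda_{2n}(0),\infty)$ the graph of $F$ can only enter and leave the strip $S(2)$ through the line $y=2$ at the simple real periodic eigenvalues, whence bands and gaps alternate. The genuine difference is in how the alternation is anchored, i.e.\ how one decides which family of intervals consists of bands and which of gaps. The paper settles this by invoking Theorem 3$(a)$ of [20] (the lengths of the real spectral intervals tend to infinity) combined with the asymptotics (24), which force the bands to be the long between-disk intervals. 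You instead evaluate the classical discriminant asymptotic $F(\lambda)=2\cos(\pi\sqrt{\lambda})+o(1)$ at the odd squares $(2k+1)^{2}$, check that these points lie in the between-disk intervals for $k\geq n$, and conclude $F<2$ there for large $k$, after which the sign alternation of $F-2$ across its simple zeros propagates the conclusion down to $k=n$. Your route is somewhat more self-contained, resting only on the standard asymptotic of the Hill discriminant from [1] rather than on the author's earlier theorem about interval lengths; the paper's route avoids having to verify that $(2k+1)^{2}$ sits inside the correct interval and that simple periodic eigenvalues are simple zeros of $F-2$ (a standard fact, which you should state explicitly since the strict sign alternation depends on it, though the paper's ``alternating points of entry and exit'' rests on the same fact). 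Your secondary suggestion of a deformation-in-$c$ argument from the self-adjoint case $c=0$ is plausible but sketchier and would need the non-crossing of the local extremum to be justified carefully; it is not needed given your main argument.
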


\begin{proof}
$(a)$ It readily follows from (23) that the leftist point $\left(
\lambda,F(\lambda)\right)  $ of the intersection of the graph $G(F)=:\left\{
(\lambda,F(\lambda)):\lambda\in\mathbb{R}\right\}  $ with the strip $S(2)$
lies in the line $y=2,$ that is, $\lambda$ is a smallest real periodic
eigenvalue $\mu$. Therefore by (22) $\mu$ is the smallest point of
$\operatorname{Re}\left(  \sigma(H(c)\right)  .$ It means that the half-plane
$\left\{  \lambda\in\mathbb{C}:\text{ }\operatorname{Re}\lambda<\mu\right\}  $
may contain only the nonreal part of $\sigma(H(c)).$

$(b)$ Since $\lambda_{2n}(0),$ $\lambda_{2n+1}(0),...$ are real and simple
(see Notation 1) the intersection points of $G(F)$ and the line $y=2$ lying
the half-plane $\overline{P(n)}:=\left\{  \lambda:\operatorname{Re}\lambda
\geq\lambda_{2n}(0)\right\}  $ are
\begin{equation}
(\lambda_{2n}(0),2),\text{ }(\lambda_{2n+1}(0),2),\text{ }(\lambda
_{2n+2}(0),2),\text{ }(\lambda_{2n+3}(0),2),\text{ }(\lambda_{2n+4}(0),2),....
\end{equation}
On the other hand, it follows from Theorem 3$(b)$\textbf{ }that all
antiperiodic eigenvalues lying in the half-plane $\overline{P(n)}$ are nonreal
numbers due to the following. Since $\lambda_{2n}(0)$ is the eigenvalue lying
in the disk $D_{2c}((2n)^{2}),$ it readily follows from the definition of $n$
and $n_{1}$ that $2n>c+3$ and $\lambda_{2n}(0)>c^{2}+4c+9.$ If the
antiperiodic eigenvalue $\lambda(1)$ is a real, then by Theorem 3$(b)$ it
belong to the rectangle (20) and hence $\lambda(1)\leq\left(  2n_{2}+1\right)
^{2}+2c\leq\left(  c+1\right)  ^{2}+2c=c^{2}+4c+1<\lambda_{2n}(0).$ That is
why the antiperiodic eigenvalues lying in $\overline{P(n)}$ are nonreal. It
means that $G(F)$ does not intersect the line $y=-2$ in the later half-plane.
Thus we have
\begin{equation}
F(\lambda)>-2,\text{ }\forall\lambda\in\left\{  \lambda\in\mathbb{R}:\text{
}\lambda\geq\lambda_{2n}(0)\right\}  .
\end{equation}
These arguments imply that in $\overline{P(n)}$ the graph $G(H)$ may get in
and out of the strip $S(2)$ at the points (27) called respectively the points
of entry and exit. It is clear that if $\left(  \lambda_{m}(0),2\right)  $ is
a point of entry (point of exit) then $\left(  \lambda_{m+1}(0),2\right)  $ is
the point of exit (point of entry). Thus the points of entry and exit are
alternating points. It implies that if $\left[  \lambda_{m}(0),\text{ }%
\lambda_{m+1}(0)\right]  $ is the interval of $\sigma(H(c))\cap\mathbb{R}$,
then $\left(  \lambda_{m+1}(0),\lambda_{m+2}(0)\right)  $ is the gap in
$\sigma(H(c))\cap\mathbb{R}$. On the other hand, by Theorem 3$(a)$ of [20], in
the neighborhood of infinity the lengths of the intervals of $\sigma
(H(c))\cap\mathbb{R}$ approach infinity. Thus, taking into account that the
lengths of the intervals in (25) and (26) respectively approach infinity and
zero (see (24)), we get the proof of $(b)$
\end{proof}

Now using the graph of $F$ we obtain the following result.

\begin{proposition}
Suppose that $\lambda_{m}(t)$ is a simple eigenvalue for all $t\in
\lbrack0,t_{m})$ and $\lambda_{m}(0)$ is a real number. If $\lambda_{m}(0)$ is
a point of entry (point of exit) then $\lambda_{m}(t_{m})>\lambda_{m}(0)$
($\lambda_{m}(t_{m})<\lambda_{m}(0)$) and $\left[  \lambda_{m}(0),\lambda
_{m}(t_{m})\right]  \subset\Gamma_{m}$ ($\left[  \lambda_{m}(t_{m}%
),\lambda_{m}(0)\right]  \subset\Gamma_{m}$).
\end{proposition}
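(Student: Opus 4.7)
The plan is to read off the direction in which $\lambda_m(t)$ leaves $\lambda_m(0)$ from the local behavior of the Hill discriminant $F$ near $\lambda_m(0)$, and then propagate that sign across all of $[0,t_m]$ using the characteristic equation.

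First, by Theorem 2$(a)$ the hypotheses $\lambda_m(0)\in\mathbb{R}$ and $\lambda_m(t)$ simple on $[0,t_m)$ force $\lambda_m(t)\in\mathbb{R}$ for every $t\in[0,t_m]$; by Remark 1 the map $t\mapsto\lambda_m(t)$ is continuous on $[0,t_m]$; and by (4) it satisfies $F(\lambda_m(t))=2\cos\pi t$ there.

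Next I would unpack the meaning of \emph{point of entry} / \emph{point of exit} as used in the proof of Theorem 5$(b)$: at an entry point there is a neighborhood $U$ of $\lambda_m(0)$ on which $F(\lambda)>2$ for $\lambda<\lambda_m(0)$ and $F(\lambda)\leq 2$ for $\lambda>\lambda_m(0)$ (with left and right swapped at an exit point). For small $t>0$ we have $F(\lambda_m(t))=2\cos\pi t<2$, while by continuity $\lambda_m(t)$ is close to $\lambda_m(0)$. The strict inequality $F>2$ on the left side of $\lambda_m(0)$ in $U$ rules out $\lambda_m(t)<\lambda_m(0)$, so $\lambda_m(t)>\lambda_m(0)$ in the entry case; the mirrored argument gives $\lambda_m(t)<\lambda_m(0)$ in the exit case.

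To upgrade this local sign to the entire interval $(0,t_m]$, I would argue by contradiction: if $\lambda_m(t^{*})=\lambda_m(0)$ for some $t^{*}\in(0,t_m]$, then $2\cos\pi t^{*}=F(\lambda_m(t^{*}))=F(\lambda_m(0))=2$, which forces $t^{*}=0$. Hence $\lambda_m(t)\neq\lambda_m(0)$ throughout $(0,t_m]$, and combined with continuity and the sign established for small $t>0$ this yields $\lambda_m(t)>\lambda_m(0)$ on all of $(0,t_m]$ in the entry case (and $<$ in the exit case); in particular $\lambda_m(t_m)>\lambda_m(0)$ (resp.\ $<$). The inclusion $[\lambda_m(0),\lambda_m(t_m)]\subset\Gamma_m$ then follows from the intermediate value theorem applied to the continuous real-valued function $\lambda_m$ on $[0,t_m]$, since its image lies in $\Gamma_m=\{\lambda_m(t):t\in(-1,1]\}$ by definition of the band.

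The main obstacle is the second step: cleanly formalizing the local behavior of $F$ at a point of entry/exit in order to pin down the sign of $\lambda_m(t)-\lambda_m(0)$ for small $t>0$. Once this sign is in hand, the scalar equation $F(\lambda_m(t))=2\cos\pi t$ together with elementary continuity and the IVT forces the remaining conclusions.
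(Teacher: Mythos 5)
Your proposal is correct, and its skeleton matches the paper's: both first invoke Theorem 2$(a)$ to get $\lambda_m(t)\in\mathbb{R}$ on all of $[0,t_m]$, and both then derive the ordering of $\lambda_m(0)$ and $\lambda_m(t_m)$ from the entry/exit dichotomy. The differences are in how the two remaining steps are discharged. For the direction, the paper simply asserts ``since $\lambda_m(0)$ is a point of entry we have $\lambda_m(t_m)>\lambda_m(0)$,'' whereas you actually prove it: you use $F'(\lambda_m(0))\neq 0$ (simplicity) to pin down the local sign of $F-2$ on each side of $\lambda_m(0)$, deduce $\lambda_m(t)>\lambda_m(0)$ for small $t>0$ from $F(\lambda_m(t))=2\cos\pi t<2$, and then propagate the sign over $(0,t_m]$ via the observation that $2\cos\pi t=2$ forces $t=0$. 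This is exactly the content the paper leaves implicit, and your write-up is the more complete one. For the interval inclusion, the paper appeals to Summary 8$(a)$ (the PT-symmetry fact from [20] that two real points of a band bound a real subinterval of that band), while you use the intermediate value theorem on the continuous real-valued function $\lambda_m$ on $[0,t_m]$; your route is more elementary and self-contained, and it is legitimate precisely because reality of $\lambda_m(t)$ on the whole of $[0,t_m]$ has already been secured in the first step, so nothing is lost by avoiding the imported result. Both arguments are sound; yours trades one citation for a short connectedness argument.
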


\begin{proof}
By Theorem 2$(a)$ $\lambda_{m}(t)$ is a real eigenvalue for all $t\in
\lbrack0,t_{m}].$ Since $\lambda_{m}(0)$ is a point of entry we have
$\lambda_{m}(t_{m})>\lambda_{m}(0)$ and by Summary 8$(a),$ $\left[
\lambda_{m}(0),\lambda_{m}(t_{m})\right]  \subset\Gamma_{m}$. In the same way
we prove the statement for the point of exit. The proposition is proved.
\end{proof}

The following remarks which readily follows from (23) will be used in the next theorems.

\begin{remark}
Let $(A,2)$ and $(B,2)$ be respectively the neighboring point of entry and
point of exit of the graph $G(F)$ defined in the proof of Theorem 5$(a)$ such
that $A$ and $B$ are the simple periodic eigenvalues and $A<B.$ Then there
exists $\varepsilon>0$ such that $F^{^{\prime}}(\lambda)<0$ and $F^{^{\prime}%
}(\lambda)>0$ respectively for $\lambda\in\left[  A-\varepsilon,A+\varepsilon
\right]  $ and $\lambda\in\left[  B-\varepsilon,B+\varepsilon\right]  .$
Assume that $F^{^{\prime\prime}}(\lambda)\neq0$ for all $\lambda\in\left[
A-\varepsilon,B+\varepsilon\right]  .$ Let $\mu_{1}$ and $\mu_{2}$ be
respectively the largest and smallest points of the interval $\left[
A+\varepsilon,B-\varepsilon\right]  $ such that $F^{^{\prime}}(\lambda)<0$ and
$F^{^{\prime}}(\lambda)>0$ for $\lambda\in\left[  A,\mu_{1}\right]  $ and
$\lambda\in\left[  \mu_{2},B\right]  .$ It is clear that both $\mu_{1}$ and
$\mu_{2}$ are the local minimum points. There are two cases:

\textbf{Case 1. }$\mu_{1}<\mu_{2}.$ Then $F$ has at least two local minimum
points $\mu_{1}$ and $\mu_{2}.$

\textbf{Case 2. }$\mu_{1}=\mu_{2}.$ Then $F$ has only one local minimum point
$\mu_{1}$ and it decreases and increases on $\left(  A,\mu_{1}\right)  $ and
$\left(  \mu_{1},B\right)  $ respectively.
\end{remark}

\begin{remark}
Since the differential equation
\begin{equation}
-y^{^{\prime\prime}}(x)+\left(  2ic\cos2x\right)  y(x)=\lambda y(x)
\end{equation}
analytically depend on the real parameters $c$ and $\lambda,$ its solutions
$\theta$ and $\varphi$ and hence the Hill discriminant $F(\lambda
,c):=\varphi^{\prime}(\pi,\lambda,c)+\theta(\pi,\lambda,c)$ analytically
depend on $\lambda$ and $c$. Moreover $F(\lambda,c)$ and its derivatives
$F^{^{\prime}}(\lambda,c)$ and $F^{^{\prime\prime}}(\lambda,c)$ with respect
to $\lambda$ continuously depend on the pair $(\lambda,c).$ If $F^{^{\prime}%
}(\lambda,c_{0})=0$ and $F^{^{\prime\prime}}(\lambda,c_{0})\neq0$ then by
implicit function theorem there exists $\varepsilon>0$ and differentiable
function $\lambda(\cdot)$ such that $F^{^{\prime}}(\lambda(c),c)=0$ for all
$c\in U(c_{0},\varepsilon).$
\end{remark}

Now we are ready to prove the main result about the spectrum of$\ H(c)$
($L(V$) for $c\in\lbrack2,\infty)$ ($V\geq\sqrt{5}/2).$ Note that the case
$c\in(0,2)$ was considered in detail in Theorems 6-8 of [22]. Therefore we
assume that $c\geq2.$

\begin{theorem}
For each $k>n_{3}$ the following statements about the \textit{bands}%
$\ \Gamma_{2k-1}$\textit{ } and $\Gamma_{2k}$ of the spectrum of$\ H(c)$ hold,
where $n_{3}$ defined in Theorem 4.

$\left(  a\right)  $ There exists unique $t_{k}\in(0,1)$ such that
$\lambda_{2k-1}(t_{k})$ is a multiple real eigenvalue lying in $I_{k}=\left[
\lambda_{2k-1}(0),\text{ }\lambda_{2k}(0)\right]  $. Its multiplicity is $2$
and $\lambda_{2k-1}(t_{k})=\lambda_{2k}(t_{k}).$

$\left(  b\right)  $ T\textit{he real parts of the bands}$\ \Gamma_{2k-1}%
$\textit{ } and $\Gamma_{2k}$ \textit{are respectively the}

\textit{ intervals }$\left\{  \lambda_{2k-1}(t):t\in\lbrack0,t_{k}]\right\}
=$ $\left[  \lambda_{2k-1}(0),\lambda_{2k-1}(t_{k})\right]  $ \textit{and }

\textit{\ }$\left\{  \lambda_{2k}(t):t\in\lbrack0,t_{k}]\right\}  =$ $\left[
\lambda_{2k}(t_{k}),\lambda_{2k}(0)\right]  .$

The\textit{ nonreal parts of }$\Gamma_{2k-1}$\textit{ } and $\Gamma_{2k}$ are
respectively\textit{ the curves}

\textit{ }$\left\{  \lambda_{2k-1}(t):t\in(t_{k},1]\right\}  $ and $\left\{
\lambda_{2k}(t):t\in(t_{k},1]\right\}  .$
\end{theorem}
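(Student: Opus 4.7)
The plan is to combine the graph analysis of the Hill discriminant $F$ on $I_k$ with the ``trapping'' provided by Summary 7(a) and the rectangle bookkeeping in the proof of Theorem 4.

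First I would use Theorem 5(b) to read off that $\lambda_{2k-1}(0)$ and $\lambda_{2k}(0)$ are a point of entry and a point of exit of the graph $G(F)$ into $S(2)$, since $I_k=[\lambda_{2k-1}(0),\lambda_{2k}(0)]$ is an interval of $\sigma(H(c))\cap\mathbb{R}$ bordered by gaps on both sides. Proposition 2 then gives the initial motion: as $t$ leaves $0$, $\lambda_{2k-1}(t)$ is real and moves strictly to the right into $I_k$, while $\lambda_{2k}(t)$ is real and moves strictly to the left into $I_k$.

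The key step is to show that for every $t\in(0,1)$ the real eigenvalues of $H_t(c)$ lying in $I_k$ are contained in $\{\lambda_{2k-1}(t),\lambda_{2k}(t)\}$. For $t\in(0,1/2]$ the rectangle $R_{k-1}(t)$ of Theorem 4 is the only one of its family that meets $I_k$ from the left and contains $\lambda_{2k-1}(t)$, and $R_k(t)$ is the only one that meets $I_k$ from the right and contains $\lambda_{2k}(t)$; the remaining eigenvalues in each rectangle, namely $\lambda_{2k-2}(t)$ and $\lambda_{2k+1}(t)$, cannot lie in $I_k$ while real, because by Summary 7(a) they would have to cross the endpoints $\lambda_{2k-1}(0)$ or $\lambda_{2k}(0)$, and these are forbidden eigenvalues of $H_t(c)$ for $t>0$. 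An analogous argument with the $t\in(1/2,1)$ rectangle configuration covers the remaining range. Combining this cap with Remark 2 forces Case 2: $F$ restricted to $I_k$ has a unique local minimum $\mu_k$, is strictly decreasing on $[\lambda_{2k-1}(0),\mu_k]$ and strictly increasing on $[\mu_k,\lambda_{2k}(0)]$; otherwise Case 1 would give, for some $y$ slightly below $2$, four real roots of $F(\lambda)=y$ in $I_k$ and hence four real eigenvalues of $H_t(c)$ at $t$ with $2\cos\pi t=y$, contradicting the bound.

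Given Case 2 I would invert $F$ on each monotone piece and define $t_k\in(0,1)$ by $2\cos\pi t_k=F(\mu_k)$. The continuation agrees with the labelling: $\lambda_{2k-1}(\cdot)$ sweeps $[\lambda_{2k-1}(0),\mu_k]$ bijectively as $t$ runs over $[0,t_k]$, while $\lambda_{2k}(\cdot)$ sweeps $[\mu_k,\lambda_{2k}(0)]$. At $t=t_k$ they coincide at $\mu_k$, of multiplicity exactly $2$ by Theorem 4(b) and the coincidence. For $t>t_k$ the equation $F(\lambda)=2\cos\pi t$ has no real root in $I_k$, and since Summary 7(a) blocks either band from crossing the endpoints of $I_k$ while real, both $\lambda_{2k-1}(t)$ and $\lambda_{2k}(t)$ must be nonreal; Summary 8(b) together with continuity of the labelling then forces them to be complex conjugates on $(t_k,1]$. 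This yields part (a) and the real/nonreal decomposition of $\Gamma_{2k-1}$ and $\Gamma_{2k}$ claimed in part (b).

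I expect the main obstacle to be making the ``at most two real eigenvalues of $H_t(c)$ in $I_k$'' claim fully rigorous across the whole range $t\in(0,1)$, because the rectangle bookkeeping of Theorem 4 changes shape at $t=1/2$ and one must verify carefully, via continuity of $\lambda_j(\cdot)$ and the numbering convention, that the two eigenvalues inside each $I_k$-meeting rectangle are genuinely the continuations $\lambda_{2k-1}(t)$ and $\lambda_{2k}(t)$ rather than intrusions from distant bands. Once this localisation is in hand, the remainder is a routine inversion of a strictly monotone analytic function combined with the PT-symmetry in Summary 8(b).
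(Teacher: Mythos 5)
Your existence argument (Rolle/minimum of $F$ on $I_k$, then $F(\mu_k)=2\cos\pi t_k$ with $t_k\in(0,1)$ because the interior of $I_k$ contains no $2$-periodic eigenvalues, double multiplicity from Theorem 4(b)) and your argument for part (b) (trapping via Summary 8(a) combined with Summary 7(a),(c) to exclude real points of $\Gamma_{2k-1},\Gamma_{2k}$ for $t>t_k$) are essentially identical to the paper's. Where you genuinely diverge is the uniqueness step. You try to prove directly that for every $t\in(0,1)$ the operator $H_t(c)$ has at most two real eigenvalues in the whole interval $I_k$, by arguing that only the continuations $\lambda_{2k-1}(t),\lambda_{2k}(t)$ can enter $I_k$, and then rule out Case 1 of Remark 2. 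The paper instead localizes in the $\lambda$-variable rather than in the band index: it runs a continuation argument in the coupling constant $c$ (uniqueness holds for small $c$ by [22]; by the implicit function theorem the set of $c$ where uniqueness fails is open; at the threshold $c_0$ the unique critical point $\mu_0$ of $F(\cdot,c_0)$ forces any two critical points for nearby $c$ into the length-$2$ interval $(\mu_0-1,\mu_0+1)$), and then applies Theorem 4(a) on an interval of length at most $4$ to get the three-eigenvalue contradiction. This matters because $I_k$ has length of order $8k$, so Theorem 4(a) cannot be applied to $I_k$ itself.

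Your route is attractive because it avoids the continuation in $c$ altogether, but as written it has a gap exactly where you flag it. The exclusion of $\lambda_{2k-2}(t)$ and $\lambda_{2k+1}(t)$ via Summary 8(a) and the simplicity of the endpoints is fine, but you must also exclude intrusions by every other band, including bands $\Gamma_j$ with small $j$ whose eigenvalues $\lambda_j(0)$ may be nonreal and are only confined to the rectangle (19); for those, Summary 8(a) gives you nothing at $t=0$, and the rectangle bookkeeping of Theorem 4 changes its pairing of disks at $t=1/2$, so the identification of the two eigenvalues in each $I_k$-meeting rectangle with specific indices has to be re-established on $(1/2,1)$. Until that lemma is proved for all $t$, Case 1 of Remark 2 is not excluded. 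A secondary imprecision: ``for some $y$ slightly below $2$, four real roots'' is not what Case 1 gives; the correct count (and the one the paper uses) is three roots counting multiplicity at the level $y=F(\mu_1)$, where $\mu_1$ is the larger of the two local minima, since the intermediate local maximum need not reach $2$.
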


\begin{proof}
$\left(  a\right)  $ First we prove that there exists multiple real eigenvalue
lying in the interval $I_{k}.$ Since the endpoints of the interval $I_{k}$ are
periodic eigenvalues we have $F\left(  \lambda_{2k-1}(0)\right)  =F\left(
\lambda_{2k}(0)\right)  =2.$ Therefore by the Roll's Theorem there exists
$\lambda\in\left(  \lambda_{2k-1}(0,c),\lambda_{2k}(0,c)\right)  $ such that
$F^{^{\prime}}(\lambda)=0.$ Moreover by Theorem 5 and Theorem 3 $\ \left(
\lambda_{2k-1}(0,c),\lambda_{2k}(0,c)\right)  $ is the interval of the real
part of the spectrum and does not contain the periodic and antiperiodic
eigenvalues. Note that for $c\geq2$ we have $n_{3}\geq n_{1}+2$ and
$n_{3}>n_{2}.$ That is why we can use Theorem 5 and Theorem 3. Therefore we
have $F(\lambda)=2\cos\pi t_{k}$ for some $t_{k}\in(0,1).$ Hence $\lambda$ is
a multiple eigenvalue of $H_{t_{k}}(c)$ lying in the interior of $I_{k}.$
Moreover by Theorem 4$(b)$ $\lambda$ is a double eigenvalue and hence
$F^{^{\prime\prime}}(\lambda,c_{0})\neq0$. Thus we have proved that there
exist double eigenvalue lying in $I_{k}.$

Now we prove the uniqueness of the double eigenvalue lying in $I_{k}.$ Suppose
that it does not hold for some $c\geq2,$ that is, there exist at least two
points $\mu_{1}$ and $\mu_{2}$ in $I_{k}$ such that $F(\mu_{1},c)$ and
$F(\mu_{2},c)$ lie in the interval $(-2,2)$ and $F^{^{\prime}}(\mu_{1},c)=0$
and $F^{^{\prime}}(\mu_{2},c)=0.$ By Theorem 4$(b)$ we have $F^{^{\prime
\prime}}(\mu_{1},c)\neq0$ and $F^{^{\prime\prime}}(\mu_{2},c)\neq0.$ Therefore
by implicit function theorem the uniqueness does not hold in some open
neighborhood of $c.$ It implies that the uniqueness holds in the closed set.
Let $c_{0}$ be the largest number such that the uniqueness holds for $0<c\leq
c_{0}.$ If $\left\lfloor c_{0}+1/2\right\rfloor +2=k$ then the uniqueness is
proved. Otherwise, there exists $\alpha>0$ such that for $c\in\left(
c_{0},c_{0}+\alpha\right)  $ the operator $H(c)$ has at least two double Bloch
eigenvalues $\mu_{1}(c)$ and $\mu_{2}(c).$ Since $F^{^{\prime}}(\lambda
,c_{0})$ is a continuous function and it has unique zero $\mu_{0}$ in the
interval $\left[  A,B\right]  ,$ where $A=\lambda_{2k-1}(0)-\beta,$
$B=\lambda_{2k}(0)+\beta$ and $\beta$ is chosen so that $F^{^{\prime}}%
(\lambda,c_{0})\neq0$ for $\lambda\in\left[  A,\lambda_{2k-1}(0)\right]  \cup$
$\left[  \lambda_{2k}(0),B\right]  ,$ there exists $\varepsilon>0$ such that
$\left\vert F^{^{\prime}}(\lambda,c_{0})\right\vert >\varepsilon$ whenever
$\lambda$ belongs to the compact $\left[  A,B\right]  \backslash(\mu_{0}%
-1,\mu_{0}+1).$ Then using the uniform continuity of $F^{^{\prime}}$ with
respect to the pair $(\lambda,c)$ we obtain that there exists $\gamma>0$ such
that $\left\vert F^{^{\prime}}(\lambda,c)\right\vert >\varepsilon/2$ whenever
$c$ and $\lambda$ belongs to $\left[  c_{0},c_{0}+\gamma\right]  $ and
$\left[  A,B\right]  \backslash(\mu_{0}-1,\mu_{0}+1)$ respectively. Here
$\gamma$ can be chosen so that $\left[  \lambda_{2k-1}(0,c),\lambda
_{2k}(0,c)\right]  \subset\left[  A,B\right]  $ for all $c\in\left[
c_{0},c_{0}+\gamma\right]  $, since the periodic eigenvalues continuously
depend on $c.$ Therefore $H(c)$ has at least two multiple Bloch eigenvalue
lying in $(\mu_{0}-1,\mu_{0}+1).$ It means that \textbf{Case 1 \ }of Remark 2
holds. Thus $F$ has two local minimum points $\mu_{1}$ and $\mu_{2}$ in
$(\mu_{0}-1,\mu_{0}+1)$. Without loss of generality it can be assumed that
$F(\mu_{1})\geq F(\mu_{2}).$ Since, $\mu_{1}$ is a local minimum point there
exists $\mu_{3}\in\left(  \mu_{1},\mu_{2}\right)  $ such that $F(\mu
_{3})>F(\mu_{1}).$ The last two inequalities with the intermediate value
theorem for the continuous function $F$ imply that there exists $\mu_{4}%
\in\lbrack\mu_{2},\mu_{3})$ such that $F(\mu_{4})=F(\mu_{1})$ (see the graph
of $F$ between the first points of entry and exit). Then the points $\left(
\mu_{1},F(\mu_{1})\right)  $ and $\left(  \mu_{4},F(\mu_{4})\right)  $ of the
graph of $F$ belong to the horizontal line $y=F(\mu_{1})=2\cos t_{1}$ for some
$t_{1}\in(0,1)$. It implies that $\mu_{1}$ and $\mu_{4}$ are the eigenvalues
of the operator $H_{t_{1}}(c).$ Moreover, by definition $\mu_{1}$ is a double
eigenvalue. Thus $H_{t_{1}}(c)$ has at least three eigenvalue (counting the
multiplicity) on $[\mu_{1},\mu_{2}].$ It contradicts Theorem 4, since
$[\mu_{1},\mu_{2}]$ is a subinterval of the interval $(\mu_{0}-1,\mu_{0}+1)$
of the length $2.$ Thus we have proved that there exist unique multiple
eigenvalue lying in $I_{k}$ and it is a double eigenvalue.

$\left(  b\right)  $ By $\left(  a\right)  $ for $t\in\left[  0,1\right]
\backslash\left\{  t_{k}\right\}  $ the eigenvalues of the operator $H_{t}(c)$
lying in

$\left[  \lambda_{2k-1}(0,c),\lambda_{2k}(0,c)\right]  $ are simple and
$H_{t_{k}}(c)$ has a double eigenvalue $\lambda(t_{k})$ lying in that
interval. Now using it Theorem 2$(a)$ and Summary 8$(a)$ we conclude that the
intervals $\left[  \lambda_{2k-1}(0),\lambda_{2k-1}(t_{k})\right]  $
\textit{and }$\left[  \lambda_{2k}(t_{k}),\lambda_{2k}(0)\right]  $ belong
respectively to the bands$\ \Gamma_{2k-1}$\textit{ } and $\Gamma_{2k}.$

Now we prove that the sets $\left\{  \lambda_{2k-1}(t):t\in(t_{k}%
,\pi]\right\}  $ and $\left\{  \lambda_{2k}(t):t\in(t_{k},\pi]\right\}  $ has
no intersections points with the real line. If $\lambda_{2k-1}(t)\in
\mathbb{R}$ for some $t\in(t_{k},\pi],$ then by Summary 8$(a)$ the interval
$\left[  \lambda_{2k-1}(t_{k}),\lambda_{2k-1}(t)\right]  $ has common
subinterval with either $\left[  \lambda_{2k-1}(0),\lambda_{2k-1}%
(t_{k})\right]  $ \textit{or }$\left[  \lambda_{2k}(t_{k}),\lambda
_{2k}(0)\right]  $. It contradict either Summary 7$(a)$ or Summary 7$(c).$
\end{proof}

Now we prove some theorems which will be used in the next sections.

\begin{theorem}
For each $V\neq\pm1/2$ the geometric multiplicity of the periodic and
antiperiodic eigenvalues of $L(V)$ is $1.$
\end{theorem}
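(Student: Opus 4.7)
The plan is to argue by contradiction: suppose some periodic eigenvalue of $L_0(V)$ has geometric multiplicity $\geq 2$ (the antiperiodic case for $L_1(V)$ is analogous). Equivalently, the monodromy matrix of the Hill equation $-y''+qy=\lambda y$ at $x=\pi$ equals the identity $I$, so both fundamental solutions $\theta,\varphi$ are $\pi$-periodic and span a two-dimensional eigenspace in $L^{2}[0,\pi]$.

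Next I would expand any such periodic eigenfunction in the Fourier basis $\{e^{2inx}\}_{n\in\mathbb{Z}}$ of $L^{2}[0,\pi]$. With $q(x)=(1+2V)e^{2ix}+(1-2V)e^{-2ix}$, the eigenvalue equation translates into the three-term recursion
\[
(1-2V)c_{n+1}+(4n^{2}-\lambda)c_{n}+(1+2V)c_{n-1}=0,\qquad n\in\mathbb{Z},
\]
for the Fourier coefficients $(c_n)$. The hypothesis $V\neq\pm 1/2$ is used here in an essential way: both $1\pm 2V$ are nonzero, so $c_{n+1}$ is genuinely determined by $c_n,c_{n-1}$ (and vice versa in the opposite direction), and the formal solution space is two-dimensional, parametrized by any two consecutive values.

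The key step is a Poincar\'e--Perron asymptotic analysis of this recurrence. The transfer matrix sending $(c_{n-1},c_n)^{T}$ to $(c_n,c_{n+1})^{T}$ has constant determinant $(1+2V)/(1-2V)$ and trace $\sim -4n^{2}/(1-2V)$ as $|n|\to\infty$, so its two eigenvalues split into one of order $n^{2}$ and one of order $n^{-2}$. Consequently the two independent formal solutions grow and decay at factorial rates, and the subspace of formal solutions decaying at $+\infty$ is one-dimensional, and likewise at $-\infty$. The Fourier coefficients of any $L^{2}[0,\pi]$ function lie in $\ell^{2}(\mathbb{Z})$ and must decay at both ends, so every periodic eigenfunction of $L_0(V)$ corresponds to a vector in the intersection of these two one-dimensional subspaces of the two-dimensional formal solution space. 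This intersection is at most one-dimensional, contradicting the assumed two-dimensional eigenspace.

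The antiperiodic case is identical, using the Fourier basis $\{e^{i(2n+1)x}\}_{n\in\mathbb{Z}}$ and the recursion with $(2n+1)^{2}$ in place of $4n^{2}$. The main obstacle is making the Poincar\'e--Perron asymptotics rigorous, but this is standard for three-term recurrences with nonvanishing outer coefficients, which is precisely what $V\neq\pm 1/2$ guarantees. A compact alternative is to introduce the substitution $c_n=\alpha^{n}w_n$ with $\alpha=ic/(1-2V)$ and $c=\sqrt{4V^{2}-1}$, which transforms the $L_0(V)$ recursion into the analogous recursion for $H_0(c)$; since $\ell^{2}$ solutions decay super-exponentially, this substitution descends to a linear bijection between the $\ell^{2}$ eigenspaces of $L_0(V)$ and $H_0(c)$ at $\lambda$, and Summary 6 then directly yields geometric multiplicity $1$.
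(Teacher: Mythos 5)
Your proposal is correct, and your primary route is genuinely different from the paper's. The paper argues by observing that $(1+2V)e^{2ix}+(1-2V)e^{-2ix}=2ic\cos(2x+\alpha)$ with $c=\sqrt{4V^{2}-1}$ and a \emph{complex} shift $\alpha=-\frac{\pi}{2}+\frac{i}{2}\ln\frac{2V-1}{2V+1}$; since $\theta,\varphi$ extend to entire functions of the variable, a two-dimensional periodic eigenspace for $L_{0}(V)$ would transport under $x\mapsto x-\alpha$ to two independent periodic solutions for $H_{0}(c)$, contradicting Summary 6. Your ``compact alternative'' is exactly this argument seen on the Fourier side: the diagonal conjugation $c_{n}=\alpha^{n}w_{n}$ with $\alpha=ic/(1-2V)$ (note $(1+2V)\alpha^{-1}=(1-2V)\alpha=ic$) is the Fourier transform of translation by a complex constant, and your super-exponential-decay remark is the counterpart of the paper's appeal to entirety of the solutions; so that variant is essentially the paper's proof and still leans on Summary 6. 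Your main route, via the three-term recursion $(1-2V)c_{n+1}+(4n^{2}-\lambda)c_{n}+(1+2V)c_{n-1}=0$ and the Perron--Kreuser dichotomy (one dominant, one minimal solution at each end, since the middle coefficient grows like $n^{2}$ while the outer ones are constant and nonzero precisely because $V\neq\pm1/2$), is self-contained: it proves directly that the $\ell^{2}$ solution space is at most one-dimensional without invoking the external multiplicity result for $H(c)$. What the paper's approach buys is brevity by reusing [17]; what yours buys is independence from that reference and a transparent use of the hypothesis $V\neq\pm1/2$. The only points to make rigorous are standard: linearly independent eigenfunctions give linearly independent $\ell^{2}$ coefficient sequences, and any solution not proportional to the minimal one at $+\infty$ (resp. $-\infty$) grows factorially and so cannot be $\ell^{2}$.
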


\begin{proof}
Suppose two the contrary that the geometric multiplicity of the periodic
eigenvalue $\lambda_{n}(0)$ of $L(V)$ is $2.$ Then both solution
$\varphi(x,\lambda_{n}(a))$ or $\theta(x,\lambda_{n}(a))$ of the equation%
\begin{equation}
-y^{^{\prime\prime}}(x)+(1+2V)e^{i2x}+(1-2V)e^{-i2x}y(x)=\lambda y(x)
\end{equation}
is a periodic function. On the other hand, one easily verify that%
\begin{equation}
(1+2V)e^{i2x}+(1-2V)e^{-i2x}=2ic\cos(2x+\alpha),
\end{equation}
where $c=\sqrt{4V^{2}-1}$ and $\alpha=-\frac{\pi}{2}+\frac{i}{2}\ln\frac
{2V-1}{2V+1}.$ Note that formula (31) was found in [9], where instead of the
function $\ln$ the function $\tanh^{-1}$ was used. Moreover, the solutions
$\varphi(z,\lambda_{n}(0))$ and $\theta(z,\lambda_{n}(0))$ of the equation
obtained from (30) by replacing the real variable $x$ with the complex
variable $z$ analytically depend on $z,$ since they are the entire function of
the potential $q(z)=(1+2V)e^{i2z}+(1-2V)e^{-i2z}$ and $q$ is the entire
function of $z.$ Using these arguments we conclude that $\varphi
(x-\alpha,\lambda_{n}(0))$ or $\theta(x-\alpha,\lambda_{n}(0))$ are the
linearly independent periodic solutions of $-y^{^{\prime\prime}}%
(x)+(2ic\cos2x)y(x)=\lambda y(x).$ It contradicts Summary 6. Thus the theorem
is proved for $\lambda_{n}(0)$. In the same way we prove it for $\lambda
_{n}(1).$
\end{proof}

In [22] we proved that (see Summary 1) if $c$ is a small number, then all
periodic eigenvalues are real numbers and simple eigenvalues and hence by
Notation 1 can be numbered in increasing order $\lambda_{1}(0,c)<\lambda
_{2}(0,c)<....$ and they satisfy the formulas
\begin{equation}
\lambda_{1}(0,c)=O(c),\text{ }\lambda_{2n}(0,c)=\left(  2n\right)
^{2}+O(c)\text{ , }\lambda_{2n+1}(0,c)=\left(  2n\right)  ^{2}+O(c)\text{ }%
\end{equation}
as $c\rightarrow0$ for $n=1,2,...$ Moreover, we proved that $\lambda_{1}(0,c)$
and $\lambda_{2}(0,c)$ are the PN and $\lambda_{3}(0,c)$ is the PD eigenvalue
(see Summary 9). It means that PN eigenvalues lying in $O(c)$ neighborhood of
$4$ is less than the PD eigenvalues lying in $O(c)$ neighborhood of $4.$ Now
we proof that for each $k=1,2,$..., the eigenvalues $\lambda_{4k+1}(0,c)$ and
$\lambda_{4k+2}(0,c)$ are the PN and $\lambda_{4k}(0,c)$ and $\lambda
_{4k+3}(0,c)$ are the PD eigenvalues (see Theorem 8). These results well be
used very much in the Section 5. To consider the PD and PN eigenvalues
$\lambda$ and $\mu$ lying in $O(a)$ neighborhood of $\left(  2n\right)  ^{2}$
we use the formulas%
\begin{align}
(\lambda-4)b_{1}  &  =ab_{2},\\
\text{ }(\lambda-(2k)^{2})b_{k}  &  =ab_{k-1}+ab_{k+1},
\end{align}%
\begin{align}
(\mu-4-\frac{2a^{2}}{\mu})a_{1}  &  =aa_{2},\\
\text{ }(\mu-(2k)^{2})a_{k}  &  =aa_{k-1}+aa_{k+1},
\end{align}
where $a=ic$, $c>0$ and without loss of generality, it can be assumed that
$b_{n}=a_{n}=1.$

\begin{theorem}
If $c$ is a small number and $k\in\mathbb{N},$ then $\lambda_{4k}(0,c)$ and
$\lambda_{4k+3}(0,c)$ are the PD and $\lambda_{4k+1}(0,c)$ and $\lambda
_{4k+2}(0,c)$ are the PN eigenvalues.
\end{theorem}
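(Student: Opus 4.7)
The plan is to compare the inward continued-fraction recursions implicit in the systems (33)--(34) for PD and (35)--(36) for PN, and to show that the splitting $\mu_{PN,n}-\lambda_{PD,n}$ between the two eigenvalues lying near $(2n)^{2}$ is of order $c^{2n}$ with sign $(-1)^{n}$. I would first introduce the backward ratios $S_{k}=b_{k-1}/b_{k}$ for PD and $\widetilde{S}_{k}=a_{k-1}/a_{k}$ for PN. For $k\geq 2$ both (34) and (36) give the \emph{same} recursion $S_{k+1}=a/(\lambda-(2k)^{2}-aS_{k})$, so the only difference between the two problems is encoded in the initial data: $S_{1}=0$ (the Dirichlet condition $b_{0}=0$) versus $\widetilde{S}_{1}=a/\lambda$ (coming from the $k=0$ equation $\lambda a_{0}=aa_{1}$, whose elimination is exactly what produces the $-2a^{2}/\mu$ term in (35)). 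Thus the boundary defect $\Delta_{1}:=\widetilde{S}_{1}-S_{1}=a/\lambda$ is of order $a$.

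Next I would propagate this defect up the ladder. Subtracting the recursions for $k\geq 2$ gives $\Delta_{k+1}=a^{2}\Delta_{k}/[(\lambda-(2k)^{2}-aS_{k})(\lambda-(2k)^{2}-a\widetilde{S}_{k})]$, while the $k=1$ step carries the extra factor of $2$ from (35), producing $\Delta_{2}=2a^{3}/[\lambda(\lambda-4)(\lambda-4-2a^{2}/\lambda)]$. For $\lambda$ close to $(2n)^{2}$ and $k<n$, both denominator factors reduce at leading order in $c$ to $((2n)^{2}-(2k)^{2})^{2}>0$, so a straightforward induction yields
\[
\Delta_{n}(\lambda)=\frac{2\,a^{2n-1}}{\lambda\prod_{k=1}^{n-1}(\lambda-(2k)^{2})^{2}}+O(a^{2n+1}),
\]
and the leading coefficient of $a^{2n-1}$ is a positive real number at $\lambda\approx(2n)^{2}$.

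Finally I would subtract the top-level equations $\lambda_{PD,n}-(2n)^{2}=aS_{n}+a/S_{n+1}$ and $\mu_{PN,n}-(2n)^{2}=a\widetilde{S}_{n}+a/\widetilde{S}_{n+1}$. The outward ratios $S_{n+1}(\lambda)$ and $\widetilde{S}_{n+1}(\lambda)$ come from the same recursion (36) plus the same ``decaying solution at infinity'' selection, hence coincide as functions of $\lambda$; the residual $a/\widetilde{S}_{n+1}(\mu)-a/S_{n+1}(\lambda)$ is controlled by $|\mu-\lambda|$ and is of strictly higher order than $a\Delta_{n}$. Evaluating at $\lambda=(2n)^{2}$ gives
\[
\mu_{PN,n}-\lambda_{PD,n}=\frac{2\,a^{2n}}{(2n)^{2}\prod_{k=1}^{n-1}((2n)^{2}-(2k)^{2})^{2}}+O(c^{2n+2}),
\]
and since $a^{2n}=(ic)^{2n}=(-1)^{n}c^{2n}$, the sign of the splitting is $(-1)^{n}$. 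Using Notation 1, which orders the two eigenvalues near $(2n)^{2}$ as $\lambda_{2n}(0,c)<\lambda_{2n+1}(0,c)$, I conclude: for $n=2k$ even the splitting is positive, so $\lambda_{4k}(0,c)$ is PD and $\lambda_{4k+1}(0,c)$ is PN; for $n=2k+1$ odd the splitting is negative, so $\lambda_{4k+2}(0,c)$ is PN and $\lambda_{4k+3}(0,c)$ is PD.

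The main obstacle is that this splitting surfaces only at order $c^{2n}$, after every lower-order perturbative correction (in particular the common $c^{2}$-term that both eigenvalues share for $n\geq 2$) has cancelled between PD and PN. The device of working directly with the defect $\Delta_{k}$ circumvents this, since $\Delta_{k}$ is driven entirely by the Dirichlet-vs-Neumann mismatch at $k=1$ and never mixes with the common part of the two expansions.
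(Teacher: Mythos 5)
Your proposal is correct and follows essentially the same route as the paper: both start from the recursion systems (33)--(36), identify the $-2a^{2}/\mu$ correction in the $k=1$ Neumann equation as the sole source of the PD/PN asymmetry, propagate it through the three-term recurrence to get the splitting $\mu-\lambda=2a^{2n}\bigl((2n)^{2}\prod_{k=1}^{n-1}((2n)^{2}-(2k)^{2})^{2}\bigr)^{-1}+O(c^{2n+2})$ (the paper's (47)), and read off the alternating sign from $a^{2n}=(-1)^{n}c^{2n}$. Your bookkeeping via the defect $\Delta_{k}$ of the backward continued-fraction ratios is a tidier packaging of the paper's term-by-term comparison of $S_{n-1}(\lambda)$ with $\widetilde{S}_{n-1}(\mu)$, but the underlying computation and conclusion are identical.
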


\begin{proof}
To consider the PD eigenvalue $\lambda$ lying in $O(c)$ neighborhood of
$(2n)^{2}$ for $n\geq2$ we iterate $\left(  2n-1\right)  $-times formula
\begin{equation}
(\lambda-(2n)^{2})b_{n}=ab_{n-1}+ab_{n+1},
\end{equation}
which is (34) for $k=n$ by using the formulas
\begin{equation}
b_{k}=\frac{ab_{k-1}+ab_{k+1}}{\lambda-(2k)^{2}},\text{ }b_{1}=\frac{ab_{2}%
}{\lambda-4}%
\end{equation}
obtained respectively from (34) and (33) as follows. In (37) we use (38) for
$b_{n-1}$ and $b_{n+1}$. Then in the obtained formula we isolate the terms
with multiplicand $b_{n}$ and do not change they and use (38) for the terms
with multiplicand $b_{k}$ \ when $k\neq n.$ Continuing these process, $2n-1$
times we obtain
\begin{equation}
\lambda=(2n)^{2}+G_{n}(\lambda)+S_{n}(\lambda)+S_{n-1}(\lambda)+R_{n}(\lambda)
\end{equation}
where $G_{n}(\lambda),S_{n-1}(\lambda),S_{n}(\lambda)$ and $R_{n}(\lambda)$
are defined as follows. One can readily see that the $\left(  2n-1\right)
$-th usages (38) in (37) give the terms with multiplicands $b_{n}$ and
$b_{n+k},$ where $\left\vert k\right\vert \geq2.$ In (39) $R_{n}(\lambda)$
denotes the sum of the terms with multiplicand $b_{n+k}$ for $\left\vert
k\right\vert \geq2.$ Since these terms are obtained after $\left(
2n-1\right)  $-th usages (38) in (37) they contain the multiplicand $a^{2n}.$
Moreover, it follows from (38) that $b_{n+k}=O(a^{2})$ for $\left\vert
k\right\vert \geq2.$ Thus $R_{n}(\lambda)=O(a^{2n+2})$ and (39) has the form
\begin{equation}
\lambda=(2n)^{2}+G_{n}(\lambda)+S_{n}(\lambda)+S_{n-1}(\lambda)+O(a^{2n+2}),
\end{equation}
where $G_{n}(\lambda),$ $S_{n}(\lambda)$ and $S_{n-1}(\lambda)$ contains the
multiplicand $b_{n}$ and without loss of generality, it can be assumed that
$b_{n}=1.$ In (40) $G_{n}(\lambda)$ denotes the sum of the terms whose
denominators does not contain the multiplicand $\lambda-4.$ It is clear that
the terms with multiplicand $\lambda-4$ may be obtained as a result of
$\left(  2n-3\right)  $-th and $\left(  2n-1\right)  $-th usage of the second
formula of (38). In (40) $S_{n-1}(\lambda)$ and $S_{n}(\lambda)$ are the sums
of the terms whose denominators contain the multiplicand $\lambda-4$ and are
obtained in $\left(  2n-3\right)  $-th and $\left(  2n-1\right)  $-th usages
the second formula of (38) respectively. It is clear that $S_{n-1}$ is
obtained by using the formulas in (38) in the following order
$k=n-1,n-2,....,2,1,2,...,n-1.$ Therefore it has the form
\begin{equation}
S_{n-1}=\frac{a^{2n-2}}{(\lambda-4)}%
{\textstyle\prod\limits_{k=2}^{n-1}}
\left(  \lambda-(2k)^{2}\right)  ^{-2}.
\end{equation}

Similarly iterating the formulas (36) $2n-1$ times and arguing as above we get%
\begin{equation}
\mu=(2n)^{2}+G_{n}(\mu)+\widetilde{S}_{n-1}(\mu)+\widetilde{S}_{n}%
(\mu)+O(a^{2n+2}).
\end{equation}
Here $\widetilde{S}_{n-1}(\lambda)$ and $\widetilde{S}_{n}(\lambda)$ are
obtained respectively from $S_{n-1}(\lambda)$ and $S_{n}(\lambda)$ by
replacing the $\lambda-4$ with $\lambda-4-\frac{2a^{2}}{\lambda}$.
$\ \ $Therefore, using (32) and taking into account that $S_{n}(\lambda)$ and
$\widetilde{S}_{n}(\lambda)$ are obtained in$\ \left(  2n-1\right)  $-th
usages the second formula of (38) and (35) respectively, and hence they
contains the multiplicand \ $a^{2n}$ we get
\begin{equation}
\widetilde{S}_{n}(\lambda)=S_{n}(\lambda)+O(a^{2n+2}).
\end{equation}
In (41) replacing $\lambda-4$ with $\lambda-4-\frac{2a^{2}}{\lambda}$ we get
\begin{equation}
\widetilde{S}_{n-1}(\lambda)=S_{n-1}(\lambda)+\frac{2a^{2n}}{\left(
\lambda-4\right)  ^{2}\lambda}%
{\textstyle\prod\limits_{k=2}^{n-1}}
\left(  \lambda-(2k)^{2}\right)  ^{-2}+O(a^{2n+2}).
\end{equation}
Now in the second term of the right-hand side of (44) instead of $\lambda$
writing $\lambda=(2n)^{2}+O(a^{2})$ (it readily follows from (40)) we obtain%
\begin{equation}
\widetilde{S}_{n-1}(\lambda)=S_{n-1}(\lambda)+\frac{2a^{2n}}{\left(
(2n)^{2}-4\right)  ^{2}(2n)^{2}}%
{\textstyle\prod\limits_{k=2}^{n-1}}
\left(  (2n)^{2}-(2k)^{2}\right)  ^{-2}+O(a^{2n+2}).
\end{equation}
\ Now subtracting equality (42) from (40) and using (43) and (45) we obtain
\begin{equation}
\lambda-\mu=f(\lambda)-f(\mu)-\frac{2a^{2n}}{\left(  (2n)^{2}-4\right)
^{2}(2n)^{2}}%
{\textstyle\prod\limits_{k=2}^{n-1}}
\left(  (2n)^{2}-(2k)^{2}\right)  ^{-2}+O(a^{2n+2}),
\end{equation}
where $f$ $(\lambda)=S_{n}(\lambda)+S_{n-1}(\lambda).$ It is clear that $f$
$^{^{\prime}}(\lambda)=O(a^{2})$ for $\lambda=(2n)^{2}+O(a^{2}).$ Therefore by
the mean value theorem we have $f(\lambda)-f(\mu)=\left(  \lambda-\mu\right)
(1+O(a^{2}))$. Now using it in (46) and taking into account that $a=ic,$
$c>0$we get
\begin{equation}
\lambda-\mu=\frac{-2\left(  -1\right)  ^{n}c^{2n}}{\left(  (2n)^{2}-4\right)
^{2}(2n)^{2}}%
{\textstyle\prod\limits_{k=2}^{n-1}}
\left(  (2n)^{2}-(2k)^{2}\right)  ^{-2}+O(a^{2n+2}).
\end{equation}
It is clear that the multiplicands $(2n)^{2}-(2k)^{2}$ for $k=2,3,...,(n-1)$
are the positive number. It follows from (47) that if $n=2k,$ then the PD
eigenvalue $\lambda$ lying in the neighborhood of $\left(  2n\right)  ^{2}$ is
less than the PN eigenvalues $\mu$ lying in the neighborhood of $\left(
2n\right)  ^{2}.$ On the other hand by Notation 1 the periodic eigenvalues
lying in $\left(  2n\right)  ^{2}$ are denoted as $\lambda_{2n}(0,c)<\lambda
_{2n+1}(0,c).$ Thus $\lambda_{4k}(0,c)$ and $\lambda_{4k+1}(0,c)$ are the PD
and PN eigenvalues respectively. Instead of $n=2k$ using $n=2k+1$ and
repeating the above proof we obtain that $\lambda_{4k+2}(0,c)$ and
$\lambda_{4k+3}(0,c)$ are the PN and PD eigenvalues respectively.
\end{proof}

\begin{corollary}
If $c$ is a small number, then for each $k\in\mathbb{N}$, the $(2k-1)$-th
entry and exit points of the graph $G(F)=:\left\{  (\lambda,F(\lambda
)):\lambda\in\mathbb{R}\right\}  $ to the strip $S(2)$ are the PN eigenvalues
and the $2k$-th entry and exit points are the PD eigenvalues.
\end{corollary}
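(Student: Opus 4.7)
The plan is to identify the $m$-th entry/exit points of the graph $G(F)$ with specific periodic eigenvalues $\lambda_n(0)$, and then read off their PD/PN character from Summary 9 and Theorem 8.

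First, I would observe that since $F(\lambda_n(0))=2$ for every periodic eigenvalue, the entry and exit points of $G(F)$ into the strip $S(2)$ along $\mathbb{R}$ are exactly the real periodic eigenvalues, taken in increasing order. By the alternation argument used in the proof of Theorem 5(b) together with the fact (23) that $F(\lambda)\to\infty$ as $\lambda\to-\infty$, these points must alternate entry/exit starting with an entry. For small $c$, Summary 1 (i.e., the $1/2<V<V_2$ result from [22]) tells us that every periodic eigenvalue is real and simple and that the real part of $\sigma(H(c))$ decomposes as the disjoint intervals $I_k=[\lambda_{2k-1}(0),\lambda_{2k}(0)]$ for $k=1,2,\ldots$ Hence, in increasing order, $\lambda_{2m-1}(0)$ is the $m$-th entry point and $\lambda_{2m}(0)$ is the $m$-th exit point of $G(F)$ into $S(2)$.

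Second, I would apply this identification with $m=2k-1$ and $m=2k$. The $(2k-1)$-th entry and exit points are then $\lambda_{4k-3}(0)$ and $\lambda_{4k-2}(0)$ respectively, while the $(2k)$-th entry and exit points are $\lambda_{4k-1}(0)$ and $\lambda_{4k}(0)$. At this point the corollary is reduced to verifying that the pair $(\lambda_{4k-3}(0),\lambda_{4k-2}(0))$ consists of two PN eigenvalues and that the pair $(\lambda_{4k-1}(0),\lambda_{4k}(0))$ consists of two PD eigenvalues.

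Third, I would read these PD/PN assignments directly from the previous results. For $k=1$ the four eigenvalues $\lambda_1(0),\lambda_2(0),\lambda_3(0),\lambda_4(0)$ are covered by Summary 9 (giving $\lambda_1(0),\lambda_2(0)$ as PN and $\lambda_3(0)$ as PD) together with Theorem 8 applied with its index equal to $1$ (giving $\lambda_4(0)$ as PD). For $k\geq 2$, setting $j=k-1$ in Theorem 8 identifies $\lambda_{4k-3}(0)=\lambda_{4j+1}(0)$ and $\lambda_{4k-2}(0)=\lambda_{4j+2}(0)$ as PN, and $\lambda_{4k-1}(0)=\lambda_{4j+3}(0)$ as PD; applying Theorem 8 once more with index $j+1=k$ gives $\lambda_{4k}(0)$ as PD, completing the identification.

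I do not anticipate any serious obstacle: the argument is essentially bookkeeping built on top of the previous two results. The only mild care required is handling the small-index case $k=1$ separately (via Summary 9) since Theorem 8 as stated covers $\lambda_n(0)$ only for $n\geq 4$, and tracking the reindexing $n=4k-3,4k-2,4k-1,4k$ versus the $4j,4j+1,4j+2,4j+3$ form used in Theorem 8.
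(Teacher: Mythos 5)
Your proposal is correct and follows essentially the same route the paper intends: the corollary is stated as an immediate consequence of Theorem 8 together with the identification (via Summary 1, the increasing numbering of the real simple periodic eigenvalues for small $c$, and the alternation of entry/exit points starting from (23)) of the $m$-th entry and exit points with $\lambda_{2m-1}(0)$ and $\lambda_{2m}(0)$. Your explicit handling of the indices $n=1,2,3$ via Summary 9 and the reindexing $4k-3,\dots,4k$ versus $4j,\dots,4j+3$ is exactly the bookkeeping the paper leaves implicit.
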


Now we consider the antiperiodic eigenvalues for the small value of $c$ which
also will be used in Section 5. By Summary 1 all antiperiodic eigenvalues for
small $c$ are nonreal numbers. Moreover, by Theorem 3$(b)$ for the small value
of $c$ the disk $D_{2c}((2n-1)^{2})$ contains one AD and one AN eigenvalues
$\lambda(c)$ and $\mu(c)$ and
\begin{equation}
\lambda(c)=\left(  2n-1\right)  ^{2}+O(c),\text{ }\mu(c)=\overline{\lambda
(c)}.
\end{equation}

\begin{theorem}
For the AD eigenvalue $\lambda(c)$ lying in $D_{2c}((2n-1)^{2})$ the formula
\begin{equation}
\operatorname{Im}\left(  \lambda(c)\right)  =C_{n}a^{2n-1}+O(c^{2n})
\end{equation}
holds as $c\rightarrow0,$ where $a=ic$ and $C_{n}$ is a positive number.
\end{theorem}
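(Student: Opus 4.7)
The plan is to adapt the iteration scheme of Theorem~8 to the antiperiodic problem inside the disk $D_{2c}((2n-1)^{2})$. I would first expand the AD eigenfunction as $y=\sum_{k\geq1}b_{k}\sin((2k-1)x)$ and the AN eigenfunction as $y=\sum_{k\geq1}a_{k}\cos((2k-1)x)$. Substituting into $-y''+2ic\cos(2x)y=\lambda y$ and using the identity $2\cos(2x)\sin((2k-1)x)=\sin((2k+1)x)+\sin((2k-3)x)$ together with $\sin(-x)=-\sin(x)$ (and the corresponding cosine identity with $\cos(-x)=\cos(x)$) yields the recursions
\[
(\lambda-1+a)b_{1}=ab_{2},\qquad (\lambda-(2k-1)^{2})b_{k}=a(b_{k-1}+b_{k+1})\quad(k\geq2)
\]
for AD and the identical system for AN with $(\lambda,b_{k})\mapsto(\mu,a_{k})$ and $+a$ replaced by $-a$ in the first row. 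The crucial point is that the two systems coincide for $k\geq2$ and differ only in the sign of a single $a$ in the first-row equation.

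With the normalization $b_{n}=a_{n}=1$, I would iterate $(\lambda-(2n-1)^{2})b_{n}=a(b_{n-1}+b_{n+1})$ exactly as in the proof of Theorem~8, substituting the $k\neq n$ recursions until all remaining coefficients $b_{n\pm j}$ have $|j|\geq2$. Parallel to formulas (39)--(41), one obtains
\[
\lambda=(2n-1)^{2}+G_{n}(\lambda)+T_{n}(\lambda)+\frac{a^{2n-2}}{\lambda-1+a}\prod_{k=2}^{n-1}(\lambda-(2k-1)^{2})^{-2}+O(c^{2n}),
\]
and the analogous expansion for $\mu$ with $\lambda\mapsto\mu$ and $\lambda-1+a\mapsto\mu-1-a$. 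Here $G_{n}$ collects those iteration terms whose denominators never involve the $k=1$ row, and $T_{n}$ collects the terms from iterations moving upward from $b_{n}$; neither is sensitive to the AD/AN sign asymmetry.

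Subtracting the two expansions and writing $D_{k}=(2n-1)^{2}-(2k-1)^{2}>0$, the smoothness of $G_{n}$ and $T_{n}$ in their arguments gives $G_{n}(\lambda)-G_{n}(\mu)=O(a^{2})(\lambda-\mu)$ and likewise for $T_{n}$. Using $\lambda-1\pm a=D_{1}\pm a+O(c^{2})$ in the first-row contribution yields
\[
\left[\frac{1}{\lambda-1+a}-\frac{1}{\mu-1-a}\right]=\frac{-2a}{D_{1}^{2}}+O(c^{3}),
\]
and a short bootstrap (upgrading the a priori bound $\lambda-\mu=O(c)$ to the sharp estimate $\lambda-\mu=O(c^{2n-1})$) then produces
\[
\lambda-\mu=-2\,a^{2n-1}\prod_{k=1}^{n-1}D_{k}^{-2}+O(c^{2n}).
\]
Finally, by (48) we have $\mu=\overline{\lambda}$, so $\operatorname{Im}\lambda(c)=(\lambda-\mu)/(2i)$, and the claimed expansion $\operatorname{Im}(\lambda(c))=C_{n}a^{2n-1}+O(c^{2n})$ follows with the leading coefficient built from the positive constant $\prod_{k=1}^{n-1}D_{k}^{-2}$.

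The main obstacle is the iteration bookkeeping required to show that, in the subtraction $\lambda-\mu$, every contribution except the one singled out above cancels up to $O(c^{2n})$. This rests on carefully tracking how many powers of $a$ each iteration term carries, together with a mean-value-theorem bound on $G_{n}(\lambda)-G_{n}(\mu)$ and $T_{n}(\lambda)-T_{n}(\mu)$ combined with the bootstrap estimate for $\lambda-\mu$, in direct analogy with the manipulations of (43)--(46) in the proof of Theorem~8.
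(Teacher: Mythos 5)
Your proposal is correct and follows essentially the same route as the paper's proof: the same $(2n-2)$-fold iteration of the three-term recursion with $b_n=1$, the same isolation of the single term whose denominator carries the first-row factor $\lambda-1\mp a$, and the same order counting showing that term first contributes at order $a^{2n-1}$. The only difference is presentational: the paper works with the AD expansion alone and extracts $\operatorname{Im}\lambda$ via the conjugation symmetry $\overline{G_{n}(a,\lambda)}=G_{n}(a,\overline{\lambda})$ of its formula (54), whereas you subtract the AD and AN expansions; since $\mu(c)=\overline{\lambda(c)}$ by (48) and the AN system is the complex conjugate of the AD system, this is the same computation.
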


\begin{proof}
Formula (49) for $n=1$ follows from formula (24) of [22]. To prove it for
$n>1$ we iterate $\left(  2n-2\right)  $-times the formula%
\begin{equation}
(\lambda(c)-(2n-1)^{2})c_{n}=ac_{n-1}+ac_{n+1}%
\end{equation}
(see (18) of [22] for $k=n)$ as follows. Each time isolate the terms with
multiplicand $c_{n}$ (we call they as isolated terms) and use the formulas
\begin{equation}
c_{1}=\frac{ac_{2}}{\lambda(c)(a)-1-a},\text{ }c_{k}=\frac{ac_{k-1}+ac_{k+1}%
}{\lambda(c)-(2k-1)^{2}},\text{ }%
\end{equation}
where $k=2,3,...,$ for the terms with multiplicand $c_{k}$ \ when $k\neq n.$
After $\left(  2n-2\right)  $ times usages of (51) in (50) we obtain
\begin{equation}
\lambda(c)=(2n+1)^{2}+G_{n}(\lambda(c))+S_{n}(\lambda(c))+O(a^{2n}),
\end{equation}
where $S_{n}$ is obtained by using the formulas (51) for $c_{k}$ in the
following order $k=n-1,n-2,....,2,1,2,...,n-1$ and hence has the form
\begin{equation}
S_{n}=\left(  (\lambda(c)-1-a)^{-1}%
{\textstyle\prod\limits_{k=2}^{n-1}}
(\lambda(c)-(2k-1))^{-2}\right)  a^{2n-2}.
\end{equation}
The sum of other isolated terms is denoted by $G_{n}(\lambda(c)).$ Thus
$G_{n}(\lambda(c))$ is the sum of fractions whose numerators are $a^{2k}$ for
$k=1,2,...,(n-1),$ denominators are the products of $\lambda(c)-(2s-1)^{2}$
for $s\neq n$ and hence
\begin{equation}
\overline{G_{n}(a,\lambda(c))}=G_{n}(a,\overline{\lambda(c)}),\text{
}\overline{G_{n}(a,\lambda(c))}-G_{n}(a,\lambda(c))=\left(  \overline
{\lambda(c)}-\lambda(c)\right)  O(a^{2}).
\end{equation}
Using (48) we obtain $(\lambda(c)-1-a)^{-1}=\gamma(1+a\gamma)^{-1}%
=\gamma(1+a+O(a^{2})),$ where $\gamma=(\lambda(c)-1)^{-1}.$ \ It with (53)
implies that
\begin{equation}
\operatorname{Im}S_{n}=\left(  ((2n-1)^{2}-1)^{-1}%
{\textstyle\prod\limits_{k=2}^{n-1}}
((2n-1)^{2}-(2k-1))^{-2}\right)  a^{2n-1}+O(a^{2n})
\end{equation}
$\ $ Now using (54) and (55) in (52) we get the proof of (49).
\end{proof}

\section{Spectal Singularities, ESS, and Spectral Expansion}

First using the results of Section 3 we consider spectral singularities and ESS.

\begin{theorem}
If $V>1/2$ then the operator $L(V)$ has infinitely many spectral singularities
and spectral singularity at infinity.
\end{theorem}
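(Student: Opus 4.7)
The plan is to read the conclusion directly off Theorem 7 and Summary 5(b), and then to upgrade the countable family of spectral singularities produced this way to a spectral singularity at infinity via a diagonal selection.

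First I would fix $c=\sqrt{4V^2-1}>0$, so that by (8) the Bloch eigenvalues of $L(V)$ coincide with those of $H(c)$. For every $k>n_3$, Theorem 7(a) produces a number $t_k\in(0,1)$ such that $\lambda_{2k-1}(t_k)=\lambda_{2k}(t_k)$ is a double real Bloch eigenvalue lying in the interval $I_k=[\lambda_{2k-1}(0),\lambda_{2k}(0)]$. Summary 5(b) then says that any multiple eigenvalue of $L_t(q)$ with $t\in(0,1)$ is a spectral singularity of $L$. Applying this once for each $k>n_3$ therefore gives a spectral singularity $\lambda_{2k-1}(t_k)$ of $L(V)$. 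The points $\lambda_{2k-1}(t_k)$ are pairwise distinct: they sit on distinct pairs of bands $\Gamma_{2k-1}\cup\Gamma_{2k}$ and by Summary 7(c) two different bands share at most one point, while the intervals $I_k$ themselves are separated by the spectral gaps described in Theorem 5(b). Hence $L(V)$ has infinitely many spectral singularities.

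For the spectral singularity at infinity I would unpack the reason $\lambda_{2k-1}(t_k)$ qualified as a spectral singularity above: by Definition~1 and (10), there must be a sequence $\{\tau_{k,j}\}_j\subset(-1,1]$ of parameters, with $\lambda_{2k-1}(\tau_{k,j})$ simple, along which
\[
\lambda_{2k-1}(\tau_{k,j})\to\lambda_{2k-1}(t_k),\qquad |d_{2k-1}(\tau_{k,j})|\to 0\quad (j\to\infty).
\]
By Theorem~7(a), $\lambda_{2k-1}(t)$ is simple for all $t\neq t_k$ in a neighborhood of $t_k$, so such a sequence exists simply by letting $\tau_{k,j}\to t_k$ through values different from $t_k$ (the projection norm $\|e(\lambda_{2k-1}(\tau))\|=|d_{2k-1}(\tau)|^{-1}$ blows up as $\tau\to t_k$ because two simple eigenvalues merge at a double eigenvalue with geometric multiplicity one, which here is forced by the same argument as in Theorem~8). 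Then a diagonal extraction yields $\tau_k'=\tau_{k,j(k)}$ with
\[
|d_{2k-1}(\tau_k')|<\tfrac1k,\qquad |\lambda_{2k-1}(\tau_k')-\lambda_{2k-1}(t_k)|<1.
\]
Since $\lambda_{2k-1}(t_k)\in I_k$ and the left endpoints $\lambda_{2k-1}(0)\to\infty$ (by (24)), we obtain $|\lambda_{2k-1}(\tau_k')|\to\infty$ together with $|d_{2k-1}(\tau_k')|\to0$. With $n_k=2k-1$ and $t_k\mapsto\tau_k'$, this is exactly Definition~1 of a spectral singularity at infinity.

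The main obstacle is the quiet point in the middle paragraph: justifying that, near the double point $t_k$, the biorthogonal normalization $d_{2k-1}(t)=(\Psi_{2k-1,t},\Psi_{2k-1,t}^*)$ really vanishes. The paper's hypotheses make this cheap, because geometric multiplicity one at the multiple eigenvalue (the Jordan-block case) turns the two merging simple projections into an unbounded family; this is precisely the content used implicitly in Summary 5(a)–(b), and I would cite it rather than redo the perturbation calculation. Everything else is organizational: disjointness of the singularities (Summary~7(c)), reality and location of the $\lambda_{2k-1}(t_k)$ (Theorem~7), and the growth $\lambda_{2k-1}(0)\to\infty$ (either from (24) or from the general band-endpoint asymptotics already used in the paper).
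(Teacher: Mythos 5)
Your argument is correct and follows essentially the same route as the paper: the double Bloch eigenvalues $\lambda_{2k-1}(t_k)=\lambda_{2k}(t_k)$ with $t_k\in(0,1)$ from Theorem 6(a) (which you cite as Theorem 7(a)) are spectral singularities by Summary 5(b), and since they tend to infinity, Definition 1 yields a spectral singularity at infinity. Your diagonal extraction simply makes explicit the step the paper dismisses with ``it readily follows from Definition 1,'' and the extra justification of $|d_{2k-1}(\tau)|\to 0$ is not even needed, since Summary 5(b) already asserts the point is a spectral singularity in the sense of Definition 1, which provides the required sequence directly.
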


\begin{proof}
By Theorem 6$(a)$ for each $k>n_{3}=\left\lfloor (2c+1)/2\right\rfloor +1$
there exists $t_{k}\in(0,\pi)$ such that $\lambda_{2k-1}(t_{k})=\lambda
_{2k}(t_{k})$ is a double eigenvalue. It is spectral singularity due to
Summary 5$(b).$ \ Thus all bands $\Gamma_{k}$ for $k>n_{3}$ contains a
spectral singularity. On the other hand, it readily follows from Definition 1
that if $L(V)$ has a sequence of spectral singularities converging to infinity
then it has the spectral singularity at infinity.
\end{proof}

To consider the ESS we use the following theorem.

\begin{theorem}
A number $\lambda$ is an ESS of $L(V)$ if and only if it is either a multiple
periodic or a multiple antiperiodic eigenvalue.
\end{theorem}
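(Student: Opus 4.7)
The plan is to prove Theorem 10 by combining Summary 5 (which summarizes the abstract ESS theory from [21, 24]) with Theorem 9 (which establishes that periodic and antiperiodic eigenvalues of $L(V)$ have geometric multiplicity $1$), and using the reflection symmetry $\lambda_n(-t)=\lambda_n(t)$ recorded just after (6).

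For the forward direction, suppose $\lambda$ is an ESS of $L(V)$. Since $V>1/2$ we have $V\neq \pm 1/2$, so Summary 5(a) tells us that $\lambda=\lambda_n(t_0)$ is a multiple eigenvalue of $L_{t_0}(V)$ for some $t_0\in(-1,1]$ and some $n\in\mathbb{N}$. Using the symmetry $\lambda_n(-t)=\lambda_n(t)$, any multiple eigenvalue occurring for $t_0\in(-1,0)$ also occurs for $-t_0\in(0,1)$, so without loss of generality $t_0\in[0,1]$. Summary 5(b) rules out the case $t_0\in(0,1)$ by asserting that such multiple eigenvalues are spectral singularities but never ESS. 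Therefore $t_0\in\{0,1\}$, which means $\lambda$ is a multiple periodic or multiple antiperiodic eigenvalue, as claimed.

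For the converse, suppose $\lambda$ is a multiple periodic eigenvalue ($t_0=0$) or a multiple antiperiodic eigenvalue ($t_0=1$) of $L(V)$. By Theorem 9 (which applies since $V>1/2$ implies $V\neq\pm 1/2$) the geometric multiplicity of $\lambda$ is exactly $1$. Hence $\lambda$ is a multiple $2$-periodic eigenvalue with geometric multiplicity $1$, and Summary 5(c) implies that $\lambda$ is an ESS. This completes the equivalence.

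The argument is essentially a bookkeeping application of the previously stated machinery, so I do not expect a genuine obstacle; the only point requiring a moment of care is the reduction from $t_0\in(-1,1]$ to $t_0\in[0,1]$ so that Summary 5(b) can be applied. Everything else is a direct appeal to Summary 5(a)--(c) and Theorem 9.
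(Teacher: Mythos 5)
Your proof is correct and follows essentially the same route as the paper: Summary 5(a) to place ESS among multiple Bloch eigenvalues, Summary 5(b) to exclude $t\in(0,1)$, and the geometric-multiplicity-one result (Theorem 7 in the paper's numbering) combined with Summary 5(c) for the converse. The only addition is your explicit reduction from $t_0\in(-1,1]$ to $t_0\in[0,1]$ via $\lambda_n(-t)=\lambda_n(t)$, a detail the paper leaves implicit.
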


\begin{proof}
Let $\lambda\in S(L(V))$ be an ESS. Then by Summary 5$(a)$ it is multiple
Bloch eigenvalue. Since, by Summary 5$(b),$ the Bloch eigenvalues $\lambda
_{n}(t)$ for $t\in(0,\pi)$ and $n\in\mathbb{N}$ are not ESS, $\lambda$ is
either periodic or antiperiodic eigenvalue. Now suppose that $\lambda\in
S(L(V))$ is a multiple periodic or antiperiodic eigenvalue. Then by Theorem 7
the geometric multiplicity of $\lambda$ is $1$ and by Summary 5$(c)$ it is an ESS.
\end{proof}

\begin{remark}
Note that by (8) the operators $H(c)$ and $L(V)$ have the same Hill
discriminants $F(\lambda)$ and $G(\lambda)$ if $c=\sqrt{4V^{2}-1}.$ Therefore
if $\lambda$ is a multiple eigenvalue of $H_{t}(c),$ then it is also a
multiple eigenvalue of $L_{t}(V)$ with the same multiplicity. Note that if
$V<1/2,$ then $ic\in(-\infty,0)$ \ and the well known self-adjo\i nt
Mathieu-Hill operator $H(c)$ has no double Bloch eigenvalues. Therefore, by
Summary 5$(a)$ for \ $V<1/2$ the operator $L(V)$ has no spectral singularities
and ESS. If $V=1/2$ then all $\ 2$-periodic eigenvalues of $L(V)$ except $0$
are double eigenvalue with geometric multiplicities 1 (see [5]) and hence are
ESS (see Summary 5$(c)$).
\end{remark}

Now we consider the ESS of $L(V)$ for $V>1/2.$

\begin{theorem}
Let $1/2<V<\sqrt{5}/2.$ $(a)$ If$\ V\neq V_{2},$ then the operator $L(V)$ has
no ESS.

$(b)$ If $V=V_{2}$ , then $L(V)$ has a unique ESS and it is $\lambda
_{1}(0)=\lambda_{2}(0).$
\end{theorem}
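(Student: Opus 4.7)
The proof plan is to reduce the statement directly to Theorem 9 combined with Summary 1, so that the classification of ESS is obtained from the already-established classification of multiple $2$-periodic eigenvalues for $V$ in the range $(1/2,\sqrt{5}/2)$.

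For part $(a)$, I would first recall from Theorem 9 that a number $\lambda$ is an ESS of $L(V)$ if and only if it is a multiple periodic or multiple antiperiodic eigenvalue. Thus it suffices to verify that for $V\in(1/2,\sqrt{5}/2)\setminus\{V_{2}\}$ no multiple $2$-periodic eigenvalue exists. Summary 1 provides exactly this: the antiperiodic eigenvalues $\lambda_{n}(1)$ are simple for \emph{all} $V\in(1/2,\sqrt{5}/2)$, while the periodic eigenvalues $\lambda_{n}(0)$ are simple for $V\in(1/2,V_{2})$ (Case~1) and also for $V\in(V_{2},\sqrt{5}/2)$ (Case~3), since in the latter range $\lambda_{1}(0),\lambda_{2}(0)$ are simple nonreal conjugate numbers and $\lambda_{n}(0)\in\mathbb{R}$ are simple for $n>2$. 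Hence no multiple $2$-periodic eigenvalue exists for $V\neq V_{2}$, and by Theorem 9 the operator $L(V)$ has no ESS.

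For part $(b)$, at $V=V_{2}$ Summary 1 asserts that $\lambda_{1}(0)=\lambda_{2}(0)$ is a double periodic eigenvalue, while $\lambda_{n}(0)$ for $n>2$ and all $\lambda_{n}(1)$ remain simple. Therefore the unique multiple $2$-periodic eigenvalue is $\lambda_{1}(0)=\lambda_{2}(0)$. Applying Theorem 9 once more, this is the unique candidate for an ESS, and it \emph{is} an ESS: by Theorem 7 its geometric multiplicity is $1$, so Summary 5$(c)$ (which asserts that a multiple $2$-periodic eigenvalue with geometric multiplicity $1$ is an ESS) applies and yields the conclusion.

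The argument is essentially a bookkeeping step once Theorem 9 is in hand, so no genuine obstacle remains; all the technical content has been absorbed into Theorem 9 (characterization of ESS), Theorem 7 (geometric multiplicity equals one at $2$-periodic points), and the structural results of \cite{22} recorded in Summary 1. The only point that requires care is making sure both directions of the equivalence in Theorem 9 are invoked—the ``only if'' direction to rule out ESS in part $(a)$, and the ``if'' direction (together with Theorem 7 and Summary 5$(c)$) to produce the ESS in part $(b)$.
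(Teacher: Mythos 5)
Your proof is correct and takes essentially the same route as the paper: the paper's own proof is exactly the combination of Summary 1 (all $2$-periodic eigenvalues simple for $V\neq V_{2}$; the unique double periodic eigenvalue $\lambda_{1}(0)=\lambda_{2}(0)$ at $V=V_{2}$) with the ESS characterization theorem, which is Theorem 11 in the paper's numbering rather than Theorem 9. The extra step you flag in part $(b)$ --- invoking Theorem 7 and Summary 5$(c)$ to confirm the double eigenvalue really is an ESS --- is already absorbed into the proof of that characterization theorem, so nothing is missing.
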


\begin{proof}
$(a)$ By Summary 1 all periodic and antiperiodic eigenvalues are simple.
Therefore by Theorem 11 the operator $L(V)$ has no ESS.

$(b)$ By Summary 1 $\lambda_{1}(0)=\lambda_{2}(0)$ is a unique double periodic
eigenvalue and all antiperiodic eigenvalues are simple. Therefore by Theorem
11 $\lambda_{1}(0)$ is a unique ESS.
\end{proof}

\begin{theorem}
There exists a sequence $0<c_{2}<c_{3}<\cdot\cdot\cdot$ such that
$c_{k}\rightarrow\infty$ as $k\rightarrow\infty$ and $H(c)$ has no ESS and has
ESS respectively if and only if $c\neq c_{k}$ for all $k$ and $c=c_{k}$ for
some $k,$ where $c_{2}=\sqrt{4V_{2}^{2}-1}$\ and $V_{2}$ is the second
critical point. Moreover, the number of ESS of $H(c_{k})$ is not greater than
$c_{k}+2.$ The number of the bands of $H(c_{k})$ containing at least one ESS
is not greater than $2c_{k}+3$ respectively.
\end{theorem}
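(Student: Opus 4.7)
The plan is to reduce ESS to multiple 2-periodic eigenvalues, localize them using the rectangle/disk estimates of Section 3, and then exploit the analytic dependence on $c$ to get a discrete sequence. By Theorem 11, a point is an ESS of $H(c)$ (equivalently of $L(V)$ via (8)) exactly when it is a multiple periodic or multiple antiperiodic eigenvalue. Combining this with Theorem 6$(a)$, for every $k>n_{3}$ the unique multiple Bloch eigenvalue in $\Gamma_{2k-1}\cup\Gamma_{2k}$ sits at $t_{k}\in(0,1)$, and by Summary 5$(b)$ such a point is never an ESS. Hence every ESS lies in a band $\Gamma_{n}$ with $n\le 2n_{3}$, and since $n_{3}=\lfloor(2c+1)/2\rfloor+1\le c+3/2$ we get $2n_{3}\le 2c+3$, which is the stated bound on the number of bands containing at least one ESS.

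For the bound on the total number of ESS at a fixed $c$, I would use Theorem 3. By part $(a)$ all multiple periodic eigenvalues lie in the rectangle (19), which contains exactly $2n_{1}+1$ periodic eigenvalues counted with algebraic multiplicity, so the number of \emph{distinct} multiple periodic eigenvalues is at most $\lfloor(2n_{1}+1)/2\rfloor=n_{1}$. Symmetrically, by part $(b)$, the number of distinct multiple antiperiodic eigenvalues is at most $n_{2}+1$. Adding these and using $n_{1}\le(c+1)/2$ and $n_{2}\le c/2$ gives the total bound $n_{1}+n_{2}+1\le c+3/2<c+2$ on the number of ESS.

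Finally, let $E:=\{c>0:H(c)\text{ has at least one ESS}\}$. By Summary 1 one has $E\cap(0,c_{2})=\varnothing$ and $c_{2}\in E$, so $c_{2}=\sqrt{4V_{2}^{2}-1}$ is indeed the smallest element. For discreteness of $E$ I would appeal to Remark 1 together with the implicit function theorem: around any simple 2-periodic eigenvalue the branch $c\mapsto\lambda_{n}(0,c)$ (resp. $\lambda_{n}(1,c)$) is analytic, and two such analytic branches can coincide only at isolated $c$; equivalently, an ESS corresponds to a common zero of the jointly analytic pair $(F(\lambda,c)\mp 2,\,F^{\prime}(\lambda,c))$, whose projection on the $c$-axis is discrete. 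Combined with the uniform finite bound $|E\cap(0,N]|\le $ const depending on $N$ provided by the previous paragraph, this lets us list $E$ as $c_{2}<c_{3}<\cdots$. The main obstacle is verifying $E$ is infinite with $c_{k}\to\infty$: for this I would argue that as $c$ grows, Theorem 6$(a)$ supplies for every $k>n_{3}(c)$ a new double Bloch eigenvalue at some $t_{k}(c)\in(0,1)$, and by continuity of $t_{k}(\cdot)$ together with the real-spectrum description in Theorem 5 and the asymptotics (24), each such $t_{k}$ must cross the values $0$ or $1$ for some $c$ arbitrarily large, producing an unbounded sequence of transition points. Making this last continuity/global-topology step fully rigorous — ruling out the pathological alternative that from some $c$ onward no $t_{k}(c)$ ever reaches $\{0,1\}$ — is the delicate part of the argument.
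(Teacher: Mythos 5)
Your proposal follows essentially the same strategy as the paper: reduce ESS to multiple $2$-periodic eigenvalues via Theorem 11, confine those to the low-lying rectangles of Theorem 3 to get the counting bounds, and use analytic dependence on $c$ (perturbation theory / implicit function theorem) to conclude that the exceptional set of $c$-values is locally finite. Two points of comparison are worth recording. First, your bound on the number of bands is reached by a different (and equally valid) route: you use Theorem 6$(a)$ to show no band $\Gamma_n$ with $n>2n_3$ can carry an ESS, giving $2n_3\le 2c+3$, whereas the paper counts the total algebraic multiplicity of multiple $2$-periodic eigenvalues ($\le 4n_k-1\le 2c_k+3$) and invokes Summary 7$(c)$, which identifies a multiple eigenvalue of multiplicity $p$ as a common point of $p$ bands; your count of distinct ESS ($n_1+n_2+1\le c+3/2$) is in fact slightly sharper than the paper's $2n_k-1\le c_k+2$. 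Second, the step you flag as delicate -- proving that $E$ is infinite and hence that $c_k\to\infty$ through infinitely many values -- is not actually carried out in the paper's proof either: the printed argument establishes only that $E\cap(0,N)$ is finite for every $N$, and the existence of infinitely many critical points is deferred to the heuristic discussion and Conjecture 3 of Section 5. So your honest admission there is not a defect of your proposal relative to the paper; it exposes a gap the paper itself leaves open. The only caution I would add is that your appeal to ``two analytic branches coincide only at isolated $c$'' needs the same tacit exclusion of permanently degenerate branches that the paper's perturbation-theory sentence also assumes without proof.
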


\begin{proof}
Let $c<2n-1.$ Then using Theorem 3$(a)$ and $(b)$ we conclude that the total
number of multiple periodic and antiperiodic eigenvalues (counting
multiplicity) is not greater than $4n-1.$ It means that there exist at most
$s$ different numbers denoted by $\rho_{1}(c),\rho_{2}(c),...\rho_{s}(c)$
which are the multiple periodic and antiperiodic eigenvalues, where
$s\leq2n-1.$ Since the operators $H_{0}(c)$ and $H_{1}(c)$ analytically depend
on $c,$ by the well-known perturbation theory if $\rho_{k}(c_{0})$ is a
multiple periodic eigenvalue then there exists $\varepsilon_{k}$ such that the
eigenvalues of the operators $H_{0}(c)$ and $H_{1}(c)$ for $c\in
U(c_{0},\varepsilon_{k})$ lying in the small neighborhood of $\rho_{k}(c_{0})$
are simple. Therefore there exist at most finite number $c_{1},c_{2}%
,...,c_{m}$ from $\left(  0,2n-1\right)  $ such that $H_{0}(c)$ and $H_{1}(c)$
may have multiple eigenvalue.

Let $n_{k}$ be the smallest integer such that $c_{k}<2n_{k}-1$. Instead of $c$
and $n$ using $c_{k}$ and $n_{k}$ respectively and repeating the above
argument we obtain that the total number of multiple $2$-periodic (periodic
and antiperiodic) eigenvalues without counting multiplicity and counting
multiplicity are not greater than $2n_{k}-1$ and $4n_{k}-1$ respectively. By
the definition of $n_{k}$ we have $2n_{k}-1\leq c_{k}+2.$ Therefore by Theorem
11 and Summary 7$(c)$ the number of the ESS of the operator $H(c_{k})$ and the
number of the bands of $H(c_{k})$ containing at least one ESS are not greater
than $c_{k}+2$ and $2c_{k}+3$ respectively.
\end{proof}

\begin{remark}
It was suitable to formulate Theorem 13 in term of the parameter $c$, since we
have used the notation of Theorem 3. By (8) and Theorem 13 the operator $L(V)$
for $V>1/2$ has no ESS and has ESS respectively if and only if $V\neq V_{k}$
for all $k$ and $V=V_{k}$ for some $k,$ where $V_{k}=\frac{1}{2}\sqrt
{c_{k}^{2}+1}$ is said to be the $k$-th critical point. Moreover the number of
ESS of $L(V_{k})$ and the number of the bands of $L(V_{k})$ containing at
least one ESS is not greater than $\sqrt{4V_{k}^{2}-1}+2$ and $2\sqrt
{4V_{k}^{2}-1}+3$ respectively. Denote by $\Gamma_{1},\Gamma_{2}%
,...,\Gamma_{m(k)}$ the bands containing at least one ESS of $L(V_{k})$, where
$m(k)\leq2\sqrt{4V_{k}^{2}-1}+3.$ Then the bands $\Gamma_{m(k)+1},$
$\Gamma_{m(k)+2},...,$ don't contain an ESS.
\end{remark}

Now using Summary 4, Corollary 1 and Theorem 12 we get the following results
about the spectral expansions of $L(V)$ for $1/2<V<\sqrt{5}/2.$

\begin{theorem}
Let $1/2<V<\sqrt{5}/2.$

$(a)$ If$\ V\neq V_{2}$, then the spectral expansion for $L(V)$ has the
elegant form (11).

$(b)$ If $V=V_{2}$ , then the spectral expansion for $L(V)$ has the following
form
\begin{equation}
f(x)=\frac{1}{2\pi}\left(  \int\limits_{(-\pi,\pi]}\left[  a_{1}(t)\Psi
_{1,t}(x)+a_{2}(t)\Psi_{2,t}(x)\right]  dt+\sum_{n=3}^{\infty}\int
\nolimits_{(-\pi,\pi]}a_{n}(t)\Psi_{n,t}(x)dt\right)  .
\end{equation}
Moreover
\[
\int\limits_{(-\pi,\pi]}a_{1}(t)\Psi_{1,t}+a_{2}(t)\Psi_{2,t}dt=\lim
_{\varepsilon\rightarrow0}\left(  \int\limits_{(-\pi,\pi]\backslash
(-\varepsilon,\varepsilon)}a_{1}(t)\Psi_{1,t}+\int\limits_{(-\pi
,\pi]\backslash(-\varepsilon,\varepsilon)}a_{2}(t)\Psi_{2,t}dt\right)
\]

\end{theorem}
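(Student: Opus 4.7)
The plan is to read Theorem 14 as a direct corollary of the structural results already in hand: Corollary 1 (which rules out an essential spectral singularity at infinity), Theorem 12 (which completely describes the finite ESS of $L(V)$ for $V$ in this range), and Summary 4 (which converts such ESS information into the shape of the spectral expansion). So the work is really a matter of verifying the hypotheses of Summary 4 in each of the two cases and then quoting the corresponding formula.

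First I would note that in both $(a)$ and $(b)$ we have $V\neq\pm1/2$, so Corollary 1 applies and $L(V)$ has no ESS at infinity. For part $(a)$, Theorem 12$(a)$ tells us that $L(V)$ additionally has no ESS. Together this puts us in the hypothesis of Summary 4$(a)$, which states that the spectral expansion has the elegant form (11); this is exactly the claim of $(a)$.

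For part $(b)$, Theorem 12$(b)$ identifies a single ESS, namely the double periodic eigenvalue $\lambda_{1}(0)=\lambda_{2}(0)$, which lies in both bands $\Gamma_{1}$ and $\Gamma_{2}$ and in no other band. Thus the index set $N(\mathbb{E})$ appearing in Summary 4$(b)$ equals $\{1,2\}$, and substituting this into formula (14) yields the expansion (56) directly.

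The one non-mechanical step is justifying that the regularizing set $A(\varepsilon)=(-\pi+\varepsilon,-\varepsilon)\cup(\varepsilon,\pi-\varepsilon)$ supplied by Summary 4$(b)$ may be replaced by $(-\pi,\pi]\setminus(-\varepsilon,\varepsilon)$ in the limit formula of the theorem. For this I would argue that the non\-integrability of $1/d_{n}$ which forces the regularization occurs only at the parameter values $t$ corresponding to ESS; by Theorem 12$(b)$ the unique ESS of $L(V_{2})$ is the periodic eigenvalue at $t=0$, while Summary 1 guarantees that the antiperiodic eigenvalues $\lambda_{1}(1)$ and $\lambda_{2}(1)$ are simple and hence are not ESS. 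Therefore $1/d_{1}$ and $1/d_{2}$ are integrable near $t=\pm\pi$ (or $t=\pm1$, in the original parametrization), and the neighborhoods of $\pm\pi$ need not be excised; only the neighborhood of $0$ must be. This is the sole genuinely delicate point; everything else is bookkeeping against the summaries already listed.
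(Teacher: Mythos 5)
Your proposal is correct and follows essentially the same route as the paper: both parts are obtained by combining Corollary 1, Theorem 12, and Summary 4, with $N(\mathbb{E})=\{1,2\}$ substituted into formula (14) for part $(b)$. Your closing observation --- that only the neighborhood of $t=0$ needs to be excised in the limit formula, because the unique ESS sits at the periodic eigenvalue while the antiperiodic eigenvalues $\lambda_{1}(1),\lambda_{2}(1)$ are simple --- is a detail the paper's proof leaves implicit, and it is a correct and worthwhile justification of the stated regularization.
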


\begin{proof}
$(a)$ By Theorem 12$(a)$ and Corollary 1 the operator $L(V)$ has no ESS and
ESS at infinity. Therefore the proof follows from Summary 4$(a)$

$(b)$ By Theorem 12$(b),$ $\lambda_{1}(0)=\lambda_{2}(0)$ is a unique ESS of
$L(V)$. \ Therefore the set $N\left(  \mathbb{E}\right)  $ defined in Summary
4$(b)$ is $\left\{  1,2\right\}  $. Moreover, by Corollary 1 the operator
$L(V)$ has no ESS at infinity. Therefore (56) follows from (14).
\end{proof}

Now changing the variable to $\lambda$ in (11) and (56) as was done in [19]
and using the relations
\[
\Gamma_{n}=\lim_{\varepsilon\rightarrow0}\Gamma_{n}(\varepsilon),\text{
}\Omega_{1}(\varepsilon)=\Gamma_{1}(\varepsilon)\cup\Gamma_{2}(\varepsilon),
\]
where $\Gamma_{n}(\varepsilon)=\{\lambda=\lambda_{n}(t):t\in\left[
\varepsilon,\pi-\varepsilon\right]  \}$ we obtain

\begin{theorem}
Let $1/2<V<\sqrt{5}/2.$

$(a)$ If$\ V\neq V_{2}$, then the spectral expansion for $L(V)$ has the form
\begin{equation}
f(x)=\frac{1}{\pi}\sum_{k\in\mathbb{N}}\left(  \int\limits_{\Gamma_{k}}%
(\phi(x,\lambda))\frac{1}{p(\lambda)}d\lambda\right)  ,
\end{equation}
where
\[
\phi(x,\lambda)=\theta^{^{\prime}}h(\lambda)\varphi(x,\lambda)+\frac{1}%
{2}(\theta-\varphi^{^{\prime}})(h(\lambda)\theta(x,\lambda)+g(\lambda
)\varphi(x,\lambda)-\varphi g(\lambda)\theta(x,\lambda),
\]%
\[
h(\lambda)=\int\limits_{-\infty}^{\infty}\varphi(x,\lambda)f(x)dx,\text{
}g(\lambda)=\int\limits_{-\infty}^{\infty}\theta(x,\lambda)f(x)dx,\text{
}p(\lambda)=\sqrt{4-F^{2}(\lambda)},
\]
$\varphi=\varphi(\pi,\lambda),$ $\varphi^{^{\prime}}=\varphi^{^{\prime}}%
(\pi,\lambda),$ $\theta=\theta(\pi,\lambda)$ and $\theta^{^{\prime}}%
=\theta^{^{\prime}}(\pi,\lambda).$

$(b)$ If $V=V_{2}$ , then the spectral expansion for $L(V)$ has the following
form
\begin{equation}
f(x)=\frac{1}{\pi}p.v.\left(  \int\limits_{\Omega_{1}}(\phi(x,\lambda
))\frac{1}{p(\lambda)}d\lambda\right)  +\frac{1}{\pi}\sum_{k=3}^{\infty
}\left(  \int\limits_{\Gamma_{k}}(\phi(x,\lambda))\frac{1}{p(\lambda)}%
d\lambda\right)  ,
\end{equation}
where $p.v.$ integral over $\Omega_{1}$ is the limit of the integral over
$\Omega_{1}(\varepsilon)$ as $\varepsilon\rightarrow0.$
\end{theorem}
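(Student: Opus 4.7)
The plan is to change variables from $t$ to $\lambda$ in the expansions (11) and (56) of Theorem 15, exactly as was done in [19]. First I would express the normalized quasiperiodic eigenfunctions $\Psi_{n,t}$ and $\Psi_{n,t}^{\ast}$ as linear combinations of the fundamental solutions $\theta(x,\lambda)$ and $\varphi(x,\lambda)$ by imposing the boundary conditions (3) for $L_t(q)$ and $L_t(q)^{\ast}$. Using the Wronskian identity $\theta(\pi,\lambda)\varphi^{\prime}(\pi,\lambda) - \theta^{\prime}(\pi,\lambda)\varphi(\pi,\lambda) = 1$ together with the characteristic equation (4), one can compute $d_n(t) = (\Psi_{n,t}, \Psi_{n,t}^{\ast})$ explicitly in terms of the values $\theta,\theta^{\prime},\varphi,\varphi^{\prime}$ at $x=\pi$ and the discriminant $F(\lambda)$. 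In parallel, the coefficient $a_n(t)$ from (12) gets rewritten in terms of the integrals $h(\lambda) = \int_{\mathbb{R}} \varphi(x,\lambda) f(x)\, dx$ and $g(\lambda) = \int_{\mathbb{R}} \theta(x,\lambda) f(x)\, dx$ that appear in the statement.

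Next I would carry out the substitution $\lambda = \lambda_n(t)$ on each band $\Gamma_n$. Differentiating $F(\lambda_n(t)) = 2\cos\pi t$ gives $F^{\prime}(\lambda)\, d\lambda = -2\pi\sin(\pi t)\, dt$, and since $2\sin(\pi t) = \sqrt{4 - F^2(\lambda)} = p(\lambda)$ on the band, this yields $dt = -F^{\prime}(\lambda)(\pi p(\lambda))^{-1}\, d\lambda$. Substituting into (11) and combining the integrand $a_n(t)\Psi_{n,t}(x)$ with this Jacobian, the $F^{\prime}(\lambda)$ factor should absorb into the numerator and reassemble into precisely the expression $\phi(x,\lambda)/p(\lambda)$ appearing in (57); the symmetry $\lambda_n(-t) = \lambda_n(t)$ converts integration over $t\in(-1,1]$ into integration over the full band $\Gamma_n$ with the extra factor of $2$ that turns the prefactor $\tfrac{1}{2\pi}$ into $\tfrac{1}{\pi}$. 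This is the calculation already performed in [19], and since it is purely algebraic (based only on the structure of Hill's equation and the Wronskian), it carries over unchanged to the non-self-adjoint operator $L(V)$. This delivers part $(a)$.

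For part $(b)$, I would run the same change of variable band by band, but keep $\Gamma_1$ and $\Gamma_2$ coupled. Under $\lambda = \lambda_n(t)$ the unique ESS $\lambda_1(0) = \lambda_2(0)$ corresponds to $t=0$ on both bands, so the truncation $(-\pi,\pi]\setminus(-\varepsilon,\varepsilon)$ in (56) translates, via the change of variable, into integration over $\Omega_1(\varepsilon) = \Gamma_1(\varepsilon)\cup\Gamma_2(\varepsilon)$. Passing to the limit $\varepsilon\to 0$ and using the relation $\Omega_1 = \lim_{\varepsilon\to 0}\Omega_1(\varepsilon)$ recorded just before Theorem 16, the paired integral becomes the principal value integral over $\Omega_1$ in (58), while the tail $\sum_{k\geq 3}$ is converted band by band exactly as in $(a)$.

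The main obstacle is the algebraic bookkeeping of the first step: verifying that after substituting the explicit expressions for $\Psi_{n,t}$, $\Psi_{n,t}^{\ast}$ and $d_n(t)$ in terms of $\theta,\varphi$ and their derivatives at $\pi$, the combination $a_n(t)\Psi_{n,t}(x)\, dt$ really does collapse to $\pi^{-1}\phi(x,\lambda)p(\lambda)^{-1}\, d\lambda$ with the specific $\phi$ written in the statement. One also has to check that the sign changes of $\sin(\pi t)$ on the intervals $(-1,0)$ and $(0,1)$ combine correctly with the symmetry $\lambda_n(-t)=\lambda_n(t)$ so that both halves contribute the same orientation on $\Gamma_n$ rather than cancelling. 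Once this identity is in hand, both $(a)$ and $(b)$ follow immediately from Theorem 15.
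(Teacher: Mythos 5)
Your proposal is correct and follows essentially the same route as the paper: the paper derives this theorem in a single sentence by "changing the variable to $\lambda$ in (11) and (56) as was done in [19]" and using $\Gamma_{n}=\lim_{\varepsilon\rightarrow0}\Gamma_{n}(\varepsilon)$, $\Omega_{1}(\varepsilon)=\Gamma_{1}(\varepsilon)\cup\Gamma_{2}(\varepsilon)$, which is precisely the substitution $\lambda=\lambda_{n}(t)$, Jacobian $dt=-F^{\prime}(\lambda)(\pi p(\lambda))^{-1}d\lambda$, and $\varepsilon$-truncation argument you describe. Your write-up actually supplies more of the bookkeeping (the expression of $\Psi_{n,t}$, $\Psi_{n,t}^{\ast}$, $d_{n}(t)$ through $\theta,\varphi$ and the orientation check) than the paper itself, which delegates all of it to [19].
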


Now instead of Theorem 12 using Theorem 13 and Remark 5 and repeating the
proof of Theorems 14 and 15 we get the following spectral expansion of $L(V)$
for all $V>1/2.$

\begin{theorem}
Suppose that $V>1/2$.

$(a)$ If $V\neq V_{k}$ for $k\geq2$, then the spectral expansion for $L(V)$
has the form (11). Moreover, the spectral expansion for $L(V)$ in term of
$\lambda$ has the form (57).

$(b)$ If $V=V_{k},$ then the operator $L(V)$ has the spectral expansion
\begin{equation}
f(x)=\frac{1}{2\pi}\left(  \int\nolimits_{(-\pi,\pi]}\left[  \sum
\limits_{n=1}^{m(k)}a_{n}(t)\Psi_{n,t}(x)\right]  dt+\sum_{n=m(k)+1}^{\infty
}\int\nolimits_{(-\pi,\pi]}a_{n}(t)\Psi_{n,t}(x)dt\right)  ,
\end{equation}
where $m(k)$ is defined in Remark 5. Moreover, the spectral expansion for
$L(V_{k})$ in term of $\lambda$ can be obtained from (58) by replacing
$\Omega_{1}$ and $\sum_{k=3}^{\infty}$ with $%
{\textstyle\bigcup\nolimits_{n=1}^{m(k)}}
\Gamma_{n}$ \ and $\sum_{n=m(k)+1}^{\infty}$ respectively.
\end{theorem}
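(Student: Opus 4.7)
The plan is to follow the template of the proofs of Theorems 14 and 15 verbatim, substituting the finer classification of ESS given by Theorem 13 and Remark 5 for the restricted classification (Theorem 12) that was available only on $(1/2,\sqrt{5}/2)$. The two structural ingredients are Corollary 1, which rules out an ESS at infinity for every $V\neq\pm 1/2$, and the fact from Remark 5 that at $V=V_{k}$ the bands which contain at least one ESS form a finite initial segment $\Gamma_{1},\Gamma_{2},\ldots,\Gamma_{m(k)}$.

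For part $(a)$ I would argue as follows. The hypothesis $V\neq V_{k}$ for every $k\geq 2$ together with Remark 5 implies that $L(V)$ has no ESS, and Corollary 1 implies that $L(V)$ has no ESS at infinity. Summary 4$(a)$ then delivers the elegant expansion (11) immediately. To obtain the $\lambda$-form (57), I would repeat the change of variable $t\mapsto\lambda$ on each band $\Gamma_{n}$ that was carried out in [19] and recalled in the proof of Theorem 15$(a)$: replacing $\Psi_{n,t},\Psi_{n,t}^{\ast}$ and $d_{n}(t)$ by their explicit expressions in terms of $\theta,\varphi$, and using $\tfrac{dt}{d\lambda}=\bigl(\pi\sqrt{4-F^{2}(\lambda)}\bigr)^{-1}$, one rewrites the $t$-integral over $(-1,1]$ as a $\lambda$-integral over $\Gamma_{n}$ with weight $1/p(\lambda)$.

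For part $(b)$ I assume $V=V_{k}$. By Remark 5 the set of ESS is contained in $\Gamma_{1}\cup\cdots\cup\Gamma_{m(k)}$, so in the notation of Summary 4$(b)$ one has $N(\mathbb{E})=\{1,2,\ldots,m(k)\}$; Corollary 1 still gives absence of ESS at infinity. Applying Summary 4$(b)$ then yields (59). The $\lambda$-version is obtained as in Theorem 15$(b)$: the principal-value integral over $\bigcup_{n=1}^{m(k)}\Gamma_{n}$ is the image under the change of variable of $\lim_{\varepsilon\to 0}\int_{A(\varepsilon)}$, and $\bigcup_{n=1}^{m(k)}\Gamma_{n}$ plays the role that $\Omega_{1}=\Gamma_{1}\cup\Gamma_{2}$ plays in the single-critical-point case.

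The argument is essentially mechanical once the preceding results are in hand. The only mild obstacle is bookkeeping for the $\lambda$-form in part $(b)$: one must verify that for each $n\leq m(k)$ the change of variable on $\Gamma_{n}$ remains valid in the principal-value sense even though $\Gamma_{n}$ may contain an ESS, and that the images of the symmetric slices $(-\pi+\varepsilon,-\varepsilon)\cup(\varepsilon,\pi-\varepsilon)$ under $t\mapsto\lambda_{n}(t)$ exhaust $\Gamma_{n}$ symmetrically about the ESS, so that the principal-value limits on both sides match. This is precisely the computation carried out for $\Omega_{1}$ in the proof of Theorem 15$(b)$, now extended by the same formula to the larger initial segment of bands indexed by $1,\ldots,m(k)$.
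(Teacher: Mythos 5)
Your proposal matches the paper's proof, which consists precisely of the instruction to repeat the arguments of Theorems 14 and 15 with Theorem 13 and Remark 5 substituted for Theorem 12: absence of ESS (respectively $N(\mathbb{E})=\{1,\dots,m(k)\}$) from Remark 5, absence of ESS at infinity from Corollary 1, the $t$-form from Summary 4, and the $\lambda$-form by the change of variable of Theorem 15. The approach and all key ingredients are the same, so no further comment is needed.
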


\section{Conclusions}

\textbf{ }In this section first we discuss the similarities and differences
between the noncritical\textbf{ }case\textbf{ }($V\neq V_{k}$ for all
$k=2,3,...)$ and the critical case ($V=V_{k}$ for some $k\geq2)$, where
$V>1/2$ and the critical points $V_{2}<V_{3}<....$ are defined in Remark 5.
Then we illustrate the changes of the spectrum and spectral expansion of
$L(V)$ when $V$ changes from $1/2$ to $\infty$ and give some conjectures.

\textbf{Similarities: }By Theorem 10 in the both cases the operator $L(V)$ has
infinitely many spectral singularities and spectral singularity at infinity.
Hence by Summary 2 in the both cases the operator $L(V)$ is not an
asymptotically spectral operator and hence is not a spectral operator. Note
that by Definition 1 the spectral singularities is connected by the
boundlessness of the function $\frac{1}{d_{k}}$ and in the both
(critical\textbf{ }and noncritical) cases they are the unbounded function.

\textbf{Differences:} The detailed investigation of the spectral
singularities, namely, the study of the nonintegrability of $\frac{1}{d_{k}}$
helps us to see the differences between the critical and noncritical
cases,\ since the unbounded functions $\frac{1}{d_{k}}$ may became as
integrable as well as nonintegrable. It was the reason to introduce the new
types of spectral singularities called as ESS (see Definition 2) which is
defined by the nonintegrability of the functions $\frac{1}{d_{k}}$. Thus the
study only the boundlessness of $\frac{1}{d_{k}}$ or equivalently of the
projections $e(\lambda_{n}(t))$ does not explain the differences of these
cases and the properties of the critical case. By Theorems 12 and 13 and
Remark 5 the operator $L(V)$ in the critical case\textbf{ }$V=V_{k}$ has ESS,
while\textbf{ }in the noncritical case $V\neq V_{k}$ for $k=2,3,...$ has no
ESS. Moreover in the critical case $V=V_{k}$ the existence of the ESS does not
allow to be the spectral decomposition of the operator $L(V_{k})$ in the
elegant form (11) (see Theorems 14 and 16), while the spectral expansion of
the operator $L(V)$ for $V\neq V_{k}$ has an elegant form. Note that in the
critical\textbf{ }case\textbf{ }$V=V_{2}$ the spectral decomposition of the
operator $L(V)$ has no elegant form, because the functions $a_{1}(t)\Psi
_{1,t}$ and $a_{1}(t)\Psi_{1,t}$ have nonintegrable singularities at $t=0.$
Similarly in the critical\textbf{ }cases\textbf{ }$V=V_{k}$ for $k>2$ the
spectral decomposition of the operator $L(V_{k})$ has no elegant form, because
the functions $a_{n}(t)\Psi_{n,t}$ for $n=1,2,...,m$ has nonintegrable
singularities either at $t=0$ or at $t=\pi.$ However their sum is integrable
over $(-\pi,\pi]$. It means that, in the spectral expansions (56) and (59) the
bracket comprising the functions with indices $n=1,2$ and $n=1,2,...,m$
respectively is necessary. Note also that, if we consider the spectral
expansion in term of $\lambda$, then we need to use the $p.v.$ integral about
ESS, since the integrals about this point do not exist. We do not need the
$p.v.$ integral if and only if $V\neq V_{k}$ for $k=2,3,...$ . Moreover, one
can obtain a spectral expansion without parenthesis and $p.v.$ integrals if
and only if $V\neq V_{k}$ for $k=2,3,...$, that is, only in the
noncritical\textbf{ }case\textbf{.} It is a principle difference between the
noncritical and critical cases.\textbf{ }

Now we consider the changes of the spectrum and spectral expansion of $L(V)$
when $V$ changes from $1/2$ to $\infty.$ First we assume that the following
two principles formulated as conjectures hold.

\begin{conjecture}
(\textbf{Leaving principle.)} If $c$ increases from $0$ to $\infty,$ then all
Bloch eigenvalues of the operator $H(c)$ with the pure imaginary potential
$2ic\cos2x$ leave the real line. In other words, if $V$ increases from $1/2$
to $\infty,$ then all Bloch eigenvalues of $L(V)$ leave the real line.
\end{conjecture}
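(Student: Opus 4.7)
The conjecture is delicate because it concerns the long-$c$ asymptotic behavior of every fixed Bloch eigenvalue, while the structure theorems of Section 3 (Theorems 3--6) describe only the high-band regime $k>n_3(c)=\lfloor(2c+1)/2\rfloor+1$, whose complement sweeps across every fixed index $n$ as $c\to\infty$. My plan is therefore to combine a local analyticity-in-$c$ argument with a large-$c$ semiclassical reduction, and to state the conclusion in the form: for each fixed pair $(n,t)$ with $t\in(0,1)$ there exists $c^*(n,t)<\infty$ such that $\lambda_n(t,c)\notin\mathbb{R}$ for all $c>c^*(n,t)$.

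The first step would be to set up a \emph{transition criterion} in the parameter $c$. By Remark 1 the map $c\mapsto\lambda_n(t,c)$ is analytic wherever $\lambda_n(t,c)$ is simple, and Theorem 2(c) forces any real-to-nonreal flip to occur at a value of $c$ where $\lambda_n(t,c)$ is a multiple eigenvalue of $H_t(c)$. Since $F(\lambda,c)-2\cos\pi t$ is entire in both $\lambda$ and $c$, the critical parameters form a discrete set, so the subset of $c$ on which $\lambda_n(t,c)\in\mathbb{R}$ is a countable union of closed intervals in $c$; to prove the conjecture it suffices to show this set is bounded above for each $(n,t)$.

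The second step is a large-$c$ reduction. I would rescale $\lambda=c\mu$, which turns the eigenvalue equation into the semiclassical problem $-h^{2}y''+2i\cos(2x)y=\mu y$ with $h=c^{-1/2}$. For a band $\Gamma_n(c)$ of fixed, bounded index $n$ the rescaled eigenvalue $\mu_n(t,c)=\lambda_n(t,c)/c$ is confined, up to lower-order corrections, to the range $[-2i,2i]$ of the symbol $2i\cos 2x$. A complex WKB or Bohr--Sommerfeld analysis should then give a quantization condition whose roots lie at a positive distance from $\mathbb{R}$, so that $|\mathrm{Im}\,\lambda_n(t,c)|\gtrsim c$ for $c$ large; combining this quantitative lower bound with the local analyticity of Step 1 closes the argument, because the real intervals provided by Step 1 cannot extend past the onset of the semiclassical imaginary gap.

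The main obstacle is the rigorous semiclassical step: the symbol $2i\cos(2x)$ is purely imaginary, so every $\mu\in[-2i,2i]$ is a non-transverse turning value and the standard WKB connection formulas do not apply. One likely needs either a microlocal analysis adapted to the PT-symmetric case, or an alternative route via the asymptotics of $F(\lambda,c)$ itself, using an integral representation of the Hill discriminant together with saddle-point estimates along well-chosen vertical complex lines. A secondary obstacle is uniformity near $t=0$ and $t=1$, where the band endpoints coincide with periodic/antiperiodic eigenvalues of Theorems 3 and 8; some of those endpoints remain real to arbitrarily high $c$, so the argument must be phrased in terms of the \emph{interior} of each band and then matched to the real periodic/antiperiodic data at the endpoints.
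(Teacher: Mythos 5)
This statement is labelled a \emph{conjecture} in the paper: the author does not prove it, and offers in Section 5 only a heuristic discussion (the transition criterion of Theorem 2$(c)$, the PD/PN interlacing of Theorem 8 and Summary 9 which identifies which periodic eigenvalues are able to collide and leave, the proven cases from [22] for $\lambda_1$, $\lambda_2$ and for the antiperiodic eigenvalues, and the growth $V_k\rightarrow\infty$ from Theorem 3$(a)$). Your Step 1 essentially reproduces the part of that discussion which is actually rigorous: real-to-nonreal transitions can only occur at multiple eigenvalues, so the set of $c$ for which $\lambda_n(t,c)$ is real is closed and its boundary points are degeneracies. That step, in your version as in the paper's, establishes nothing about whether this set is bounded above, which is the entire content of the conjecture.

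The genuine gap is therefore your Step 2, and you have identified it yourself. The claim that a complex WKB quantization condition for $-h^2y''+2i\cos(2x)y=\mu y$ has roots at positive distance from $\mathbb{R}$, giving $\left\vert \operatorname{Im}\lambda_n(t,c)\right\vert \gtrsim c$, is asserted rather than derived, and the standard connection formulas do not apply precisely because the symbol $2i\cos 2x$ is purely imaginary, so every energy in its range is a degenerate turning value. Without a rigorous version of this step the argument does not close: Step 1 alone permits the real set in $c$ to be an unbounded union of closed intervals. There is also a uniformity issue you gloss over: any bound of the form $c>c^*(n,t)$ for fixed $(n,t)$ does not by itself yield the statement that the \emph{band} $\Gamma_n$ has no real points, since the endpoints $t=0,1$ and the $t$-dependence of $c^*$ must be controlled; the paper's intended mechanism (the $k$-th entry and exit points of the graph of $F$ merging and the whole arc of $G(F)$ between them rising out of the strip $S(2)$) is designed exactly to handle the whole band at once. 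In short, your proposal is a reasonable and genuinely different research program (semiclassical rescaling versus the paper's discriminant-graph and PD/PN collision heuristics), but neither it nor the paper constitutes a proof, and the decisive analytic input is missing in your attempt.
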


\begin{conjecture}
(\textbf{Irreversibility principle.)} The Bloch eigenvalues never came back to
the real line after leaving it.
\end{conjecture}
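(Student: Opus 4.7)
The plan is to argue by contradiction. Suppose that for some Bloch index $n$ and some fixed $t_{0}\in[0,1]$, the eigenvalue $\lambda_{n}(t_{0},c)$ is real at $c=c_{0}$, becomes nonreal on an interval $(c_{0},c_{1})$, and then returns to the real line at $c=c_{1}$. By Theorem 2(c), at each transition $c=c^{*}\in\{c_{0},c_{1}\}$ the value $\lambda_{n}(t_{0},c^{*})$ must be a multiple eigenvalue of $H_{t_{0}}(c^{*})$, and by Theorem 4(b) its multiplicity is exactly $2$. Since $F(\lambda,c)$ is jointly analytic in $(\lambda,c)$, such critical pairs $(c^{*},\lambda^{*})$ are isolated in parameter space, so the hypothetical re-entry is localized at a single critical point that I would analyze in detail.

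Next, I would carry out a local Puiseux-type analysis at the critical pair $(c^{*},\lambda^{*})$ by expanding the characteristic equation:
\[
F(\lambda,c)-2\cos\pi t_{0}=A(c-c^{*})+B(\lambda-\lambda^{*})^{2}+\cdots,
\]
where $A=\partial_{c}F(\lambda^{*},c^{*})$ and $B=\tfrac{1}{2}\partial_{\lambda}^{2}F(\lambda^{*},c^{*})\neq 0$ (the non-vanishing of $B$ uses the multiplicity being exactly $2$). The two merging branches then satisfy $\lambda(c)-\lambda^{*}\sim\pm\sqrt{-A(c-c^{*})/B}$, which are real for $c<c^{*}$ and nonreal for $c>c^{*}$ precisely when the real ratio $A/B$ is positive, and with the reverse direction exactly when $A/B<0$. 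Thus the irreversibility principle is equivalent to proving that at every critical pair with $c^{*}>0$ and $\lambda^{*}\in\mathbb{R}$, one has $A/B>0$.

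To pin down this sign, I would use the explicit representation of $A$ and $B$ via the Floquet solutions at $(\lambda^{*},c^{*})$. Differentiating the initial-value problem for $\theta(\cdot,\lambda,c)$ and $\varphi(\cdot,\lambda,c)$ with respect to $c$ (using $\partial_{c}(2ic\cos 2x)=2i\cos 2x$) and applying variation of parameters, one obtains $A$ as an explicit integral of $2i\cos 2x$ against a quadratic expression in the (up to scalar unique) Floquet solution at the double point; a parallel Hadamard-type formula expresses $B$ as a related quadratic form in the same solution. The PT-symmetry $q(-x)=\overline{q(x)}$ together with the reality of $\lambda^{*}$ imposes parity and conjugation constraints on that Floquet solution, and combining these with the initial conditions $\theta(0)=\varphi'(0)=1$, $\theta'(0)=\varphi(0)=0$ should extract a definite sign for $A/B$ uniformly along the family.

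The main obstacle is precisely this last step. Locally, "leaving" and "re-entering" the real line are interchanged by flipping the sign of $A/B$, so the Puiseux analysis by itself does not distinguish them; irreversibility is a genuinely global property of the family $c\mapsto H(c)$. Extracting a definite sign from the generally complex-valued Floquet solution at a real double eigenvalue of the pure imaginary Mathieu operator appears to require a structural input beyond PT-symmetry alone — perhaps a variational characterization of double eigenvalues tailored to $2ic\cos 2x$, or a continuity/monodromy argument that connects every critical pair to the first critical point $V_{2}$ for which the sign of $A/B$ can be computed directly from the analysis of [22]. This is presumably why the statement is presented as a conjecture rather than a theorem.
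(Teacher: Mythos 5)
The statement you are trying to prove is presented in the paper as a \emph{conjecture}; the paper offers no proof of it, only heuristic discussion in Section 5 (the PD/PN alternation of Theorem 8, the behaviour of the graph of $F$, and the remark that conjugate pairs ``cannot coincide'' after leaving the real line). So there is no proof in the paper to compare yours against, and your proposal is likewise not a proof: as you yourself acknowledge in the final paragraph, the decisive step --- showing that the real ratio $A/B$ has a fixed sign at every real critical pair $(c^{*},\lambda^{*})$ --- is left open. That is the genuine gap, and it is exactly the content of the conjecture: your reduction shows that irreversibility is equivalent to a uniform sign condition at all real double Bloch eigenvalues, but it does not establish that condition. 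The reduction itself is sensible and consistent with the paper's own machinery (Theorem 2(b) already shows that a simple nonreal eigenvalue stays nonreal as $c$ increases, so re-entry can only occur through a multiple eigenvalue, and Theorem 4(b) caps the multiplicity at $2$ in the relevant range), but it converts the conjecture into another unproved statement rather than resolving it.

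Two secondary points in the setup also need repair before the reduction is rigorous. First, the isolatedness of critical pairs does not follow merely from joint analyticity of $F(\lambda,c)$: the common zero set of $F_{\lambda}=0$ and $F=2\cos\pi t_{0}$ in the $(\lambda,c)$-plane is generically discrete but could a priori contain a curve, and ruling that out requires an argument (e.g.\ the nondegeneracy $F_{\lambda\lambda}\neq0$ from Theorem 4(b) plus the implicit function theorem applied as in Remark 3 of the paper, which gives a locally unique branch $\lambda(c)$ of critical points, not isolated points --- so the branches must be tracked, not dismissed). Second, your normal form $F(\lambda,c)-2\cos\pi t_{0}=A(c-c^{*})+B(\lambda-\lambda^{*})^{2}+\cdots$ silently assumes $A=\partial_{c}F(\lambda^{*},c^{*})\neq0$; if $A=0$ the two branches need not split off the real axis at all at that $c^{*}$, and the Puiseux dichotomy ``real for $c<c^{*}$, nonreal for $c>c^{*}$'' breaks down. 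Neither issue is fatal to the program, but both must be addressed, and the central sign determination remains, as you say, the reason this is a conjecture and not a theorem.
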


\textbf{The discussion of the} \textbf{Leaving principle. }Conjecture 1 was
proved in [22] for the Bloch eigenvalues $\lambda_{1}(t,V)$ and $\lambda
_{2}(t,V)$. Moreover, in [22] we have proved that if $V$ moves from $1/2$ to
$V_{2},$ where $V_{2}$ is the second critical point, then the all antiperiodic
eigenvalues $\lambda_{n}(1,V)$ leave he real line while the periodic
eigenvalues $\lambda_{n}(0,V)$ moves over real line (see Summary 1). Since
$\lambda_{n}(t,V)$ continuously depends on $t$ after antiperiodic eigenvalues
$\lambda_{n}(1,V)$ the Bloch eigenvalues $\lambda_{n}(t,V)$ for $t$ close to
$1$ leave the real line.

The periodic eigenvalues do not leave the real line for $V\in\left(
1/2,V_{2}\right)  $ due to the following. By Theorem 2$(c)$ if the eigenvalue
$\lambda(t,V)$ changes from real to nonreal when $V$ moves from the left to
the right of the constant$\ V(t)>1/2$, then $\lambda(t,V(t))$ is a multiple
eigenvalue, where $t$ is a fixed number from $\left[  0,1\right]  .$ The first
periodic eigenvalues is simple for $V\in\left(  1/2,V_{2}\right)  $ and hence
it can not leave the real line until being the multiple eigenvalue. As was
proven in [22] (see Case 1 and Summary 1 in the introduction), if $V$ moves
from the left to the right of $V_{2}$ then the first and the second
eigenvalues get close to each other for $V<V_{2},$ the equality $\lambda
_{1}(0,V_{2})=\lambda_{2}(0,V_{2})$ holds and they leave the real line for
$V>V_{2}$. Moreover the last equality is possible since both $\lambda
_{1}(0,V_{2})$ and $\lambda_{2}(0,V_{2})$ are the PN eigenvalues (see Summary
9 and (15)).

Now we show that the eigenvalue $\lambda_{3}(0,V)$ moving over real line may
become the double eigenvalues if and only if $\lambda_{3}(0,V)=\lambda
_{4}(0,V)$. If $V\in(V_{2},\sqrt{5}/2),$ then on the left side of $\lambda
_{3}(0,V)$ there is not a real periodic eigenvalue, since $\lambda_{1}(0,V)$
and $\lambda_{2}(0,V)$ are nonreal eigenvalues (see Summary 1). Moreover by
Summary 9 and Theorem 8, $\lambda_{3}(0,a)$ and $\lambda_{4}(0,a)$ are PD
eigenvalues while $\lambda_{5}(0,a)$ is a PN eigenvalue and lies on the right
of $\lambda_{4}(0,a).$ Thus by moving over real line the eigenvalue
$\lambda_{3}(0,a)$ have no possibility to coincide with the eigenvalues lying
on the left of $\lambda_{3}(0,a)$ and $\lambda_{4}(0,a)$ have no possibility
to coincide with the eigenvalues lying on the right of $\lambda_{4}(0,a),$
since PN and PD eigenvalue are different (see (15)). Using these arguments and
taking into account that all Bloch eigenvalues sooner or later leave the real
line we conclude that there exists $V_{3}$\ such that $\lambda_{3}%
(0,V_{3})=\lambda_{4}(0,V_{3}).$ Moreover by Summary 1 we have $V_{3}\geq
\frac{\sqrt{5}}{2}>V_{2}.$ In the same way we conclude that there exist a
number $V_{k}$ such that if $1/2<V<V_{k},$ $V=V_{k}$ and $V>V_{k},$ then
$\left(  2k-3\right)  $-th and $\left(  2k-2\right)  $-th periodic eigenvalues
$\lambda_{2k-3}(0,V)$ and $\lambda_{2k-2}(0,V)$ are real numbers,
$\lambda_{2k-3}(0,V_{k})=\lambda_{2k-2}(0,V_{k})\in\mathbb{R},$ and both of
them are nonreal respectively. Moreover Theorem 3$(a)$ implies that
$V_{k}\rightarrow\infty$ as $k\rightarrow\infty.$ Thus relying on these
arguments, Theorem 3$(a)$ and Theorem 13 we believe that the following
conjecture holds.

\begin{conjecture}
There exists a sequence $V_{2}<V_{3}<...$ of real numbers, called the critical
points, approaching infinity such that the following hold. If $V_{k}%
<V<V_{k+1},$ then $\lambda_{1}(0,V),$ $\lambda_{2}(0,V),...,\lambda
_{2k-2}(0,V)$ are the nonreal eigenvalues and $\lambda_{2k-1}(0,V),\lambda
_{2k}(0,V),...$ are the real simple eigenvalues. If $V=V_{k+1},$ then
$\lambda_{1}(0,V),$ $\lambda_{2}(0,V),...,\lambda_{2k-2}(0,V)$ are the nonreal
eigenvalues, $\lambda_{2k-1}(0,V)=\lambda_{2k}(0,V)$ is a real double
eigenvalue and $\lambda_{2k+1}(0,V),$ $\lambda_{2k+2}(0,V),...$ are the real
simple eigenvalues.
\end{conjecture}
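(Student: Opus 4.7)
The plan is to prove the conjecture by induction on $k\geq 2$. The base case $k=2$ (existence of $V_{2}$ with the stated structure on $(1/2,V_{2})$, at $V_{2}$, and on an interval $(V_{2},V_{3})$ for some $V_{3}>V_{2}$) is supplied by Summary 1 and the results of [22]. Throughout I will use the leaving and irreversibility principles (Conjectures 1 and 2) as given, together with Theorems 2(c), 3(a), 5, 7, 8, and Summary 6, which together provide the structural ingredients: continuity and analyticity of $\lambda_{n}(0,V)$ in $V$, the requirement that a transition from real to nonreal passes through a multiple eigenvalue, the fact that every multiple periodic eigenvalue has geometric multiplicity one, the disjointness $\sigma(D(c))\cap\sigma(N(c))=\varnothing$, and the asymptotic PN/PD labeling of the periodic eigenvalues for small $c$.

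For the inductive step, assume $V_{2}<V_{3}<\cdots<V_{k}$ have been constructed with the claimed properties. By the inductive hypothesis, for $V$ slightly greater than $V_{k}$ the eigenvalues $\lambda_{1}(0,V),\dots,\lambda_{2k-2}(0,V)$ are nonreal while $\lambda_{2k-1}(0,V),\lambda_{2k}(0,V),\lambda_{2k+1}(0,V),\dots$ are real and simple. Define
\[
V_{k+1}\;=\;\inf\bigl\{V>V_{k}:\text{some }\lambda_{j}(0,V)\text{ with }j\geq 2k-1\text{ fails to be real and simple}\bigr\}.
\]
Conjecture 1 gives $V_{k+1}<\infty$. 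At $V=V_{k+1}$, by continuity and Theorem 2(c), some $\lambda_{j}(0,V_{k+1})$ with $j\geq 2k-1$ is a multiple eigenvalue, and by Theorem 7 its geometric multiplicity is one, so algebraic multiplicity is at least two. Summary 6 ensures that the PD/PN label of each analytic branch $V\mapsto\lambda_{n}(0,V)$ is constant in $V$ (the two classes are disjoint so no branch can switch), and Theorem 8 identifies these labels for small $c$: consecutive pairs $(\lambda_{2m-1}(0),\lambda_{2m}(0))$ share a common PN or PD label (PN for $m$ odd, PD for $m$ even), while adjacent pairs across the gap $(\lambda_{2m}(0),\lambda_{2m+1}(0))$ have opposite labels. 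Hence the multiple eigenvalue realizing the infimum must be a merger of two consecutive indices, namely $\lambda_{2m-1}(0,V_{k+1})=\lambda_{2m}(0,V_{k+1})$ for some $m\geq k$. One then argues that $m=k$: the inductive hypothesis (plus irreversibility) says $\lambda_{1},\dots,\lambda_{2k-2}$ are already off the real line, so on the real line $\lambda_{2k-1}(0,V)$ is the smallest real periodic eigenvalue for $V$ near $V_{k+1}$, has nothing of its own class to its left (by the analogue of Theorem 5(a)), and hence can collide only with $\lambda_{2k}(0,V)$. Taking $V_{k+1}$ equal to the $V$ at which this leftmost collision happens, and invoking Theorem 2(c) together with Conjecture 2 just past $V_{k+1}$, one obtains the structure asserted at $V=V_{k+1}$ and on $(V_{k+1},V_{k+2})$.

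The principal obstacle is the step showing that the \emph{first} merger to the right of $V_{k}$ involves the leftmost surviving pair $(\lambda_{2k-1},\lambda_{2k})$ and not some outer pair $(\lambda_{2m-1},\lambda_{2m})$ with $m>k$; the PT-symmetric framework and the constancy of the PD/PN label pin down \emph{which} indices can merge, but not the order in which mergers occur as $V$ grows. I would attempt to close this by a monotonicity analysis of the Hill discriminant $F(\lambda,c)$ in $c$: the values of $F$ at its interior local minima on each interval $[\lambda_{2m-1}(0),\lambda_{2m}(0)]$ control how close that pair is to merging, and a careful study of $\partial_{c}F$ at the minimizers (using the analytic dependence emphasized in Remark 3 and the rectangle/disk enclosure estimates of Theorems 3 and 4) should show that the minimum over the leftmost surviving interval reaches the value $2$ strictly earlier than those over the outer intervals. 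Finally, $V_{k}\to\infty$ follows from Theorem 3(a): for any fixed $c$ the eigenvalues in every disk $D_{2c}((2n)^{2})$ with $n>n_{1}(c)=\lfloor(c+1)/2\rfloor$ are real and simple, so only finitely many periodic eigenvalues can be nonreal at any finite $V$, preventing $\{V_{k}\}$ from accumulating at a finite point.
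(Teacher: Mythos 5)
The statement you are proving is Conjecture~3 of the paper: the author explicitly does \emph{not} prove it, offering only a motivating discussion (the paragraphs on the ``Leaving principle'' and the graph of the Hill discriminant) and deferring the structure of the spectrum to ``Suppose that conjectures 1--3 hold.'' Your proposal follows essentially that same motivating line --- Theorem~2$(c)$ to force a multiple eigenvalue at any real-to-nonreal transition, Theorem~7 for geometric multiplicity one, Summary~6 plus Theorem~8 for the constancy and alternation of the PD/PN labels so that only the consecutive pairs $(\lambda_{2m-1}(0),\lambda_{2m}(0))$ can collide, and Theorem~3$(a)$ for $V_{k}\rightarrow\infty$ --- and it is conditional on Conjectures~1 and~2, which are themselves unproved. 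So even if every step you wrote were airtight, the output would be a conditional statement, not a proof of the conjecture.

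More importantly, your own text identifies, but does not close, the step on which everything hinges: that the \emph{first} merger to the right of $V_{k}$ is the merger of the leftmost surviving pair $(\lambda_{2k-1},\lambda_{2k})$ rather than of some outer pair $(\lambda_{2m-1},\lambda_{2m})$ with $m>k$. The PD/PN bookkeeping only restricts \emph{which} pairs may collide; it says nothing about the \emph{order} in which the interior local minima of $F(\lambda,c)$ on the intervals $\left[\lambda_{2m-1}(0),\lambda_{2m}(0)\right]$ reach the value $2$ as $c$ grows, and your proposed remedy (``a careful study of $\partial_{c}F$ at the minimizers \dots should show'') is a program, not an argument; no monotonicity of $F$ in $c$ at the minimizers is established anywhere in the paper, and Theorems~3 and~4 give only enclosure regions, not the comparison of collision times you need. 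If an outer pair merged first, your inductive definition of $V_{k+1}$ as an infimum would produce a critical point at which the wrong indices degenerate, and the index pattern asserted in the conjecture would fail. This is precisely the gap that makes the statement a conjecture rather than a theorem, and your proposal does not fill it.
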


These conjecture for $k=2$ was proved in [22] and is explained in the
introduction. Let us give some explanation for this conjecture. The
perturbation $2ic_{2}\cos2x$ of norm $2c_{2},$ where $c_{2}$ $=\sqrt
{4V_{2}^{2}-1}$ is enough to move the first and second eigenvalues $0$ and $4$
of the unperturbed operator $H_{0}(0)$ so that $\lambda_{1}(0,c_{2})$ and
$\lambda_{2}(0,c_{2})$ coincide, since $2c_{2}>2$. However this perturbation
is not enough to move the third and \ forth eigenvalue $4$ and $16$ of
$H_{0}(c)$ so that the equality $\lambda_{3}(0,c)=\lambda_{4}(0,c)$ holds for
some $c\in(0,2)$ since the distance between $4$ and $16$ is $12.$ Therefore
$V_{3}>V_{2}.$ The same explanation show that $V_{k+1}>V_{k}$. Moreover, by
Theorem 3$(a)$ and Notation 1 $\lambda_{2k}(0,c)$ is a real eigenvalue if
$c<k-1.$ It implies that $c_{k}\geq k-1$, where $c_{k}$ $=\sqrt{4V_{k}^{2}%
-1},$ and hence $V_{k}\rightarrow\infty$ as $k\rightarrow\infty.$

Now let us explain why the Bloch eigenvalues $\lambda_{n}(t,V)$ leave the real
line for all $t\in(0,1).$ In [22] it was proved if $1/2<V$ $<V_{2},$ then
$\operatorname{Re}(\Gamma_{2k-1}\cup\Gamma_{2k})=\left[  \lambda
_{2k-1}(0),\lambda_{2k}(0)\right]  $ (see Case 1 in introduction). It implies
that $\lambda_{2k-1}(t)$ and $\lambda_{2k}(t)$ lie between $\lambda_{2k-1}(0)$
and $\lambda_{2k}(0)$ if they are real numbers. On the other hand, by
Conjecture 3 the eigenvalues $\lambda_{2k-1}(0)$ and $\lambda_{2k}(0)$ leave
the real line when $V$ moves to the right of $V_{k+1}.$ Therefore the
eigenvalues $\lambda_{2k-1}(t)$ and $\lambda_{2k}(t)$ leave the real line for
all $t\in\lbrack0,1]$ and hence as a result no parts of $\Gamma_{2k-1}$ and
$\Gamma_{2k}$ remain real when $V$ moves to the right of $V_{k+1}.$ Moreover,
it is clear that the eigenvalues $\lambda_{n}(t,V)$ leave the real line with
decreasing order of $t$ from $1$ to $0.$ These arguments and Theorem 6 imply
the following conclusions about the spectrum of $L(V)$.

\begin{conclusion}
Suppose that conjectures 1-3 hold and $V>1/2$. Then

$(a)$ For each $k\geq2$ the spectrum of $L(V_{k})$ has the following properties:

\textbf{Separation:} The bands $\Gamma_{2s-3}$ and $\Gamma_{2s-2}$ for $s<k$
are nonreal separated curves symmetric with respect to the real line.

\textbf{Connection by the end point:} The bands $\Gamma_{2k-3}$ and
$\Gamma_{2k-2}$ are connected by $\lambda_{2k-3}(0),$ that is, $\Gamma
_{2k-3}\cap\Gamma_{2k-2}=\left\{  \lambda_{2k-3}(0)\right\}  $.

\textbf{Connection by the interior point:} The bands $\Gamma_{2s-3}$ and
$\Gamma_{2s-2}$ for $s>k$ are connected by double eigenvalue $\lambda
_{2s-3}(t)=\lambda_{2s-2}(t),$ where $t\in(0,\pi),$ that is, $\Gamma
_{2k-3}\cap\Gamma_{2k-2}=\left\{  \lambda_{2s-3}(t)\right\}  $.

$(b)$ If $V_{k}<V<V_{k+1},$ then \ the \textbf{Separation }and
\textbf{Connection by the interior point }$\lambda_{2s-3}(t)=\lambda
_{2s-2}(t)$ occurs for $s\leq k$ and $s>k$ respectively. In this case
\textbf{Connection by the end points} does not occurs.

$(c)$ The number $V_{k}$ is a point after which no parts of the $\left(
2k-3\right)  $-th and $\left(  2k-2\right)  $-th bands are real. That is why
it is natural to call it as the $k$-th critical point.
\end{conclusion}

\textbf{Explanation of the conjectures by using the graph of the Hill
discriminant }$F.$ First let us explain Summary 1 in term of the graph. In the
case $V<V_{2}$ the first point of entry and the first point of exit of the
graph $G(F)$ defined in the proof of the Theorem 5 are $\left(  \lambda
_{1}(0),2\right)  $ and $\left(  \lambda_{2}(0),2\right)  $ respectively. The
part $\left\{  (\lambda,F(\lambda)):\lambda\in\left[  \lambda_{1}%
(0),\lambda_{2}(0)\right]  \right\}  $ of the graph $G(F),$ called as the
first part, consists of the points $\left(  \lambda_{1}(t),2\cos t\right)  $
and $\left(  \lambda_{2}(t),2\cos t\right)  $ for real $\lambda_{1}(t)$ and
$\lambda_{2}(t)$. If $V$ approaches $V_{2}$ from the left then the first and
second periodic eigenvalues get close to each other and the real eigenvalues
$\lambda_{1}(t)$ and $\lambda_{2}(t)$ become nonreal numbers (\textbf{see Case
1-Case 3} in introduction). It means that the first part of the graph of $F$
is leaving the strip $S(2)$. In other word the graph of $F$ is rising up. As
$V$ reaches $V_{2}$ we get the equality $\lambda_{1}(0)=\lambda_{2}(0)$ which
means that the first point of entry $\left(  \lambda_{1}(0),2\right)  $ and
the first point of exit $\left(  \lambda_{2}(0),2\right)  $ coincides and the
line $y=2$ becomes tangent line to the curve $y=F(\lambda)$ at the point
$\left(  \lambda_{1}(0),2\right)  .$ If $V$ moves to the right of $V_{2},$
then the eigenvalues $\lambda_{1}(0)$ and $\lambda_{2}(0)$ get off the real
line. Therefore the first part of the graph of $F$ completely get out of the
strip $S(2)$.

In general Corollary 2 shows that for each $k$ both the first coordinates of
the $k$-th points of entry and exit are either PN or PD eigenvalues. Therefore
they may get close to each other coincides and get out of the strip as $V$
increases and moves from the left to the right of $V_{k+1}.$ It means that the
$k$-th part of the graph of $F$ get out of the strip $S(2)$ and by Theorem
6$(b)$ no parts of the $\left(  2k-1\right)  $-th and $2k$-th bands remain real.

Let us stress also the following. As is noted in above first the antiperiodic
eigenvalues $\lambda(1,V)$ leave he real line and after antiperiodic
eigenvalues the Bloch eigenvalues $\lambda(t,V)$ for $t$ close to $1$ leave
the real line. On the other hand, in the end of the proof of Theorem 6$(a)$ we
noted that the graph of $F$ between the $k$-th point of entry and the $k$-th
point of exit has the form as it is sketched in the picture of Section 3
between the second points of entry and exit. It implies that when the points
of entry and exit get close to each other, then the eigenvalues $\lambda(t,V)$
leave the real line with decreasing order of $t$ from $1$ to $0.$ Finally the
periodic eigenvalues live the real line. Moreover, Theorem 5$(a)$ implies that
if all periodic eigenvalues lying in the left of some number $\mu$ leave the
real number then all spectrum of $L(V)$ on the left of $\mu$ left the real line.

Now we discuss the spectral expansion by assuming that the following
conjecture holds.

\begin{conjecture}
(\textbf{Simplicity principle.)} The nonreal periodic and antiperiodic
eigenvalues are the simple eigenvalues.
\end{conjecture}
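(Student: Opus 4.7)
The plan is to argue by contradiction. Assume $\lambda^{*}=\lambda_{n}(0,V^{*})$ is a nonreal periodic eigenvalue of $L(V^{*})$ of algebraic multiplicity $\geq 2$ for some $V^{*}>1/2$ (the antiperiodic case is identical with $F=-2$ replacing $F=2$). By Theorem 7 its geometric multiplicity is $1$, so $F(\lambda^{*},V^{*})=2$ and $F'(\lambda^{*},V^{*})=0$. The PT-symmetry identity $F(\bar{\lambda},V)=\overline{F(\lambda,V)}$, which follows from Summary 8(c) together with analytic continuation of the entire function $F$ in $\lambda$, then forces $\bar{\lambda}^{*}$ also to be a double periodic eigenvalue. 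Hence multiple nonreal periodic eigenvalues necessarily come in conjugate pairs.

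First I would track, as the real parameter $V$ decreases from $V^{*}$ toward $1/2$, the two analytic branches $\mu_{1}(V),\mu_{2}(V)$ of $F(\cdot,V)=2$ that coalesce into $\lambda^{*}$ at $V=V^{*}$; their existence on a punctured neighborhood of $V^{*}$ follows from Weierstrass preparation applied to $F(\lambda,V)-2$. Two a priori cases must be excluded. (i) $\mu_{2}(V)=\overline{\mu_{1}(V)}$ for $V$ slightly below $V^{*}$: continuity then forces $\mu_{1}(V^{*})=\overline{\mu_{1}(V^{*})}=\lambda^{*}$ to be real, contradicting the nonreality of $\lambda^{*}$. (ii) $\mu_{1}$ and $\mu_{2}$ are not mutual conjugates: PT symmetry then produces a second pair $\overline{\mu_{1}(V)},\overline{\mu_{2}(V)}$ merging at $\bar{\lambda}^{*}$, so four periodic branches meet in a conjugate-symmetric configuration.

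The heart of the argument is the elimination of case (ii). I would propagate $\mu_{1}(V),\mu_{2}(V)$ all the way down to a small positive $c$ (equivalently $V$ just above $1/2$) and invoke Summary 1, Theorem 8 and Corollary 2 to locate the origins of these branches in the PN/PD classification. By Conjecture 3 (or, where applicable, by the established results of Section 3) each nonreal branch is born at a \emph{real} double periodic eigenvalue, and by Summary 6 the two branches born in one bifurcation inherit a well-defined PN or PD type from the real eigenvalue that produced them. Since $\sigma(D(c))\cap\sigma(N(c))=\varnothing$ by (15), two branches of different types can never collide; one then tries to show that two branches born at distinct critical points $V_{k}$ and eventually colliding at $(\lambda^{*},V^{*})$ must be of incompatible types under the alternating PN/PD ordering dictated by Theorem 8 and Corollary 2, yielding the desired contradiction.

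The main obstacle is exactly this last step. The PN/PD classification is rigorously established only in the small-$c$ (perturbative) regime, whereas case (ii) requires following each branch over the full parameter range $(1/2,V^{*})$. Preservation of the PN/PD label along a nonreal branch does hold locally wherever no collision of Dirichlet or Neumann eigenvalues occurs, but to use this globally one appears to need an a priori absence of collisions among same-type branches, which is essentially the conjecture itself. A complete proof therefore seems to require either a new invariant, beyond PN/PD, distinguishing the nonreal periodic branches spawned at different $V_{k}$, or a direct analytic/topological control of the complex trajectories $V\mapsto\mu_{j}(V)$ in the $\lambda$-plane that rules out quadruple coalescences by counting or homotopy arguments adapted from Theorem 6.
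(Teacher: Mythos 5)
You should first note that the statement you are trying to prove is stated in the paper as a \emph{conjecture} (Conjecture 4, the ``Simplicity principle''), not as a theorem: the paper offers no proof, only a plausibility argument. That argument runs along almost exactly the lines you propose. The paper observes that a nonreal $2$-periodic eigenvalue and its conjugate cannot coincide (it invokes Conjecture 2 for this, whereas your case (i) gets the same exclusion more cleanly from continuity alone, since two mutually conjugate branches can only merge at a real point); it then uses Theorem 8, Corollary 2 and the disjointness $\sigma(D(c))\cap\sigma(N(c))=\varnothing$ of (15) to argue that \emph{neighboring} nonreal periodic (respectively antiperiodic) eigenvalues carry alternating PD/PN (respectively AD/AN) types and therefore cannot collide; and for \emph{non-neighboring} eigenvalues it merely asserts that a collision ``is the unlikely event, since they must pass through the $(2k+1)$-th band,'' concluding that the conjecture ``holds with high probability.'' So your case (ii) is precisely the point at which the paper itself stops short of a proof.

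The gap you identify at the end of your proposal is therefore genuine and is the same gap the paper concedes. Two specific weaknesses deserve emphasis. First, the PD/PN classification of Theorem 8 is established only in the perturbative regime of small $c$, and Summary 6 guarantees $\sigma(D(c))\cap\sigma(N(c))=\varnothing$ for each fixed $c$ but does not by itself give you a globally well-defined type label along a branch $V\mapsto\mu_j(V)$ once branches of Dirichlet or Neumann eigenvalues can themselves collide in the complex plane; asserting that the label persists over all of $(1/2,V^{*})$ presupposes a form of the non-collision statement you are trying to prove. Second, even granting persistent labels, the alternation argument only excludes collisions of \emph{adjacent} branches; a quadruple, conjugate-symmetric coalescence of branches originating from distinct critical points $V_k$ is not excluded by any result in Sections 2--3, and the ``must pass through the intermediate band'' heuristic is a topological separation claim in the complex plane that neither you nor the paper substantiates. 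Your proposal is thus a faithful and somewhat sharpened reconstruction of the paper's informal reasoning, but, like the paper, it does not constitute a proof, and you are right to flag that a new invariant or a genuine topological counting argument would be needed to close it.
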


Let us give some explanation for this conjecture. The nonreal periodic
(antiperiodic) eigenvalues $\lambda_{2k-1}(0)$ and $\lambda_{2k}(0)$
($\lambda_{2k-1}(1)$ and $\lambda_{2k}(1)$) are complex conjugate numbers and
by Conjecture 2 they do not came back to real line. That is why they can not
coincide. On the other hand, Theorem 8 shows that if $\lambda_{2k-1}(0)$ is PN
(PD) eigenvalues lying in upper (lower) half-plane then the neighboring
nonreal complex periodic eigenvalues $\lambda_{2k-3}(0)$ and $\lambda
_{2k+1}(0)$ lying in the same half-plane are PD (PN) eigenvalues. Therefore
$\lambda_{2k-1}(0)$ can not be multiple eigenvalue due to the neighboring
eigenvalues. Similarly it follows from Theorem 9 and Summary 1 that if
$\lambda_{2k-1}(1)$ is AD (AN) eigenvalues lying in upper (lower) half-plane
then the neighboring nonreal antiperiodic eigenvalues $\lambda_{2k-3}(0)$ and
$\lambda_{2k+1}(0)$ lying the same half-plane are AN (AD) eigenvalues.
Therefore $\lambda_{2k-1}(1)$ also can not be multiple eigenvalue due to the
neighboring eigenvalues. On the other hand the overlapping of the unneighbored
eigenvalues $\lambda_{2k-1}(1)$ and $\lambda_{2k+3}(1)$ is the unlikely event,
since they must pass through the $\left(  2k+1\right)  $-th band
$\Gamma_{2k+1}(1)$ which is the continuous curve lying between these
eigenvalues. That is why Conjecture 4 holds with high probability.

By Summary 1 and Conjectures 2 and 4 for $V>1/2$ the operator $L(V)$ has no
multiple antiperiodic eigenvalues. Moreover by Summary 1 and Conjectures 3 and
4 if $V\neq V_{k}$ for all $k\geq2$ and $V=V_{k+1}$ then $L(V)$ has no
multiple periodic eigenvalues and has only one double periodic eigenvalue
$\lambda_{2k-1}(0)=\lambda_{2k}(0)$ respectively. Therefore by Summary 4 and
Theorems 11 and 16 the spectral expansion of $L(V)$ has the form

\begin{theorem}
Suppose that Conjectures 1-4 hold and $V>1/2$. Then for each $k\geq2$ the
spectral expansion for the operator $L(V_{k+1})$ has the following form
\begin{equation}
f=\frac{1}{2\pi}\left(  \int\nolimits_{(-\pi,\pi]}\left[  a_{2k-1}%
(t)\Psi_{2k-1,t}+a_{2k}(t)\Psi_{2k,t}\right]  dt+\sum_{n\in\mathbb{N}%
\backslash\left\{  2k-1,2k\right\}  }\int\nolimits_{(-\pi,\pi]}a_{n}%
(t)\Psi_{n,t}dt\right)
\end{equation}
and
\begin{equation}
f(x)=\frac{1}{\pi}p.v.\left(  \int\limits_{\Omega_{k}}(\phi(x,\lambda
))\frac{1}{p(\lambda)}d\lambda\right)  +\frac{1}{\pi}\sum_{n\in\mathbb{N}%
\backslash\left\{  2k-1,2k\right\}  }\left(  \int\limits_{\Gamma_{k}}%
(\phi(x,\lambda))\frac{1}{p(\lambda)}d\lambda\right)  .
\end{equation}

If $V\neq V_{k}$ for all $k\geq2,$ then the spectral expansion for $L(V)$ has
the elegant forms (11) and (57).
\end{theorem}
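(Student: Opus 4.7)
The plan is to combine the structural result Theorem~11, which characterizes the ESS of $L(V)$ as the multiple periodic and antiperiodic eigenvalues, with Corollary~1, which rules out ESS at infinity for $V\neq\pm 1/2$, and with Summary~4, which converts ESS-information into a spectral expansion. Conjectures~1--4 enter only to enumerate which $2$-periodic eigenvalues of $L(V)$ are multiple; once that enumeration is in hand, the two forms $(60)$ and $(61)$ follow by the same mechanism already used in Theorems~14 and~15.

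For the noncritical case $V\neq V_k$ for every $k\geq 2$, I would collect the following. By Conjecture~3 every periodic eigenvalue is either real and simple or a member of a nonreal conjugate pair, and Conjecture~4 makes the nonreal pairs simple. By Summary~1 the antiperiodic eigenvalues are nonreal and simple on $(1/2,\sqrt{5}/2)$; by Conjecture~2 they do not return to the real line for larger $V$, and Conjecture~4 keeps them simple. Hence no $2$-periodic eigenvalue is multiple. Theorem~11 then gives $\mathbb{E}=\varnothing$ in $\sigma(L(V))$, Corollary~1 gives no ESS at infinity, and Summary~4$(a)$ yields the elegant form $(11)$. The $\lambda$-form $(57)$ is obtained by the substitution $F(\lambda)=2\cos\pi t$ along each band and the explicit description of $\Psi_{n,t}$ in terms of $\theta,\varphi$, exactly as in the derivation of Theorem~15$(a)$.

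For the critical case $V=V_{k+1}$ with $k\geq 2$, Conjecture~3 asserts that the only multiple periodic eigenvalue is the double real eigenvalue $\lambda_{2k-1}(0)=\lambda_{2k}(0)$, every other periodic eigenvalue is simple, and the antiperiodic eigenvalues are nonreal and simple by the argument of the previous paragraph. Theorem~7 yields geometric multiplicity $1$ for this double periodic eigenvalue, so Summary~5$(c)$ (used via Theorem~11) shows that $\lambda_{2k-1}(0)$ is an ESS and is the unique one. Combined with the absence of ESS at infinity from Corollary~1, Summary~4$(b)$ applies with $N(\mathbb{E})=\{2k-1,2k\}$ and produces $(60)$. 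The $\lambda$-formula $(61)$ then follows by the same change of variable along the paired bands $\Omega_k=\Gamma_{2k-1}\cup\Gamma_{2k}$, which meet at the ESS; the principal value in front of the integral over $\Omega_k$ encodes the fact that $1/d_{2k-1}$ and $1/d_{2k}$ are individually nonintegrable near $t=0$ while their sum is integrable, exactly as in Theorem~15$(b)$.

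The main obstacle is not a single analytic step but verifying that the enumeration of multiple $2$-periodic eigenvalues extracted from the conjectures is exhaustive. Collisions between a periodic and an antiperiodic eigenvalue are excluded by Summary~7$(a)$ with $t_1=0,\ t_2=1$; collisions of conjugate pairs of nonreal periodic (respectively antiperiodic) eigenvalues, which would occur if a pair re-approached the real axis, are excluded by Conjectures~2 and~4 taken together; and the rest of the periodic eigenvalues at $V_{k+1}$ are simple by the hypothesis of Conjecture~3 itself. Once these exclusions are granted, the proof is pure bookkeeping: Summary~4 delivers the spectral expansion as soon as $\mathbb{E}$ is specified, and the passage from $t$ to $\lambda$ is the routine substitution already performed in Theorems~15 and~16.
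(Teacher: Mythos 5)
Your proposal is correct and follows essentially the same route as the paper: use Conjectures 2--4 together with Summary 1 to enumerate the multiple $2$-periodic eigenvalues (none in the noncritical case, exactly the double eigenvalue $\lambda_{2k-1}(0)=\lambda_{2k}(0)$ when $V=V_{k+1}$), identify these as the ESS via Theorem 11, invoke Corollary 1 to exclude ESS at infinity, and then read off the expansions from Summary 4 and the change of variable already carried out in Theorems 14--16. The extra details you supply (Summary 7$(a)$ to exclude periodic--antiperiodic collisions, Theorem 7 for geometric multiplicity $1$) are consistent with, and slightly more explicit than, the paper's own brief justification.
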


Thus if the conjectures hold, then the spectral decomposition of the operator
$L(V_{k+1})$ has no elegant form, because the functions $a_{2k-1}%
(t)\Psi_{2k-1,t}$ and $a_{2k}(t)\Psi_{2k,t}$ has nonintegrable singularities
at $t=0$. It means that, in the spectral expansion (60) the bracket comprising
the functions with indices $2k-1$ and $2k$ corresponding to ESS $\lambda
_{2k-1}(0)=\lambda_{2k}(0)$ is necessary. \ Besides, if we consider the
spectral expansion in term of $\lambda$, then it is necessary to use the
$p.v.$ integral about ESS $\lambda_{2k}(0)$, since the integrals about this
point do not exist. That is why\ only for the integral over the component
$\Omega_{k}=\Gamma_{2k-1}\cup\Gamma_{2k}$ containing the ESS $\lambda_{2k}(0)$
we use the $p.v.$ integral (see (61)).

\end{document}